\newcolumntype{L}{>{\RaggedRight\arraybackslash}X}
\newtheorem{thm}{Theorem}[section]
\newtheorem{cor}[thm]{Corollary}
\newtheorem{lemma}[thm]{Lemma}
\newtheorem{propn}[thm]{Proposition}
\theoremstyle{definition}
\newtheorem{defn}[thm]{Definition}
\theoremstyle{remark}
\newtheorem{remark}[thm]{Remark}
\newcommand{\g}{\mathfrak{g}}
\newcommand{\R}{\mathbb{R}}
\newcommand{\C}{\mathbb{C}}
\newcommand{\SO}{\operatorname{SO}}
\newcommand{\su}{\mathfrak{su}}
\title{Hyperkähler structures on leaves of hyper-Lie Poisson manifolds}
\author{Dadi Ni}
\address{School of Mathematics and Statistics, Henan University} 
\email{\href{mailto:nidd@henu.edu.cn}{nidd@henu.edu.cn}}
\author{Kaichuan Qi}
\address{Department of Mathematics, Penn State University} 
\email{\href{mailto:kaichuan@psu.edu}{kaichuan@psu.edu}}
\thanks{Ni is  supported by the Natural Science Foundation of Henan Province
 (No. 252300421766). Qi is partially supported by the National Science Foundation (award DMS-2302447).}
\begin{document}

\maketitle
\begin{abstract}
  
Due to their rich structures and close connections with gauge theory, hyperkähler manifolds have attracted increasing interest.
Using infinite dimensional hyperkähler reduction, Kronheimer proved that certain adjoint orbits of complexified semisimple Lie algebras admit hyperkähler structures. Later on, Xu obtained a proof for the existence of  hyperkähler structures on adjoint orbits of $\mathfrak{sl}(2,\mathbb{C})$ from the viewpoint of symplectic geometry. This paper aims to thoroughly investigate and elucidate the key differences as well as the underlying connections between the two distinct construction methods.
\end{abstract}

\begin{itemize}
	\item 
	{\it Keywords:}  Poisson manifold, Hyperkähler manifold, Hyper-Lie Poisson structure, Adjoint orbit.
\end{itemize}

\tableofcontents

\section{Introduction}

\noindent $\diamond$ \textbf{Background}

The study of hyperkähler manifolds began in 1955 with Berger's classification of holonomy groups of Riemannian manifolds.  Specifically, a hyperkähler manifold is a Riemannian manifold $(M, g)$ with three complex structures $I$, $J$ and $K$ which satisfy  the quaternionic identity $IJK = -{\rm id}_{ TM}$ and $g$ is Kählerian with respect to $I,J$ and $K$.  However, unlike general Kähler manifolds, hyperkähler structures are  more rigid and difficult to construct. The main methods for constructing hyperkähler manifolds are twistor theory and hyperkähler reduction.

One of the main features of hyperkähler geometry is its deep connection with symplectic geometry. In particular, every hyperkähler manifold possesses an underlying holomorphic symplectic structure, given by  $(I,\omega_J+i\omega_K)$, where $\omega_J$ and $\omega_K$ are the Kähler forms associated with the complex structures J and K, respectively. Conversely,     Yau's solution to the Calabi conjecture \cite{Yau-CPAM} implies that compact holomorphic symplectic manifolds of Kähler type necessarily possess a hyperkähler structure. However, in the non-compact setting, no such general existence theorem exists, and the problem of constructing hyperkähler metrics on non-compact holomorphic symplectic manifolds has generated a lot of research since Calabi's first examples \cite{Calabi1979} on $T^*\mathbb{C}\mathbb{P}^n$.  

Notable examples of hyperkähler manifolds include (co)adjoint orbits of complexified real semisimple Lie algebras \cite{Kronheimer-JLMS, Kronheimer1990InstantonsAT,Kovalev1996NahmsEA,Xu}; cotangent bundles of hermitian symmetric spaces \cite{Biquard1997,Biquard1998}; the ALE spaces on resolutions of Kleinian singularities \cite{Kronheimer-ALE} later generalized to Nakajima quiver varieties \cite{Nakajima}; various moduli spaces in gauge theory, such as moduli spaces of instantons \cite{Maciocia}, monopoles \cite{Atiyah-Hitchin}, Higgs bundles \cite{Hitchin1987}. Perhaps the most general existence result is that of Feix \cite{Feix2001} and Kaledin \cite{Kaledin}, who independently showed that the cotangent bundle of any Kähler manifold has a 
hyperkähler structure on a neighborhood of its zero section. Later on, his result was generalized by Mayrand \cite{Mayrand} to neighborhood of complex Lagrangian submanifolds in holomorphic symplectic manifolds.

\noindent $\diamond$ \textbf{Motivation} 

Using gauge theory and infinite dimensional hyperkähler reduction, Kronheimer proved in \cite{Kronheimer-JLMS,Kronheimer1990InstantonsAT} that there exist hyperkähler structures on the nilpotent and regular adjoint orbits of any complexified semisimple Lie algebra. The key point is that the adjoint orbits are realized as the moduli spaces of solutions to Nahm's equations associated with a semisimple Lie algebra $\mathfrak{g}$. Solutions to Nahm's equations can be interpreted as instantons (anti-self-dual connections) on $\mathbb{R}^4\setminus \{0\}$\cite{Kronheimer1990InstantonsAT}; the hyperkähler structure is then a consequence of a rather general property of instanton moduli space. Later on, his result was generalized by Biquard and Kovalev to arbitrary adjoint orbits \cite{Kovalev1996NahmsEA, Biquard1996}, and has been used to understand the Kostant-Sekiguchi correspondence \cite{Vergne}.

By focusing on symplectic structures rather than the metrics, one obtains a new way to define the hyperkähler manifolds. A hyperkähler structure on a manifold $S$ is three symplectic structures $\omega_1$, $\omega_2$ and $\omega_3$ satisfying
\begin{itemize}
\item $[\omega_i^{\flat}\circ (\omega_j^{\flat})^{-1}]^2 = -1$, $\forall i\neq j$; 
\item The bundle map $g\colon TS\to T^*S$ given by $g=\omega_3(\omega_1)^{-1}\omega_2$ is a positive definite.
\end{itemize}
 
The above viewpoint is emphasized in Xu's work \cite{Xu}. By introducing the concept of hyper-Lie Poisson structure, Xu constructed hyperkähler manifolds related to the adjoint orbits of $\mathfrak{sl}(2,\mathbb{C})$.  For $\mathfrak{g}=\mathfrak{su}(2)$, 
Xu defined a special Poisson structure $\pi_1$ on an open subset $\mathfrak{g}^3_{\star}$ of $\mathfrak{g}^3$, such that ${\rm pr}_{12}, {\rm pr}_{13}\colon (\mathfrak{g}^3_{\star},\pi_1) \to (\mathfrak{g}^{\mathbb{C}},\pi_{\rm Lie})$ are Poisson maps, where $\pi_{\rm Lie}$ is the Lie-Poisson structure on $\mathfrak{g}^{\mathbb{C}}$ seen as a real Lie algebra. Applying the pushforward by a cyclic permutation $\sigma:~(a,b,c)\to (c,a,b)$, one can obtain the other two Poisson structures $\pi_2=\sigma_*\pi_1$ and $\pi_3=\sigma_*\pi_2$. 

Further, he proved that symplectic leaves of the three Poisson structures coincide, and every leaf admits three symplectic structures with compatibility conditions, inducing (pseudo-)hyperkähler structures. Finally, all leaves of $\pi_1, \pi_2, \pi_3$ can be regarded as (possibly submanifolds of) certain adjoint orbits of $\su(2)^{\mathbb{C}}\cong \mathfrak{sl}(2,\mathbb{C})$ under the projection ${\rm pr}_{23}$.

Since both constructions of Kronheimer and Xu yield hyperkähler structures on the adjoint orbits of $\mathfrak{sl}(2,\mathbb{C}),$ we ask two natural questions, which we would like to answer in this paper.
    \begin{itemize}
        \item Do some leaves of the hyper-Lie Poisson structure recover Kronheimer's construction?
        \item If so, do other leaves give rise to different hyperkähler structures?
    \end{itemize}  

\noindent $\diamond$ \textbf{The main results} 

The outline of this paper is as follows: in Section 2, we review the construction of the hyper-Lie Poisson structures on $\mathfrak{su}(2)^3_{\star}$, following \cite{Xu}. Then we give a simple description of its symplectic leaves in Section 3. For any $q,r \geq 0,$ denote by $N_{q,r}$ the symplectic leaf
\begin{equation*}
\begin{aligned}
\{(a_1,a_2,a_3) \in \su(2)^3 |& \langle a_1,a_1\rangle - \langle a_2,a_2\rangle = q, \langle a_2,a_2\rangle - \langle a_3,a_3\rangle = r,\\
&\forall i\neq j: \langle a_i,a_j \rangle = 0,  \langle a_1, [a_2, a_3]\rangle < 0 \}.
\end{aligned} 
\end{equation*}
We  demonstrate that all symplectic leaves of the hyper-Lie Poisson manifold $(\su(2)^3_{\star}; \pi_1,\pi_2,\pi_3)$  are diffeomorphic as smooth manifolds. For any leaf $L$, there exist $q,r$ and an (anti)-isomorphism between $L$ and $N_{q,r}$, as (pseudo-)hyperkähler structures.

In Section 4, we classify these leaves into three types: type-0 ($N_{0,0}$), type-1 ($N_{q,0}$ for $q>0$), and type-2 ($N_{q,r}$ for $r>0$). 
By computing their sectional curvatures, we prove the following:
\begin{thm}
The hyperkähler metric on symplectic leaves of the hyper-Lie Poisson manifold $(\su(2)^3_{\star}; \pi_1,\pi_2,\pi_3)$ of different types  cannot be  isomorphic as hyperkähler manifolds. Moreover, for any $t>0$,  the diffeomorphism
$
\rho_{t}\colon N_{q,r}\rightarrow N_{qt,rt},
(a_1,a_2,a_3)\mapsto (\sqrt{t}a_1,\sqrt{t}a_2,\sqrt{t}a_3),
$
intertwines the three complex structures, and satisfies  $\rho^*_t (g_{qt,rt})=\sqrt{t}\cdot g_{q,r},$ 
where $g_{qt,rt}$, $g_{q,r}$ are the induced metrics on $N_{qt,rt}$, $N_{q,r}$, respectively. 
\end{thm}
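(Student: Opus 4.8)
The plan is to establish the two assertions independently, beginning with the scaling diffeomorphism $\rho_t$, which is essentially a computation. First I would observe that $\rho_t$ is the restriction to $N_{q,r}$ of the ambient linear dilation $\Phi_t\colon\su(2)^3\to\su(2)^3$, $(a_1,a_2,a_3)\mapsto(\sqrt t\,a_1,\sqrt t\,a_2,\sqrt t\,a_3)$. Since $\langle\sqrt t\,a_i,\sqrt t\,a_j\rangle=t\,\langle a_i,a_j\rangle$ and $\langle\sqrt t\,a_1,[\sqrt t\,a_2,\sqrt t\,a_3]\rangle=t^{3/2}\langle a_1,[a_2,a_3]\rangle$, the dilation $\Phi_t$ preserves $\su(2)^3_\star$ and carries the level sets defining $N_{q,r}$ onto those defining $N_{qt,rt}$ for every $q,r\geq 0$, so $\rho_t$ is a diffeomorphism with inverse $\rho_{1/t}$. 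The substance of this part is to track the three leafwise symplectic forms: using the coordinate expression for $\pi_1$ recalled in Section~2, one checks that $(\Phi_t)_*\pi_1$ is a constant multiple of $\pi_1$, and because $\Phi_t$ commutes with the cyclic permutation $\sigma$ while $\pi_2=\sigma_*\pi_1$ and $\pi_3=\sigma_*\pi_2$, the \emph{same} constant occurs for all three; equivalently $\rho_t^*\omega_i=\sqrt t\,\omega_i$ for $i=1,2,3$. From this both conclusions follow formally: each complex structure has the shape $\omega_i^{\flat}\circ(\omega_j^{\flat})^{-1}$, a ratio in which the common factor cancels, so $\rho_t$ intertwines the three complex structures; and $g=\omega_3(\omega_1)^{-1}\omega_2$ carries net weight one in that factor, whence $\rho_t^*g_{qt,rt}=\sqrt t\cdot g_{q,r}$.

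For the non-isomorphism of distinct types, the key point is that an isomorphism of hyperkähler manifolds is in particular a Riemannian isometry, hence preserves every intrinsic curvature quantity. By the sectional-curvature computations of Section~4, the type-$0$ leaf $N_{0,0}$ is flat whereas the type-$1$ and type-$2$ leaves are not, which already separates type~$0$ from the other two. To separate types~$1$ and~$2$ I would extract from those same computations a scale-stable invariant of the sectional-curvature function — for instance whether the sectional curvature is bounded, or more finely the precise set of values it attains — and verify that this invariant differs between the two types and is unchanged under the homothety $\rho_t$, which merely multiplies curvature by a positive constant and so cannot reconcile an absolute discrepancy. Connectedness of the leaves, established in Section~3, is used here so that statements like ``flat'' or ``curvature unbounded'' refer to the whole manifold; combining these, no leaf of one type is isometric — a fortiori hyperkähler-isomorphic — to a leaf of another type.

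The step I expect to be the main obstacle is the curvature side of the second part: although the argument conceptually reduces to ``isometries preserve curvature,'' one must actually carry out the explicit sectional-curvature computations for all three types and then isolate a feature that distinguishes types~$1$ and~$2$ \emph{absolutely}, not merely up to rescaling — precisely the place where the homothety $\rho_t$ threatens to collapse the distinction, so that checking robustness under $\rho_t$ is the crux. On the first part the only delicate point is the identity $\rho_t^*\omega_i=\sqrt t\,\omega_i$: since $\pi_1$ is defined only on the open set $\su(2)^3_\star$ and its coefficients are rational rather than polynomial, one must verify that numerator and denominator scale compatibly so that the net homogeneity yields exactly the exponent $1/2$.
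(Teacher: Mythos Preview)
Your proposal is correct and matches the paper's approach closely. For the scaling part the paper gives no details beyond ``by straightforward computation,'' so your outline via the homogeneity of $\pi_1$ (checking that the linear brackets and the $A$-term both scale as $t^{1/2}$, hence $(\Phi_t)_*\pi_i=\sqrt{t}\,\pi_i$ and $\rho_t^*\omega_i=\sqrt{t}\,\omega_i$) is if anything more explicit than what the paper records; the alternative direct check via the global frame $\{V_i\}$ and $g(V_i,V_j)=-\delta_{ij}\Phi$ gives the same result. For the non-isomorphism part your plan is exactly the paper's: Theorem~4.1 shows that the type-$0$ leaf is flat, every type-$1$ leaf has nonconstant \emph{bounded} sectional curvature, and every type-$2$ leaf has \emph{unbounded} sectional curvature, so the invariant you single out (boundedness of $\kappa$) is precisely the one the paper uses. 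Your remark about robustness under $\rho_t$ is harmless but unnecessary here: an isomorphism of hyperk\"ahler structures is a genuine isometry, so it preserves $\kappa$ on the nose, and bounded versus unbounded is already an isometry invariant without any appeal to scale-stability.
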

\noindent Note that the relation between two  type-2 leaves $N_{q,r}$ and $N_{q',r'}$, where  $(q,r)\nparallel(q',r')$, is unclear to the authors and is identified as future work.

Finally, in Section 5, we relate leaves of $\mathfrak{su}(2)^3_{\star}$ with Kronheimer's moduli space of solutions to Nahm's equation \cite{Kronheimer-JLMS, Kronheimer1990InstantonsAT}. Specifically, we establish that the type-0 leaf $N_{0,0}$ is isomorphic to the hyperkähler structure constructed by Kronheimer on the nilpotent orbit of $\mathfrak{sl}(2,\mathbb{C})$. Similarly, the extended type-1 leaves $\overline{N}_{q,0}$  are isomorphic to the hyperkähler structures constructed by Kronheimer on the regular orbits of $\mathfrak{sl}(2,\mathbb{C})$. 
Hence, in the case of $\g = \su(2),$ the hyperkähler structures on Kronheimer's moduli space are recovered by the type-0 and 1 symplectic leaves of the hyper-Lie Poisson structures. 
However, the type-2 leaf $N_{q,r}$  cannot be obtained through Kronheimer's construction, marking a significant distinction between Xu's work and Kronheimer's.

\noindent $\diamond$ \textbf{ Future work} 




One could study the family of type-2 leaves, and give a classification of them as hyperkähler manifolds. Moreover, one can investigate further Poisson-geometric properties of the hyper-Lie Poisson structure on $\mathfrak{su}(2)^3_{\star}$, e.g. its symplectic realisation.

Finally, since Kronheimer's moduli space construction works for any real semisimple Lie algebra, it would be interesting to see whether in those cases hyper-Lie Poisson structures still exist.

\section{Hyper-Lie Poisson structures}\label{Sec:hyper-Lie-Poisson}
In this section we review notations related to hyperkähler manifolds and the construction of hyper-Lie Poisson structures, which are preliminaries for the later sections.
\subsection{Hyperkähler manifolds}
Let $M$ be a smooth manifold. A tuple $(g,I,J,K)$ is a hyperkähler structure on $M$, if $g$ is a Riemannian metric and $I, J, K$ are three  complex structures on $M$ such that $g$ is Kählerian with respect to $I,J$ and $K$, and that $IJK = -{\rm id}_{ TM}$. Therefore, $M$ must be a $4n$-dimensional manifold. 

Now we recall an equivalent definition. Consider the three Kähler forms 
$\omega_I,\omega_J$ and $\omega_K$ of a hyperkähler structure $(g,I,J,K)$. One checks by definition that the Kähler forms determines $g,$ and therefore, hyperkähler structures can be studied by using the symplectic forms. 

\begin{defn}\label{Def:hypersymplectic}
A \textbf{hyperkähler structure} on a manifold $M$ is three symplectic forms $\omega_1$, $\omega_2$ and $\omega_3$ on $M$ satisfying
\begin{itemize}
\item[(1)] $[(\omega_i^{\flat})^{-1}\circ \omega_j^{\flat}]^2 = -{\rm id}_{TM}$, $\forall i\neq j$; 
\item[(2)] the bundle map $g^{\flat} \colon TM\to T^*M$ given by $g^{\flat}=\omega^{\flat}_3(\omega^{\flat}_1)^{-1}\omega^{\flat}_2$ is  positive definite.
\end{itemize}
\end{defn}

Here, we use $\omega_i^{\flat}:TM \rightarrow T^*M$ to denote the bundle map determined by $\langle \omega_i^{\flat}(X),Y\rangle = \omega_i(X,Y)$, for any $X,Y \in T_mM$ and $m\in M$. Similarly, let $\pi$ be a Poisson bivector field. We use $\pi^{\sharp}:T^*M \rightarrow TM$ to denote the bundle map determined by $\langle \pi^{\flat}(\alpha),\beta\rangle = \pi(\alpha,\beta)$, for any $\alpha,\beta \in T^*_mM$ and $m\in M$.

Note that from any hyperkähler structure $(\omega_1, \omega_2,\omega_3)$, we obtain an equivalent description $(g,I,J,K)$ where $g$ is from condition (2), and the complex structures are given by $(\omega_i^{\flat})^{-1}\circ \omega_j^{\flat}$. If only condition (1) in the above definition is satisfied, Xu refers to $(M,\omega_1,\omega_2,\omega_3)$ as a \textbf{hypersymplectic structure}, which is also referred to as a pseudo-hyperkähler structure in \cite{Gteman2009PseudoHyperkhlerGA}.

To compare different constructions for hyperkähler structures, we establish the notion of isomorphism. The idea is that we allow for a rotation (action of $\SO(3)$) on the three symplectic forms.

\begin{defn}
Let $(M,\omega_1,\omega_2,\omega_3)$ and $(\tilde{M}, \tilde{\omega}_1,\tilde{\omega}_2,\tilde{\omega}_3)$ be two hyperkähler manifolds. A diffeomorphism ${F}\colon\tilde{M} \rightarrow M$  is called an \textbf{isomorphism of hyperkähler structures} if
\begin{equation}
F^*(\omega_1,\omega_2,\omega_3)=(\tilde{\omega}_1,\tilde{\omega}_2,\tilde{\omega}_3)O
\end{equation}
for some $O\in \SO(3).$    
\end{defn}
If we consider hypersymplectic structures on $M$ and $\tilde{M}$, then we say a diffeomorphism ${F}\colon\tilde{M} \rightarrow M$ is an \textbf{anti-isomorphism}, if equation (1) is valid for some $O^{\prime} \in \rm{O}(3)$ with $\rm{det}(O^{\prime})=-1.$ It is sometimes convenient to phrase the notion of isomorphism in terms of the Riemannian metric and complex structures. 

\begin{propn}
Equivalently, an isomorphism of hyperkähler structures $(M,g,I,J,K)$ and $(\tilde{M},\tilde{g},\tilde{I},\tilde{J},\tilde{K})$ means that: there exists an isometry ${F}\colon(\tilde{M},\tilde{g}) \rightarrow (M,g)$ such that 
\begin{equation}
(I {F}_*,J {F}_*,K {F}_*)=({F}_*\tilde{I},{F}_*\tilde{J},{F}_*\tilde{K})O
\end{equation}
for some $O\in\SO(3)$.
In particular, isomorphic hyperkähler structures correspond to the same Riemannian metric. 
\end{propn}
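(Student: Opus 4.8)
The plan is to reduce the statement to the pointwise linear algebra of a single hyperkähler vector space, plus two elementary pullback identities for a diffeomorphism $F\colon\tilde M\to M$. First I would record: (i) for a bundle bilinear form $h$ on $M$, one has $(F^{*}h)^{\flat}=F^{*}\circ h^{\flat}\circ F_{*}$, where the outer $F^{*}\colon T^{*}M\to T^{*}\tilde M$ is the transpose of $F_{*}$; (ii) for a bundle endomorphism $A$ of $TM$, writing $F^{*}A:=F_{*}^{-1}\circ A\circ F_{*}$, the pullback of the $2$-form $(X,Y)\mapsto h(AX,Y)$ equals $(X,Y)\mapsto(F^{*}h)\bigl((F^{*}A)X,Y\bigr)$. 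Writing $\omega_{1}=\omega_{I}$, $\omega_{2}=\omega_{J}$, $\omega_{3}=\omega_{K}$ and $I_{1}=I$, $I_{2}=J$, $I_{3}=K$, identity (ii) says exactly that $F^{*}\omega_{j}$ is the Kähler form of the pair $(F^{*}g,\,F^{*}I_{j})$. Combining (i) and (ii): applying the reconstruction formulas $\omega_{3}^{\flat}(\omega_{1}^{\flat})^{-1}\omega_{2}^{\flat}$ and $(\omega_{i}^{\flat})^{-1}\omega_{j}^{\flat}$ to the pulled-back triple $(F^{*}\omega_{1},F^{*}\omega_{2},F^{*}\omega_{3})$ yields $(F^{*}g)^{\flat}$ and $F^{*}\bigl((\omega_{i}^{\flat})^{-1}\omega_{j}^{\flat}\bigr)$ respectively.

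For the direction ``isometry $\Rightarrow$ isomorphism of hyperkähler structures'', assume $F^{*}g=\tilde g$ and $(IF_{*},JF_{*},KF_{*})=(F_{*}\tilde I,F_{*}\tilde J,F_{*}\tilde K)O$, i.e. $F^{*}I_{j}=\sum_{i}\tilde I_{i}O_{ij}$ for all $j$. By (ii) and the linearity of the Kähler form in the complex structure, $F^{*}\omega_{j}$ is the Kähler form of $\bigl(\tilde g,\sum_{i}\tilde I_{i}O_{ij}\bigr)$, hence equals $\sum_{i}\tilde\omega_{i}O_{ij}$; this is exactly $F^{*}(\omega_{1},\omega_{2},\omega_{3})=(\tilde\omega_{1},\tilde\omega_{2},\tilde\omega_{3})O$, the defining condition of an isomorphism.

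For the converse, suppose $F^{*}(\omega_{1},\omega_{2},\omega_{3})=(\tilde\omega_{1},\tilde\omega_{2},\tilde\omega_{3})O$. The essential input is a pointwise fact on a hyperkähler vector space: if $(\omega_{1}',\omega_{2}',\omega_{3}')=(\omega_{1},\omega_{2},\omega_{3})O$ with $O\in\SO(3)$, then $(\omega_{3}')^{\flat}\bigl((\omega_{1}')^{\flat}\bigr)^{-1}(\omega_{2}')^{\flat}=\omega_{3}^{\flat}(\omega_{1}^{\flat})^{-1}\omega_{2}^{\flat}$, and the induced complex structure triple $(I,J,K)$ is replaced by $(I,J,K)O$. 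Granting this, apply the last sentence of the first paragraph: from $F^{*}\omega_{j}=\sum_{i}\tilde\omega_{i}O_{ij}$ and identity (i) we obtain $(F^{*}g)^{\flat}=\tilde g^{\flat}$, so $F$ is an isometry; from identity (ii) we obtain $F^{*}(I,J,K)=(\tilde I,\tilde J,\tilde K)O$, which rearranges to $(IF_{*},JF_{*},KF_{*})=(F_{*}\tilde I,F_{*}\tilde J,F_{*}\tilde K)O$. The closing ``in particular'' clause follows at once, $F$ being an isometry of $(\tilde M,\tilde g)$ onto $(M,g)$.

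The only step that is not pure bookkeeping is the pointwise $\SO(3)$-invariance of the reconstruction, which I would prove on a hyperkähler vector space by checking a generating set of $\SO(3)$. Writing $\omega_{i}^{\flat}=g^{\flat}I_{i}$ for a quaternionic triple and using $I_{i}^{-1}=-I_{i}$, $I_{1}I_{2}=I_{3}$, etc., one has $\omega_{3}^{\flat}(\omega_{1}^{\flat})^{-1}\omega_{2}^{\flat}=g^{\flat}I_{3}(-I_{1})I_{2}=g^{\flat}$. What must be verified is that the rotated triple $\bigl(\sum_{j}I_{j}O_{j1},\sum_{j}I_{j}O_{j2},\sum_{j}I_{j}O_{j3}\bigr)$ is again a quaternionic triple with the same orientation, whence the same computation reproduces $g^{\flat}$ and the induced $(I,J,K)$ rotates by $O$; this is precisely the statement that $\operatorname{Im}\mathbb{H}\cong\so(3)$ as Euclidean Lie algebras on which $\SO(3)$ acts by automorphisms, and it is enough to check it for a rotation fixing one of the $I_{i}$ and for a cyclic permutation of the three, since these generate $\SO(3)$. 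I expect this quaternionic sign-chasing — in particular keeping the conventions behind $(\omega_{i}^{\flat})^{-1}\omega_{j}^{\flat}$ and $\omega_{I}(X,Y)=g(IX,Y)$ coherent — to be the main, if minor, obstacle.
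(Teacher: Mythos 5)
Your argument is correct, and it is exactly the natural fleshing-out of what the paper leaves implicit: the paper states this proposition without proof, relying on the preceding remark that the triple $(\omega_1,\omega_2,\omega_3)$ determines $(g,I,J,K)$ via $g^{\flat}=\omega_3^{\flat}(\omega_1^{\flat})^{-1}\omega_2^{\flat}$ and $(\omega_i^{\flat})^{-1}\omega_j^{\flat}$, which is precisely the reconstruction you exploit. Your two pullback identities together with the pointwise $\SO(3)$-equivariance of that reconstruction (the rotated triple of complex structures is again a correctly oriented quaternionic triple, so the metric is unchanged and the complex structures rotate by $O$) give both directions, with only the sign-convention bookkeeping you already flagged left to check.
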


\begin{remark}
In the case of hypersymplectic structures, we are considering pseudo-Riemannian metrics. Then an anti-isomorphism means a diffeomorphism that preserves the metrics up to a minus sign, and intertwines the complex structures up to $O\in \SO(3).$
\end{remark}

\subsection{General theory of hyper-Lie Poisson structures}
The notion of hyper-Lie Poisson structures is a generalization of Lie-Poisson structures in the ``hyper" context, and was initially introduced by Xu in \cite{Xu}. The crux of this concept lies in constructing three compatible Poisson structures $(\pi_1,\pi_2,\pi_3)$ that share the same symplectic leaves. Consequently, on each leaf, we obtain hypersymplectic structures.

Let $\g$ be a semisimple Lie algebra and  $\langle-,-\rangle$  the Killing form on $\g.$
We shall define the Poisson structures on $\g^3=:\g\times\g\times\g$.  For any $\xi \in \g$, let $l^j_{\xi}$ be the function on $\g^3$ given by
$l^j_{\xi}(a_1,a_2,a_3) = \langle \xi,a_j \rangle$.

Let $A$ be a function $\operatorname{SO(3)}$-invariant and $S^2(\g)$-valued on $\g^3$, where $S^2(\g)$ denotes the space of second order symmetric tensors on $\g$. When contracting with $\xi,\eta\in\g$, one gets a function $A_{\xi,\eta}:=\xi\lrcorner A\lrcorner \eta$ on $\g^3$, and the $\SO(3)$-invariance means that $A_{\xi,\eta}(a_1,a_2,a_3) = A_{\xi,\eta}((a_1,a_2,a_3)O)$, for any $O\in \SO(3)$ and $(a_1,a_2,a_3) \in \mathfrak{g}^3.$ 

The following brackets define a bivector field $\pi_1$ on $\g^3$, $\forall \xi,\zeta \in \mathfrak{g},$
\begin{equation}\label{Eqt:brackets}
    \begin{aligned}
        \{l^1_{\xi},l^1_{\zeta}\}=l^1_{[\xi,\zeta]}, \hspace{1em} &\{l^1_{\xi},l^2_{\zeta}\}=\{l^2_{\xi},l^1_{\zeta}\}=l^2_{[\xi,\zeta]},\\
        \{l^2_{\xi},l^2_{\zeta}\}=-l^1_{[\xi,\zeta]}, \hspace{1em} &\{l^1_{\xi},l^3_{\zeta}\}=\{l^3_{\xi},l^1_{\zeta}\}=l^3_{[\xi,\zeta]},\\
        \{l^3_{\xi},l^3_{\zeta}\}=-l^1_{[\xi,\zeta]}, \hspace{1em} &\{l^2_{\xi},l^3_{\zeta}\}=-\{l^3_{\xi},l^2_{\zeta}\}=A_{\xi,\zeta}.
    \end{aligned}
\end{equation}
Applying pushforward by cyclic permutation $\sigma\colon(a_1,a_2,a_3)\to (a_3,a_1,a_2)$, one can obtain the other two bivector fields $\pi_2=\sigma_*\pi_1$ and $\pi_3=\sigma_*\pi_2$ on $\g^3$.

Furthermore, if we impose additional technical conditions on $A$, as outlined in \cite{Xu},  such that
\begin{itemize}
    \item $\pi_1$ is a Poisson tensor,
    \item the symplectic foliation of $\pi_1$, $\pi_2$ and $\pi_3$ coincides,
    \item any symplectic leaf $L$ is a hypersymplectic manifold,
\end{itemize} 
then we say $(\pi_1,\pi_2,\pi_3)$ is a \textbf{hyper-Lie Poisson structure} on $\g^3$. By definition, the symplectic foliations of $\pi_1, \pi_2, \pi_3$ coincide; and we call this common foliation the \textbf{hypersymplectic foliation}, since each leaf carries the induced hypersymplectic structure.

\begin{remark}
In the definition of $\pi_1$, the first bracket aligns with the Lie-Poisson structure on $\g,$, while the first three brackets correspond to the Lie-Poisson structure on $\g^{\mathbb{C}}$, viewed as a real Lie algebra. This rationale substantiates the terminology ``hyper-Lie Poisson structures". However, in contrast to the Lie-Poisson structure, $\pi_1$  exhibits nonlinearity due to the presence of $A$.
\end{remark}

For any $O\in \operatorname{O}(3)$, we define
\begin{equation}\label{Eqt:DefsigmaO}
\begin{aligned}
 \sigma_O \colon \mathfrak{g}^3 &\rightarrow \mathfrak{g}^3,\\
( a_1, a_2, a_3)
&\mapsto
( a_1, a_2, a_3)O.
\end{aligned}
\end{equation}
 Let $(\pi_1,\pi_2,\pi_3)$ be a hyper-Lie Poisson structure on $\mathfrak{g}^3.$ Xu proved the following result by tedious computation.
 
\begin{lemma}[\cite{Xu}, Theorem 3.5]\label{lem:sym-Poisson}
For any $O\in \operatorname{O}(3)$, we have that $${\sigma_O}_*(\pi_1, \pi_2, \pi_3) = (\pi_1, \pi_2, \pi_3)O.$$
\end{lemma}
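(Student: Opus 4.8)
The plan is to exploit the fact that $\operatorname{O}(3)$ is generated by $\SO(3)$ together with any fixed reflection, and to establish the identity ${\sigma_O}_*(\pi_1,\pi_2,\pi_3)=(\pi_1,\pi_2,\pi_3)O$ on these generators separately. For $O\in\SO(3)$ I would first reduce to the case where $O$ is a rotation, and here the key observation is that $\SO(3)$ is generated by the cyclic permutation $\sigma$ (which corresponds to the permutation matrix in $\SO(3)$) together with the rotations fixing one axis. The defining relations $\pi_2=\sigma_*\pi_1$ and $\pi_3=\sigma_*\pi_2$ immediately give the statement for $O$ equal to the cyclic permutation matrix. So the crux is to handle rotations $\sigma_O$ with $O$ in a one-parameter subgroup, say fixing the first coordinate axis; here one writes out $\sigma_O$ explicitly on $l^j_\xi$, namely $\sigma_O^*(l^1_\xi)=l^1_\xi$ and $\sigma_O^*(l^2_\xi), \sigma_O^*(l^3_\xi)$ are linear combinations of $l^2_\xi$ and $l^3_\xi$ with coefficients $\cos\theta,\sin\theta$, and checks directly against the bracket table \eqref{Eqt:brackets} that the pushforward bivector has the predicted brackets. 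The $\SO(2)$-invariance of the brackets $\{l^2_\xi,l^2_\zeta\}=\{l^3_\xi,l^3_\zeta\}=-l^1_{[\xi,\zeta]}$, the fact that $\{l^2_\xi,l^3_\zeta\}=A_{\xi,\zeta}$ is $\SO(3)$-invariant by hypothesis, and the symmetry of the cross-term structure, are exactly what make this computation go through.

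For the reflection part, I would pick the reflection $O_0$ that negates the third coordinate, $(a_1,a_2,a_3)\mapsto(a_1,a_2,-a_3)$, and verify ${\sigma_{O_0}}_*\pi_1 = \pi_1$ (the first column of $O_0$, acting on the target triple, leaves $\pi_1$ fixed since $O_0$ acts trivially on the $a_1,a_2$ slots and the brackets involving $a_3$ come in pairs that each pick up a single sign: $\{l^1_\xi,l^3_\zeta\}\mapsto -l^3_{[\xi,\zeta]}$, $\{l^2_\xi,l^3_\zeta\}\mapsto -A_{\xi,\zeta}$, while $\{l^3_\xi,l^3_\zeta\}=-l^1_{[\xi,\zeta]}$ is unchanged, matching multiplication by $O_0$ on the right). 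Then I would compute ${\sigma_{O_0}}_*\pi_2$ and ${\sigma_{O_0}}_*\pi_3$ either directly from their definitions as $\sigma_*$-images, using that $\sigma$ and $\sigma_{O_0}$ have an explicit commutation relation in $\operatorname{O}(3)$ (namely $\sigma_{O_0}\circ\sigma = \sigma_{\sigma^{-1}O_0\sigma}\circ\sigma_{O_0}$ as diffeomorphisms, reflecting the group law), or more cleanly by writing an arbitrary $O\in\operatorname{O}(3)$ as $O=O_0^{\epsilon} O'$ with $O'\in\SO(3)$, using functoriality ${\sigma_{O_1 O_2}}_* = {\sigma_{O_1}}_*\circ{\sigma_{O_2}}_*$ (since $\sigma_{O_1 O_2}=\sigma_{O_1}\circ\sigma_{O_2}$), and combining the $\SO(3)$ case with the single reflection case.

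The main obstacle I anticipate is the bookkeeping in the $\SO(2)$-rotation step: one must carefully track how a bivector pushes forward under a linear-but-coordinate-mixing map and confirm that the resulting brackets $\{\sigma_O^*l^i_\xi,\sigma_O^*l^j_\zeta\}$ reassemble, after applying $(\sigma_O)_*$, into exactly the entries of $(\pi_1,\pi_2,\pi_3)O$. This is precisely the "tedious computation" the excerpt attributes to Xu. A cleaner route that avoids most of it: observe that both sides of the claimed identity are $\operatorname{O}(3)$-equivariant in a suitable sense, reduce the whole statement to checking it on a set of generators of $\operatorname{O}(3)$ (the cyclic permutation $\sigma$ and one reflection), note that the case of $\sigma$ is the definition of $\pi_2,\pi_3$, and that the reflection case is the short sign-chase described above; functoriality of $O\mapsto{\sigma_O}_*$ then propagates the identity to all of $\operatorname{O}(3)$. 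I would present the proof in this generators-and-functoriality form, relegating the one genuine bracket verification (for the reflection, and if needed for one axis-rotation) to a displayed computation.
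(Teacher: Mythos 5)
First, note that the paper itself gives no proof of this lemma: it is quoted from \cite{Xu} with the remark that Xu established it ``by tedious computation,'' so your sketch can only be judged on its own merits. Your generators-and-functoriality skeleton is a reasonable alternative to a brute-force check, but as written it has a genuine gap in the orientation-reversing case. For the reflection $\sigma_{O_0}\colon (a_1,a_2,a_3)\mapsto (a_1,a_2,-a_3)$, your sign-chase needs to know how $A$ transforms under $O_0$, and the stated hypothesis --- $\SO(3)$-invariance of $A$ --- says nothing about $A_{\xi,\zeta}(aO_0)$. The bracket $\{l^2_\xi,l^3_\zeta\}=A_{\xi,\zeta}$ picks up a sign from $l^3\mapsto -l^3$, and this is cancelled only if $A(aO)=\det(O)\,A(a)$ for orientation-reversing $O$; were $A$ invariant under all of $\operatorname{O}(3)$, one would get ${\sigma_{O_0}}_*\pi_1\neq\pi_1$ and the $\det=-1$ half of the lemma would be false. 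So you must either add this $\det$-twisted equivariance as a hypothesis or verify it for the concrete $A$ on $\su(2)^3_{\star}$ (it does hold there: the numerator $[a_1,a_2]\otimes[a_1,a_2]+[a_2,a_3]\otimes[a_2,a_3]+[a_3,a_1]\otimes[a_3,a_1]$ is $\operatorname{O}(3)$-invariant while $\Phi(aO)=\det(O)\Phi(a)$); your proposal never performs this step, and without it the reflection case does not go through.

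There is also a bookkeeping problem in the propagation step. With $\sigma_O(a)=aO$ one has $\sigma_{O_1}\circ\sigma_{O_2}=\sigma_{O_2O_1}$, i.e.\ $O\mapsto\sigma_O$ is an anti-homomorphism, not $\sigma_{O_1O_2}=\sigma_{O_1}\circ\sigma_{O_2}$ as you claim. Consequently an identity of the form ${\sigma_O}_*(\pi_1,\pi_2,\pi_3)=(\pi_1,\pi_2,\pi_3)F(O)$ is stable under composition only if $F$ is an anti-homomorphism, i.e.\ $F(O)=O^{T}=O^{-1}$, and your own base case already exhibits this: the defining relations $\pi_2=\sigma_*\pi_1$, $\pi_3=\sigma_*\pi_2$ give $\sigma_*(\pi_1,\pi_2,\pi_3)=(\pi_2,\pi_3,\pi_1)=(\pi_1,\pi_2,\pi_3)P^{T}$, where $P$ is the permutation matrix with $\sigma(a)=aP$, not $(\pi_1,\pi_2,\pi_3)P$. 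So the two assertions you make --- functoriality in the form stated and an ``immediate'' match for the cyclic permutation --- cannot both survive a careful check; carried out correctly, your scheme proves the transposed form of the identity (which is what the proof of Theorem \ref{thm:o3sym-gen} actually uses, via $O^{-1}$), and this convention mismatch should be fixed rather than glossed over. With those two repairs --- the $\det$-equivariance of $A$ and the corrected right-action bookkeeping --- the generators argument (cyclic permutation, one axis of rotations, one reflection) is a legitimate and less computational route than the direct verification attributed to Xu.
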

 
Moreover, we have the following result.
\begin{thm}\label{thm:o3sym-gen}
Let $L$ be any symplectic leaf of $\pi_1,$ then $\sigma_O(L)$ is also a symplectic leaf, and $\sigma_O$ is a diffeomorphism between them. Moreover,
\begin{itemize}
\item if $\rm det(O)=1,$ then $\sigma_O$ restricts to an isomorphism of hypersymplectic structures,
\item if $\rm det(O)=-1,$ then $\sigma_O$ restricts to an anti-isomorphism of hypersymplectic structures.
\end{itemize}

\end{thm}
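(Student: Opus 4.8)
The plan is to deduce everything from Lemma \ref{lem:sym-Poisson}, which already packages the ``tedious computation'' of Xu. First I would show that $\sigma_O$ maps symplectic leaves to symplectic leaves. Since symplectic leaves of $\pi_1$ are exactly the accessible sets of the distribution $\pi_1^\sharp(T^*\g^3)$, and a diffeomorphism carries this distribution for $\pi_1$ to the distribution for $(\sigma_O)_*\pi_1$, it suffices to know that $(\sigma_O)_*\pi_1$ has the same symplectic foliation as $\pi_1$. By Lemma \ref{lem:sym-Poisson}, $(\sigma_O)_*(\pi_1,\pi_2,\pi_3) = (\pi_1,\pi_2,\pi_3)O$, so each $(\sigma_O)_*\pi_i$ is an $\R$-linear combination of $\pi_1,\pi_2,\pi_3$. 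Because $(\pi_1,\pi_2,\pi_3)$ is a hyper-Lie Poisson structure, the three $\pi_i$ share a common symplectic foliation; hence any nontrivial linear combination has the same characteristic distribution (the span $\pi_1^\sharp + \pi_2^\sharp + \pi_3^\sharp$ is, leafwise, just the tangent space of the common leaf, and each $\pi_i^\sharp$ already surjects onto it since each $\pi_i$ is symplectic on the leaf). Therefore $(\sigma_O)_*\pi_1$ has the same leaves as $\pi_1$, and consequently $\sigma_O$ sends each leaf $L$ of $\pi_1$ diffeomorphically onto another leaf $\sigma_O(L)$.

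Next I would pin down how the symplectic forms transform. On a leaf $L$, write $\omega_i$ for the symplectic form induced by $\pi_i$, and similarly $\tilde\omega_i$ on $\sigma_O(L)$. The identity $(\sigma_O)_*\pi_i = \sum_j O_{ji}\,\pi_j$ dualizes, on the leaves, to $\sigma_O^*\tilde\omega_j$ expressed through the $\omega_i$ via the inverse/transpose of $O$; since $O\in\Or(3)$ is orthogonal, $O^{-1}=O^{\mathsf T}$, and one gets precisely $\sigma_O^*(\tilde\omega_1,\tilde\omega_2,\tilde\omega_3) = (\omega_1,\omega_2,\omega_3)O$. This is exactly the defining relation for an isomorphism of hypersymplectic structures when $\det O = 1$, and for an anti-isomorphism when $\det O = -1$, so the two bulleted conclusions follow immediately. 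The one careful point here is bookkeeping the transpose-versus-inverse when passing from bivectors to $2$-forms; I would do this by evaluating both sides on leafwise tangent vectors, using $\omega_i^\flat = -(\pi_i^\sharp)^{-1}$ on the leaf, and the fact that pushing forward a bivector by $\sigma_O$ corresponds to pulling back the associated form by $\sigma_O$.

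The main obstacle, such as it is, is the first step: verifying that a generic real-linear combination $\sum_i c_i \pi_i$ of the three Poisson tensors has the \emph{same} symplectic leaves, not merely the same characteristic distribution pointwise. Pointwise equality of distributions is clear from the hyperkähler condition (1) in Definition \ref{Def:hypersymplectic}, since leafwise each $\omega_i^\flat$ is invertible and they pairwise satisfy the quaternionic relations, forcing $\pi_1^\sharp, \pi_2^\sharp,\pi_3^\sharp$ to have identical image. Upgrading ``same distribution'' to ``same leaves'' is automatic because leaves are the maximal integral manifolds of an (automatically integrable) distribution, so no extra argument is needed once the distributions agree. Hence the proof is essentially a formal consequence of Lemma \ref{lem:sym-Poisson} together with the leafwise nondegeneracy built into the definition of a hyper-Lie Poisson structure; I would present it in that order — leaves preserved, then forms transform by $O$, then read off (anti-)isomorphism from the sign of $\det O$.
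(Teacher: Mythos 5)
Your proposal is correct and takes essentially the same route as the paper: both deduce the theorem from Lemma \ref{lem:sym-Poisson}, first showing that $\sigma_O$ carries leaves to leaves via the shared characteristic distribution (the paper does this by noting $\sigma_O(L)$ is an integral submanifold contained in some leaf $L'$ and then applying $\sigma_{O^{-1}}$), and then transferring the relation on the bivectors to the induced leafwise forms and reading off the (anti-)isomorphism from $\det O$. Two bookkeeping remarks: the pullback relation comes out as $\sigma_O^*(\tilde\omega_1,\tilde\omega_2,\tilde\omega_3)=(\omega_1,\omega_2,\omega_3)O^{-1}$ (as in the paper) rather than with $O$ — immaterial since $\det O^{-1}=\det O$ — and both your surjectivity claim for the combinations $\sum_i c_i\pi_i^\sharp$ and the passage from the relation among the $(\omega_i^\flat)^{-1}$ to the one among the $\omega_i^\flat$ rest on the leafwise identity $\bigl(\sum_i c_i(\omega_i^\flat)^{-1}\bigr)^{-1}=\sum_i c_i\,\omega_i^\flat$ for unit $(c_1,c_2,c_3)$, which follows from the quaternionic relations and is likewise left implicit in the paper's proof.
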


\begin{proof}
First, we show that $\sigma_O$ preserves the symplectic distribution. By the fact $\pi_1^{\sharp}(T^*\mathfrak{g}^3)=\pi_2^{\sharp}(T^*\mathfrak{g}^3)=\pi_3^{\sharp}(T^*\mathfrak{g}^3)$ and Lemma \ref{lem:sym-Poisson}, we have
\begin{equation}
    \begin{aligned}
        {\sigma_O}_*(TL)& = {\sigma_O}_*(\pi_1^{\sharp}(T^*\mathfrak{g}^3|_L)) = ({\sigma_O}_*\pi_1)^{\sharp}(T^*\mathfrak{g}^3|_{\sigma_O({L})})\\
        &= (O_{11}\pi_1^{\sharp}+O_{21}\pi_2^{\sharp}+O_{31}\pi_3^{\sharp})(T^*\mathfrak{g}^3|_{\sigma_O({L})})\\
        & = {\sigma_O}_*(\pi_1^{\sharp}(T^*\mathfrak{g}^3|_{\sigma_O(L)})) = T\sigma_O(L).
    \end{aligned}
\end{equation}
Since $L$ is a leaf, we know $\sigma_O(L)$ is a connected integral submanifold, thus contained in some leaf $L^{\prime}$. By repeating this argument with $\sigma_{O^{-1}}$ on $L^{\prime}$, we conclude that $\sigma_{O^{-1}}(\sigma_O(L))\subset \sigma_{O^{-1}}(L^{\prime}) \subset L.$ Thus $\sigma_O(L)=L^{\prime}$. 

Let $(\omega_1,\omega_2,\omega_3)$ and $(\omega^{\prime}_1,\omega_2^{\prime},\omega^{\prime}_3)$ be the induced hypersymplectic structures on $L$ and $\sigma_O(L),$ respectively. We shall see that $\sigma_O^*(\omega^{\prime}_1,\omega_2^{\prime},\omega^{\prime}_3)=(\omega_1,\omega_2,\omega_3)O^{-1}$. By Lemma \ref{lem:sym-Poisson}, 
\begin{equation}\label{eq:2.7(1)}
(({\sigma_{O^{-1}}}_*\pi_1)^{\sharp},({\sigma_{O^{-1}}}_*\pi_2)^{\sharp},({\sigma_{O^{-1}}}_*\pi_3)^{\sharp}) = (\pi_1^{\sharp}, \pi_2^{\sharp}, \pi_3^{\sharp})O^{-1}.    
\end{equation}
Since the inclusion $i:L\rightarrow \mathfrak{g}^3$ is a Poisson map, we have 
\begin{equation}\label{eq:2.7(2)}
i_*(\omega_i^{\flat})^{-1}i^* = \pi_i^{\sharp}, \hspace{1em} i_*(({\sigma_O}^*\omega^{\prime}_i)^{\flat})^{-1}i^* = ({\sigma_{O^{-1}}}_*\pi_i)^{\sharp}.   
\end{equation}

Combining Equations \eqref{eq:2.7(1)} \eqref{eq:2.7(2)}, one can obtain
$$
({(\sigma_O^*\omega^{\prime}_1)^{\flat}}^{-1},{(\sigma_O^*\omega^{\prime}_2)^{\flat}}^{-1},{(\sigma_O^*\omega^{\prime}_3)^{\flat}}^{-1})=({\omega_1^{\flat}}^{-1},{\omega_2^{\flat}}^{-1},{\omega_3^{\flat}}^{-1})O^{-1}.
$$
Therefore, we get $\sigma_O^*(\omega^{\prime}_1,\omega_2^{\prime},\omega^{\prime}_3)=(\omega_1,\omega_2,\omega_3)O^{-1}$. 
\end{proof}

Furthermore, we define another map $\iota$, given by
\begin{equation}\label{Eqt:iota}
\begin{aligned}
\iota: \mathfrak{g}^3 &\rightarrow  \mathfrak{g}^3, \\
 (a,b,c)&\mapsto (-a,-b,-c).
\end{aligned}  
\end{equation}
By Theorem \ref{thm:o3sym-gen}, we have the following:
\begin{cor}\label{prop:iota-anti}
For any leaf $L$ of $\mathfrak{g}^3,$ the restriction $\iota|_L:L\rightarrow \iota(L)$
is an anti-isomorphism of hypersymplectic structures. 
\end{cor}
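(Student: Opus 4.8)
The plan is to deduce this corollary directly from Theorem \ref{thm:o3sym-gen} by exhibiting $\iota$ as a composition of maps that have already been analyzed. First I would observe that $\iota = \sigma_{-\mathrm{id}_3}$ in the notation of \eqref{Eqt:DefsigmaO}, where $-\mathrm{id}_3 \in \Or(3)$ is minus the identity matrix: indeed $(a_1,a_2,a_3)(-\mathrm{id}_3) = (-a_1,-a_2,-a_3)$. Since $\dim \R^3 = 3$ is odd, $\det(-\mathrm{id}_3) = (-1)^3 = -1$, so $-\mathrm{id}_3$ is an orientation-reversing orthogonal transformation. Applying the second bullet of Theorem \ref{thm:o3sym-gen} with $O = -\mathrm{id}_3$ immediately gives that $\iota(L)$ is a symplectic leaf and that $\iota|_L \colon L \to \iota(L)$ is an anti-isomorphism of hypersymplectic structures. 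That is essentially the whole argument.

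For a slightly more self-contained presentation, I would also spell out what anti-isomorphism means here concretely. By Theorem \ref{thm:o3sym-gen}, writing $(\omega_1,\omega_2,\omega_3)$ and $(\omega_1',\omega_2',\omega_3')$ for the induced hypersymplectic structures on $L$ and $\iota(L)$ respectively, we get $\iota^*(\omega_1',\omega_2',\omega_3') = (\omega_1,\omega_2,\omega_3)(-\mathrm{id}_3)^{-1} = -(\omega_1,\omega_2,\omega_3)$. So $\iota$ pulls each $\omega_i'$ back to $-\omega_i$; in particular, on the level of the metric $g^\flat = \omega_3^\flat (\omega_1^\flat)^{-1}\omega_2^\flat$, the three sign changes combine to flip the overall sign, matching the Remark after the definition of anti-isomorphism. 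Since $-\mathrm{id}_3 \in \Or(3)$ with determinant $-1$, this is precisely the stated anti-isomorphism (the permitted matrix is $O' = -\mathrm{id}_3$, or any element of the coset it represents).

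There is no real obstacle: the only thing to be careful about is the bookkeeping of inverses and the placement of the orthogonal matrix (left versus right multiplication on the row vector $(\omega_1,\omega_2,\omega_3)$), and the trivial but essential parity observation that $\det(-\mathrm{id}_3) = -1$ because we are in three dimensions — this is exactly why scalar negation is orientation-reversing on the triple of forms and hence yields an anti-isomorphism rather than an isomorphism. I would keep the proof to two or three lines: identify $\iota = \sigma_{-\mathrm{id}_3}$, note $\det(-\mathrm{id}_3) = -1$, and invoke Theorem \ref{thm:o3sym-gen}.
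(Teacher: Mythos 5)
Your argument is correct and is exactly the paper's route: the paper also obtains this corollary by viewing $\iota$ as $\sigma_O$ for $O=-\mathrm{id}_3\in\Or(3)$, whose determinant is $-1$ in odd dimension, and invoking the second bullet of Theorem \ref{thm:o3sym-gen}. Your extra bookkeeping (pulling each $\omega_i'$ back to $-\omega_i$ and the resulting sign flip of $g$) is consistent with the paper's conventions and adds nothing problematic.
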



\subsection{The $\su(2)$-case}
We seek an $S^2(\mathfrak{g})$-valued function $A$ for which Equation~\eqref{Eqt:brackets} defines a hyper-Lie Poisson structure. In general, the existence of such an $A$ is nontrivial. From now on, we shall only consider the special case $\g=\su(2)$.

 Let $\Phi$ be a function on $\su(2)^3$ defined by $\Phi(a_1,a_2,a_3)=\langle a_1,[a_2,a_3]\rangle$ for all $a_1,a_2,a_3\in \su(2)$. Let $\su(2)^3_{\star}$, $\su(2)^3_{+}$ and $\su(2)^3_{-}$ be the open subsets of $\su(2)^3$ consisting of all points such that $\Phi\neq 0$, $\Phi>0$ and $\Phi<0$, respectively.

In this case, following \cite{Xu}, certain $A$ is constructed on $\su(2)^3_{\star}$:
$$A(a_1,a_2,a_3)=\frac{1}{\Phi}([a_1,a_2]\otimes[a_1,a_2]+[a_2,a_3]\otimes[a_2,a_3]+[a_3,a_1]\otimes[a_3,a_1]).$$
Let $\pi_1$, $\pi_2$, $\pi_3$ be the bivector field on $\su(2)^3_{\star}$ corresponding to $A,$ as in Equations \eqref{Eqt:brackets}. 

\begin{thm}[\cite{Xu}, Theorem 4.2]
The bivector fields $(\pi_1, \pi_2, \pi_3)$ defined above are a hyper-Lie Poisson structure on $\su(2)^3_{\star}.$ As a consequence, each symplectic leaf admits a hypersymplectic structure.
\end{thm}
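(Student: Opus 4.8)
The plan is to verify the three defining conditions of a hyper-Lie Poisson structure directly for the explicit tensor $A$ on $\su(2)^3_\star$, following the strategy of \cite{Xu} but streamlining where possible. First I would check that $\pi_1$ is genuinely a Poisson tensor, i.e.\ that the brackets in \eqref{Eqt:brackets} satisfy the Jacobi identity. Since the $l^j_\xi$ generate the algebra of functions on $\su(2)^3$ (the $\su(2)$-case being three-dimensional, linear coordinates together with the structure constants suffice), it is enough to check Jacobi on triples of linear functions $l^j_\xi$. The first three families of brackets are those of the Lie--Poisson structure on $\g^\C$ viewed as a real Lie algebra, so Jacobi among $l^1,l^2$ is automatic; the genuinely new identities involve $l^3$ and the function $A_{\xi,\zeta}$, and here one must compute $\{A_{\xi,\zeta}, l^j_\eta\}$ and the cyclic sums, using the concrete formula $A = \tfrac{1}{\Phi}\sum_{\text{cyc}} [a_i,a_j]\otimes[a_i,a_j]$ together with the identity $\{\Phi,\cdot\}$ and the Jacobi identity in $\su(2)$. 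This is the computational heart of the argument and I expect it to be the main obstacle: one needs the $\SO(3)$-invariance of $A$, the behaviour of $\Phi$ under the Hamiltonian flows, and the special low-rank identities of $\su(2)$ (e.g.\ that $[a,b]$, when $a,b$ span a plane, is orthogonal to that plane) to make the $1/\Phi$ factors cancel correctly.

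Next I would establish that the symplectic foliations of $\pi_1$, $\pi_2$, $\pi_3$ coincide. By Lemma~\ref{lem:sym-Poisson} and Theorem~\ref{thm:o3sym-gen} it suffices to show $\pi_1^\sharp(T^*\su(2)^3_\star) = \pi_2^\sharp(T^*\su(2)^3_\star) = \pi_3^\sharp(T^*\su(2)^3_\star)$ as distributions; equivalently, one computes the rank of $\pi_1$ and identifies its kernel (the Casimir differentials). One checks that the Casimirs are generated by the pairwise inner products $\langle a_i, a_j\rangle$ for $i\neq j$ and the differences $\langle a_i,a_i\rangle - \langle a_j,a_j\rangle$ — exactly the functions cutting out $N_{q,r}$ in the statement quoted from Section~3 — so $\pi_1$ has constant rank equal to the dimension of a leaf, and the same functions are Casimir for $\pi_2$ and $\pi_3$ by the $\sigma_*$-symmetry. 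Hence the three characteristic distributions agree on $\su(2)^3_\star$.

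Finally I would verify that each leaf $L$ carries a hypersymplectic structure, i.e.\ conditions (1) and (2) of Definition~\ref{Def:hypersymplectic} for the induced forms $(\omega_1,\omega_2,\omega_3)$. Condition (1), the quaternionic relations $[(\omega_i^\flat)^{-1}\omega_j^\flat]^2 = -\mathrm{id}$, should follow from the algebraic structure of the brackets: the coefficient pattern in \eqref{Eqt:brackets} is precisely that of the imaginary quaternions acting on $\g^\C \oplus \g^\C$-type data, so $J_{ij} := (\omega_i^\flat)^{-1}\omega_j^\flat$ squares to $-\mathrm{id}$, and one checks the endomorphisms are genuinely complex structures (integrability comes for free on a symplectic leaf once $J_{ij}$ is parallel, or can be deduced from the compatibility of the $\pi_i$). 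For the positive-definiteness in condition (2), the symmetric bundle map $g^\flat = \omega_3^\flat(\omega_1^\flat)^{-1}\omega_2^\flat$ must be shown positive on each leaf; here the sign condition $\langle a_1,[a_2,a_3]\rangle<0$ built into the $N_{q,r}$ definition (equivalently the choice of connected component $\su(2)^3_-$) is what pins down the sign, and one evaluates $g$ at a convenient normal-form point of each leaf, using that leaves are homogeneous enough to reduce to one point. I would present the bracket computation for $\pi_1$ in full and then invoke Lemma~\ref{lem:sym-Poisson} and Corollary~\ref{prop:iota-anti} to transport everything to $\pi_2,\pi_3$ and to the components $\su(2)^3_\pm$, citing \cite{Xu} for the parts of the verification that are purely mechanical.
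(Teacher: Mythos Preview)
The paper does not supply a proof of this theorem; it is quoted verbatim from \cite{Xu}, and the surrounding text simply records the resulting symplectic distribution, Casimirs, and leafwise metric for later use. So the relevant comparison is with Xu's original argument, and your plan is essentially a faithful sketch of that: verify Jacobi for $\pi_1$ by checking the new identities involving $A_{\xi,\zeta}$ via the explicit $\su(2)$ formula, show the three characteristic distributions coincide by exhibiting the common Casimirs $\langle a_i,a_j\rangle$ and $\langle a_i,a_i\rangle-\langle a_j,a_j\rangle$, and then check the quaternionic relation $[(\omega_i^\flat)^{-1}\omega_j^\flat]^2=-\mathrm{id}$ on each leaf.

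Two small corrections. First, the theorem only asserts a \emph{hypersymplectic} structure on each leaf, i.e.\ condition~(1) of Definition~\ref{Def:hypersymplectic}; condition~(2) (positive-definiteness of $g$) is not part of what must be verified here, and indeed the paper observes in Remark~\ref{rem:PD&ND} that the metric is positive-definite only on $\su(2)^3_-$ and negative-definite on $\su(2)^3_+$. So your third step should stop after the algebraic quaternionic check. Second, invoking Theorem~\ref{thm:o3sym-gen} to handle the coincidence of foliations is circular, since that theorem is stated and proved under the standing hypothesis that $(\pi_1,\pi_2,\pi_3)$ is already a hyper-Lie Poisson structure (in particular it uses $\pi_1^\sharp(T^*\g^3)=\pi_2^\sharp(T^*\g^3)=\pi_3^\sharp(T^*\g^3)$ in its first displayed equation). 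You should instead argue directly: Lemma~\ref{lem:sym-Poisson} alone, applied to the cyclic permutation $\sigma$, gives $\pi_2=\sigma_*\pi_1$ and $\pi_3=\sigma_*^2\pi_1$, and once you have identified the five Casimirs and checked they are $\SO(3)$-invariant (hence Casimir for all three $\pi_i$) and that $\pi_1$ has constant rank~$4$ on $\su(2)^3_\star$, the equality of distributions follows.
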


Next we describe the symplectic distribution and Casimir functions of the Poisson structure $\pi_1$  on $\su(2)^3_{\star}$.
By Proposition 4.5 of \cite{Xu}, the following functions
\begin{equation}\label{Eqt:casimir}
\begin{aligned}
    &f_1(a_1,a_2,a_3)=\langle a_1,a_2 \rangle,\hspace{1em} f_2(a_1,a_2,a_3)=\langle a_2,a_3 \rangle,\\  &f_3(a_1,a_2,a_3)=\langle a_3,a_1 \rangle,\hspace{1em} f_4(a_1,a_2,a_3)=\langle a_1,a_1 \rangle-\langle a_2,a_2 \rangle,\\
    & 
    f_5(a_1,a_2,a_3)=\langle a_2,a_2 \rangle-\langle a_3,a_3 \rangle,
\end{aligned}
\end{equation}
form a complete set of Casimirs for $(\su(2)^3_{\star}, \pi_1)$. The symplectic distribution $\mathcal{D} = \pi_1^{\sharp}(T^*\su(2)^3_{\star})$ is a 4-dimensional regular distribution. We have that $\mathcal{D}=\operatorname{Span}\{V_0, V_1, V_2, V_3\}$,
where $V_i\in \mathfrak{X}(\su(2)^3_{\star}),$ given by
\begin{equation}\label{Eqt:global-frame}
\begin{aligned}
V_0|_{(a_1,a_2,a_3)} &= ([a_2,a_3], [a_3,a_1], [a_1,a_2]),\\ V_1|_{(a_1,a_2,a_3)} &= (0, [a_2,a_1], [a_3,a_1]),\\
V_2|_{(a_1,a_2,a_3)} &= ([a_1,a_2], 0, [a_3,a_2]), \\
V_3|_{(a_1,a_2,a_3)} &= ([a_1,a_3], [a_2,a_3],0).
\end{aligned}    
\end{equation}
Each leaf $L$ of $(\su(2)^3_{\star};\pi_1,\pi_2,\pi_3)$ is hypersymplectic, which can be described as follows:

\begin{propn}
The complex structures $I,J,K$ on $L$ are given by the following formulae:
\begin{align}
I(V_0,V_1,V_2,V_3)&=(V_1,-V_0,V_3,-V_2),\label{Eqt:leaf-I}\\
J(V_0,V_1,V_2,V_3)&=(V_2,-V_3,-V_0,V_1),\label{Eqt:leaf-J}\\
K(V_0,V_1,V_2,V_3)&=(V_3,V_2,-V_1,-V_0).\label{Eqt:leaf-K}
\end{align}
\end{propn}
\begin{proof}
By Equations (4) and (5) of \cite{Xu}, we have 
$$I=(\omega_3^b)^{-1}\circ \omega_2^b, J=(\omega_1^b)^{-1}\circ \omega_3^b, K=(\omega_2^b)^{-1}\circ \omega_1^b.$$
By Lemma 3.9, 3.10 and Proposition 4.6 of \cite{Xu}, the Hamiltonian vector fields for the Poisson structure $\pi_1$ on $\mathfrak{su}(2)^3_{\star}$ are described by
\begin{equation}\label{Eqt: Hamiltonian_vf}
\begin{aligned}
    &X_{l^1_{a_1}} = X_{l^2_{a_2}} = X_{l^3_{a_3}}= (0, [a_2,a_1], [a_3,a_1]), \\
    & X_{l^1_{a_2}} = -X_{l^2_{a_1}}  = ([a_1,a_2], 0, [a_3,a_2]), \\
    & X_{l^1_{a_3}} = -X_{l^3_{a_1}} = ([a_1,a_3], [a_2,a_3],0),\\
    & X_{l^2_{a_3}} = - X_{l^3_{a_2}} = ([a_2,a_3], [a_3,a_1], [a_1,a_2]).
\end{aligned}
\end{equation}
Similarly, one obtains Hamiltonian vector fields for $\pi_2$ and $\pi_3$. One computes directly that
\begin{equation*}
\begin{aligned}
&I(V_0)=(\omega_3^{\flat})^{-1}\circ \omega_2^{\flat}(V_0)
=\pi_3^\sharp(0,0,a_1)=V_1,\\
&I(V_1)=(\omega_3^{\flat})^{-1}\circ \omega_2^{\flat}(-V_1)
=(\omega_3^{\flat})^{-1}(dl^2_{a_1})
=-([a_2,a_3],-[a_1,a_3],[a_1,a_2])=-V_0,\\
&I(V_2)=(\omega_3^{\flat})^{-1}\circ \omega_2^{\flat}(V_2)
=(\omega_3^{\flat})^{-1}(dl^2_{a_2})
=-([a_3,a_1],-[a_2,a_3],0)=V_3,\\
&I(V_3)=(\omega_3^{\flat})^{-1}\circ \omega_2^{\flat}(V_3)
=(\omega_3^{\flat})^{-1}(dl^2_{a_3})
=-([a_1,a_2],0,[a_3,a_2])=-V_2.
\end{aligned}
\end{equation*}

The other identities for $J$ and $K$ follow similarly.
\end{proof}

\begin{propn}
The (pseudo-)Riemannian metric $g$ on $L$ is given by 
\begin{equation}\label{Eqt:Xu-metric}
g(V_i,V_j)=-\delta_{ij}\Phi,
\end{equation}
for all $i,j = 0,1,2,3$

\end{propn}
\begin{proof}
The identity $g(V_i,V_i)=-\Phi$ was checked in Theorem 3.11 and Proposition 4.6 of \cite{Xu}. To show that $V_i, V_j$ are orthogonal whenever $i\neq j$, first note that $g = \omega_1^{\flat}\circ I^{-1}$, by Equation (5) of \cite{Xu}. The result then follows from a direct verification using Equations \eqref{Eqt:leaf-I} and \eqref{Eqt: Hamiltonian_vf}.
\end{proof}


\begin{remark}\label{rem:PD&ND}
Since the manifold $\su(2)^3_{\star}$ has two connected components $\su(2)^3_{+}$ and $\su(2)^3_{-}$, we know that $L$ lies in one of them. Therefore, the pseudo-metric is either positive definite, or negative definite.
\end{remark}

\section{Leaves of the Hyper-Lie Poisson structure on $\su(2)^3_{\star}$}
\subsection{Description of leaves}
We shall give an explicit description to any arbitrary symplectic leaf of $\pi_1$ on $\su(2)^3_{\star}$. The idea is that, since any Casimir function of a Poisson structure must remain constant on a leaf, any leaf must lie in some level set of Casimir functions. We shall make use of the Casimir functions in Equations \eqref{Eqt:casimir} and look at the corresponding level sets.

Let $f:=(f_1,f_2,f_3,f_4,f_5): \su(2)^3 \rightarrow \mathbb{R}^5$ be the direct sum of the Casimir functions in Equations \eqref{Eqt:casimir}. In this subsection, we prove the following result.
\begin{thm}\label{Thm:leaf}
For any $\eta^+\in \su(2)^3_{+}$, i.e. $\Phi(\eta^+)>0,$ the symplectic leaf of $\pi_1$ through $\eta^+$ is 
$$N_{\eta^+}:= f^{-1}(f(\eta^+))\cap \su(2)^3_{+}.$$  
Similarly, for any $\eta^-\in \su(2)^3_{-}$, i.e. $\Phi(\eta^-)<0,$ the symplectic leaf through $\eta^-$ is $$N_{\eta_-}:=f^{-1}(f(\eta^-))\cap \su(2)^3_{-}.$$
\end{thm}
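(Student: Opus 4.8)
The plan is to show the two containments: any leaf through $\eta^\pm$ is contained in the indicated set, and conversely the indicated set is contained in the leaf (i.e.\ it is connected and everywhere tangent to the symplectic distribution $\mathcal{D}$).

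First I would establish the easy inclusion. Since $f_1,\dots,f_5$ are Casimir functions for $\pi_1$, they are constant along any symplectic leaf; hence the leaf through $\eta^+$ lies in $f^{-1}(f(\eta^+))$, and since leaves are connected and $\su(2)^3_\star = \su(2)^3_+ \sqcup \su(2)^3_-$ with $\Phi$ of constant sign on each (Remark~\ref{rem:PD&ND}), the leaf lies in $f^{-1}(f(\eta^+))\cap\su(2)^3_+ = N_{\eta^+}$. The same argument applies verbatim to $\eta^-$.

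For the reverse inclusion, the main work is to show that $N_{\eta^+}$ is a connected $4$-dimensional submanifold on which the symplectic distribution $\mathcal{D}$ restricts to the full tangent space. Tangency is automatic once I check dimensions: at any point of $N_{\eta^+}$ the tangent space to the level set $f^{-1}(f(\eta^+))$ is the common kernel of $df_1,\dots,df_5$, hence is $\mathcal D$-invariant in the sense that $\mathcal{D}$ is contained in it (because each $V_i$ annihilates every Casimir); so if I can show $f^{-1}(f(\eta^+))\cap\su(2)^3_+$ is a manifold of dimension exactly $4$ — equivalently that $df_1,\dots,df_5$ are linearly independent at each point of $\su(2)^3_\star$ — then $TN_{\eta^+}=\mathcal D|_{N_{\eta^+}}$ and $N_{\eta^+}$ is an integral manifold of $\mathcal D$, hence open in the leaf through any of its points. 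A convenient way to see the independence of the differentials is to exhibit, at a point $(a_1,a_2,a_3)$ with $\Phi\neq 0$, five tangent vectors pairing in a nondegenerate way against $df_1,\dots,df_5$: moving $a_i$ in directions built from the $a_j$'s themselves (e.g.\ $(\delta a_1,\delta a_2,\delta a_3)=(a_2,0,0)$ adjusts $f_1$ and $f_4$, etc.), using that when $\Phi\neq0$ the triple $(a_1,a_2,a_3)$ is a (not necessarily orthonormal) frame of the $3$-dimensional space $\su(2)$. This reduces the independence to the invertibility of an explicit $5\times 5$ Gram-type matrix built from the inner products $\langle a_i,a_j\rangle$, which is nonzero precisely because $\Phi\neq 0$ forces the $a_i$ to be linearly independent.

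The remaining point — and the one I expect to be the real obstacle — is \emph{connectedness} of $N_{\eta^+}$: a priori $f^{-1}(f(\eta^+))\cap\su(2)^3_+$ could be a disjoint union of several leaves, and then the theorem as stated would fail. I would prove connectedness by the explicit coordinate description developed in the next section (the model $N_{q,r}$), where $q=f_4(\eta^+)$, $r=f_5(\eta^+)$, and the other Casimirs vanish on the leaf after using the $\SO(3)$-action of Theorem~\ref{thm:o3sym-gen} to rotate into the ``orthogonal'' locus $f_1=f_2=f_3=0$; in that model one can parametrize the set directly (the conditions force $a_1,a_2,a_3$ to be mutually orthogonal with prescribed norm-differences and $\langle a_1,[a_2,a_3]\rangle<0$) and exhibit it as a manifold diffeomorphic to a connected space, e.g.\ by writing it as a bundle over a connected base (a choice of oriented orthogonal frame direction) with connected fibers, or by observing it is the image of a connected space under a continuous surjection. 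Once connectedness is in hand, $N_{\eta^+}$ being a connected integral manifold of the $4$-dimensional involutive distribution $\mathcal{D}$ containing $\eta^+$ must coincide with the leaf through $\eta^+$, completing the proof; the case of $\eta^-$ is identical with signs reversed.
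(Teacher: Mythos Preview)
Your plan is correct and mirrors the paper's architecture: the easy inclusion via Casimirs, the rank computation showing $f$ is a submersion on $\su(2)^3_\star$ so that level sets are $4$-dimensional integral submanifolds of $\mathcal D$, and then connectedness by reducing to the model $N_{q,r}$ via the $\operatorname{O}(3)$-action and parametrizing that explicitly. One small slip: after rotating by $O$, the values of $f_4,f_5$ change too, so the $(q,r)$ you land on are \emph{not} $f_4(\eta^+),f_5(\eta^+)$ but rather the ordered eigenvalue gaps of the Gram matrix $G_{ij}=\langle a_i,a_j\rangle$; also, since $N_{q,r}\subset\su(2)^3_-$, going from $\eta^+$ requires $\det O=-1$ (or handle $\eta^-$ first and then apply $\iota$).

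Where your route genuinely differs from the paper is in \emph{how} you find the rotation $O$. You (implicitly) diagonalize the Gram matrix at an interior point---a one-line application of the spectral theorem. The paper instead passes to a boundary point $\mu\in\overline{N}\setminus N$ (proving such a point exists via Xu's lemma on the flow of $V_0$), and then rotates $\mu$ into the form $(\mu_1'',\mu_2'',0)$ by an elementary two-step $\SO(3)$ argument (Lemma~\ref{lem: transformO}). Your approach is shorter and avoids the flow lemma entirely; the paper's approach, on the other hand, yields as a by-product the description of $\overline{N}_{q,r}\setminus N_{q,r}$ that is reused later (Remark after Corollary~4.2, Propositions~\ref{prop:Nq0_reg} and~\ref{prop:Nqr-nilp}), so it is not wasted work in the larger context.
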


First, we show that any level set of $f$ is an integral submanifold of the symplectic distribution $\mathcal{D} = \pi_1^{\sharp}(T^*\su(2)^3_{\star})$. 
\begin{propn}
For any $\eta \in \su(2)^3_{\star},$ the level set $f^{-1}(f(\eta))$ is a closed embedded submanifold of $\su(2)^3_{\star}.$ Moreover, its tangent space coincides with $\mathcal{D}$.
\end{propn}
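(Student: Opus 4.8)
The plan is to prove the proposition in two parts: first the smoothness of the level set, then the identification of its tangent space with $\mathcal{D}$.

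For smoothness, I would show that the map $f = (f_1,\dots,f_5)$ is a submersion at every point of $\su(2)^3_{\star}$. Since $\dim \su(2)^3 = 9$ and $\mathcal{D}$ is a $4$-dimensional regular distribution, one expects the fibers of $f$ to be $4$-dimensional, which forces $df$ to have rank $5$ everywhere on $\su(2)^3_{\star}$. The cleanest way is to compute the differentials $df_i$ explicitly: identifying $T_{(a_1,a_2,a_3)}\su(2)^3 \cong \su(2)^3$, we have $df_4|_{(a_1,a_2,a_3)}(u_1,u_2,u_3) = 2\langle a_1, u_1\rangle - 2\langle a_2, u_2\rangle$, and similarly for the others, all expressed in terms of the $a_i$ via the (nondegenerate) Killing form. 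I would then argue linear independence of $df_1,\dots,df_5$ at a point where $\Phi \neq 0$: the condition $\Phi(a_1,a_2,a_3) = \langle a_1,[a_2,a_3]\rangle \neq 0$ means that $\{a_1,a_2,a_3\}$ is a basis of $\su(2)$ (since $[a_2,a_3]$ pairs nontrivially with $a_1$, all three are linearly independent in the $3$-dimensional space $\su(2)$). Using this basis, a dependence relation $\sum \lambda_i df_i = 0$ unwinds coordinate-by-coordinate into linear equations on the Gram-type data $\langle a_i,a_j\rangle$ that force all $\lambda_i = 0$; this is the one genuinely computational step. Submersivity then gives that $f^{-1}(f(\eta))$ is a closed embedded submanifold of $\su(2)^3_{\star}$ of dimension $4$.

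For the tangent space, the inclusion $T_\eta f^{-1}(f(\eta)) \supseteq \mathcal{D}_\eta$ is immediate: each $f_i$ is a Casimir of $\pi_1$, so $f_i$ is constant along any integral leaf of $\mathcal{D}$, hence $df_i$ annihilates $\mathcal{D}_\eta$; equivalently, $\mathcal{D}_\eta = \pi_1^\sharp(T_\eta^*\su(2)^3_{\star}) \subseteq \ker df_i$ because $df_i(\pi_1^\sharp \alpha) = -\pi_1^\sharp(df_i)(\alpha) = 0$ for all $\alpha$. Since this holds for $i = 1,\dots,5$, we get $\mathcal{D}_\eta \subseteq \bigcap_i \ker df_i = T_\eta f^{-1}(f(\eta))$. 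Both sides are now $4$-dimensional — $\mathcal{D}$ by the stated regularity, and the level set by the submersion count above — so the inclusion is an equality.

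The main obstacle is the explicit verification that $df_1,\dots,df_5$ are linearly independent at each point of $\su(2)^3_{\star}$; everything else is formal bookkeeping. One could also streamline it by noting that it suffices to check the rank is $\geq 5$ at one representative point in each level set, since the $\SO(3)$-action $\sigma_O$ (Lemma \ref{lem:sym-Poisson}) together with the scaling $\rho_t$ and the fact that $f$ transforms equivariantly lets one reduce to a normal form $(a_1,a_2,a_3)$ with the $a_i$ mutually orthogonal — but the direct computation is short enough that I would just carry it out in the chosen basis.
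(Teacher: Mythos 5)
Your proposal is correct and takes essentially the same route as the paper: both show $f$ has rank $5$ at every point of $\su(2)^3_{\star}$ (your observation that $\Phi\neq 0$ makes $\{a_1,a_2,a_3\}$ a basis, so a relation $\sum\lambda_i\,df_i=0$ forces all $\lambda_i=0$, is exactly the content of the paper's rank computation), and then identify the resulting $4$-dimensional tangent space of the fiber with $\mathcal{D}$ by a dimension count. The only cosmetic difference is that the paper checks $V_i\in\ker(f_*)$ directly on the frame of Equations \eqref{Eqt:global-frame}, whereas you obtain the inclusion $\mathcal{D}\subseteq\ker(f_*)$ from the already-stated Casimir property of the $f_i$; both are valid.
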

\begin{proof}
Restrict the map $f$ to $\su(2)^3_{\star}$, then it is straightforward to verify that ${\rm Rank( Jac}(f))|_{\su(2)^3_{\star}}=5.$ Consequently, for any $\eta \in \su(2)^3_{\star},$ $f^{-1}(f(\eta))$  is a 4-dimensional closed embedded submanifold of $\su(2)^3_{\star}$.

For the second assertion, we use the fact that $Tf^{-1}(f(\eta)) = \ker(f_*)$. Moreover, one checks by direct computation that $V_i\in \ker(f_*)$, for any $i=0,1,2,3,$ as in Equations \eqref{Eqt:global-frame}. Since $\{V_0,V_1,V_2,V_3\}$ are linearly independent over any point of $\su(2)^3_{\star}$, we conclude that $$Tf^{-1}(f(\eta))= \rm Span \{V_0,V_1,V_2,V_3\} = \mathcal{D},$$ on any point of $f^{-1}(f(\eta)).$
\end{proof}

We will show the following result on connectedness.

\begin{propn}\label{Prop:connected}
For any  $\eta^+\in \su(2)^3_{+}$ and $\eta^-\in \su(2)^3_{-}$, both $N_{\eta^+}$ and $N_{\eta^-}$ are connected. Indeed, they are the connected components of the level set of $f$. 
\end{propn}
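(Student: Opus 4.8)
The plan is to understand the level set $f^{-1}(f(\eta))\cap\su(2)^3_{\star}$ very explicitly by exploiting the geometric meaning of the five Casimirs: they record the pairwise inner products $\langle a_i,a_j\rangle$ (for $i\ne j$, these vanish along the level set once we normalize), together with the two differences of squared norms. In other words, a point of the level set is a triple $(a_1,a_2,a_3)$ of mutually orthogonal vectors in the $3$-dimensional Euclidean space $\su(2)$ with prescribed squared norms $\langle a_i,a_i\rangle=c_i>0$, and with $\Phi=\langle a_1,[a_2,a_3]\rangle$ of a fixed sign. First I would reduce to the case of prescribed norms: the set of ordered orthogonal frames $(a_1,a_2,a_3)$ with $|a_i|=\sqrt{c_i}$ is the image of $\mathrm{O}(3)$ under $O\mapsto(\sqrt{c_1}\,O e_1,\sqrt{c_2}\,Oe_2,\sqrt{c_3}\,Oe_3)$ (using an orthonormal basis $e_1,e_2,e_3$ of $\su(2)$), hence diffeomorphic to $\mathrm{O}(3)$, which has exactly two connected components $\SO(3)$ and its complement.

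The key step is then to check that the sign of $\Phi$ distinguishes these two components. For an orthogonal triple, $[a_2,a_3]$ is parallel to $a_1$ (since in $\su(2)$ the bracket of two vectors is, up to a universal nonzero scalar, their cross product, which is orthogonal to both $a_2$ and $a_3$, hence a multiple of $a_1$); writing $[a_2,a_3]=\lambda a_1$, one computes $\langle a_1,[a_2,a_3]\rangle=\lambda\langle a_1,a_1\rangle$, and $\lambda$ has a definite sign exactly according to whether $(a_1,a_2,a_3)$ is positively or negatively oriented — equivalently whether the defining $O$ lies in $\SO(3)$ or not. So $\Phi>0$ on one $\mathrm{O}(3)$-component and $\Phi<0$ on the other, and since each of $\SO(3)$ and its coset is connected, both $N_{\eta^+}$ and $N_{\eta^-}$ are connected. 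Conversely, $f^{-1}(f(\eta))=N_{\eta^+}\sqcup(\text{the }\Phi<0\text{ part})$ or similarly with signs swapped, so these are exactly the two connected components of the full level set. Note that $N_\eta$ is never empty when $\eta$ is, because $\eta$ itself lies in it, so the normalizing constants $c_i$ are automatically realizable.

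The main obstacle I anticipate is purely bookkeeping: pinning down the precise proportionality constant relating the Lie bracket on $\su(2)$ (with the Killing form normalization) to the Euclidean cross product, and making sure the orientation/sign argument is airtight — in particular that the map $\mathrm{O}(3)\to\{\text{orthogonal triples with fixed norms}\}$ is a diffeomorphism and not merely a surjection (it is a bijection since an orthogonal frame determines the orthogonal transformation carrying $e_i$ to $a_i/|a_i|$ uniquely, and it is smooth with smooth inverse by linear algebra). Once the identification with $\mathrm{O}(3)$ and the sign computation for $\Phi$ are in place, connectedness of $\SO(3)$ (a standard fact) and of its coset finishes the argument, and the identification of $N_{\eta^\pm}$ with connected components of $f^{-1}(f(\eta))$ is then immediate. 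This also sets up cleanly for the subsequent identification $L=N_{\eta^+}$ in Theorem \ref{Thm:leaf}, since a leaf is a connected integral submanifold of $\mathcal{D}$ and $\mathcal{D}$ is the tangent distribution to the level set.
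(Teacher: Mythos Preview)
Your argument has a genuine gap, not merely bookkeeping. You assert that on a level set the pairwise inner products $\langle a_i,a_j\rangle$ vanish ``once we normalize'' and that the squared norms $\langle a_i,a_i\rangle$ are prescribed constants $c_i$. Neither holds for a general $\eta$. The Casimirs $f_1,f_2,f_3$ fix $\langle a_i,a_j\rangle$ to the (generally nonzero) values $f_k(\eta)$, and $f_4,f_5$ fix only the \emph{differences} $\langle a_1,a_1\rangle-\langle a_2,a_2\rangle$ and $\langle a_2,a_2\rangle-\langle a_3,a_3\rangle$, not the individual norms. Your identification of the level set with $\operatorname{O}(3)$ is therefore off by one dimension: the leaves are $4$-dimensional, and even the special leaves $N_{q,r}$ (where the off-diagonal inner products \emph{do} vanish) are shown in the paper to be $\mathbb{R}_{>0}\times\SO(3)$, the extra factor coming from the free parameter $s=|a_3|$. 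The vague phrase ``once we normalize'' does not constitute a reduction; the action of $\operatorname{O}(3)$ via $\sigma_O$ changes the values of all five Casimirs simultaneously, so you cannot simply rotate an arbitrary level set onto one with $f_1=f_2=f_3=0$ without further argument.

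The paper proceeds in two steps. First it proves connectedness directly for the special $N_{q,r}$ via the explicit diffeomorphism with $\mathbb{R}_{>0}\times\SO(3)$ (Proposition~\ref{prop: Nqr-connected}); your orientation argument for the sign of $\Phi$ is essentially what underlies this step. Second, it reduces an arbitrary $N_{\eta^\pm}$ to some $N_{q,r}$ by producing an $O\in\operatorname{O}(3)$ with $\sigma_O(N_{\eta^\pm})=N_{q,r}$ (Proposition~\ref{prop:transfer-Nqr}). Finding this $O$ is the nontrivial part you are missing: one passes to a boundary point $\mu\in\overline{N}_{\eta^\pm}\setminus N_{\eta^\pm}$ (shown nonempty by following the flow of $V_0$ until $\Phi$ hits zero, a lemma of Xu), and then rotates the degenerate triple $\mu$ into the form $(\mu_1'',\mu_2'',0)$ with $\langle\mu_1'',\mu_2''\rangle=0$ (Lemma~\ref{lem: transformO}). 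Only after this reduction does the $\SO(3)$-frame picture you describe become available.
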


Before proving Proposition \ref{Prop:connected}, we shall see that Theorem \ref{Thm:leaf} follows from the previous results.

\begin{proof}[Proof of Theorem 3.1.]
Suppose $L$ is a leaf that intersects with $N_{\eta^+}$, then $L\subset \su(2)^3_{+}.$ Further, each Casimir function is constant on $L$, thus $L$ is contained in some level set of $f$. Then $L\subset N_{\eta^+}$.

On the other hand, we know that $N_{\eta^+}$ is a connected integral submanifold of $\mathcal{D}.$ Thus, it must also be a subset of $L$, which is a maximal connected integral submanifold. Thus, $N_{\eta^+}=L.$  Applying a similar argument, we deduce that $N_{\eta^-}$ is also a symplectic leaf.
\end{proof}

The rest of this subsection is devoted to the proof of Proposition \ref{Prop:connected}. The idea is that, we consider a class of diffeomorphisms induced by $\operatorname{O}(3)$ as in Equation \eqref{Eqt:DefsigmaO}, which preserves level sets. With the help of those diffeomorphisms, we may transform an arbitrary level set into a simpler form. Here, by `simpler form', we mean a subcollection of level sets $N_{q,r}$ for $q,r\geq 0,$ where
\begin{equation}\label{Eqt:Nqr-def}
\begin{aligned}
N_{q,r}:=\{& (a_1,a_2,a_3)\in \su(2)^3_{\star}\hspace{0.3em}| \hspace{0.3em}\langle a_1,[a_2,a_3]\rangle < 0, \hspace{0.5em} \langle a_i,a_j \rangle =0, \forall i\neq j , \\ 
&\langle a_1,a_1\rangle -\langle a_2,a_2 \rangle = q, \langle a_2,a_2\rangle -\langle a_3,a_3 \rangle = r \}.   
\end{aligned}    
\end{equation}

Note that $N_{q,r}$ can also be understood as the level set through $\eta^-_{q,r} = (e_1, \sqrt{q+1}e_2, -\sqrt{q+r+1}e_3),$ where $\{e_1,e_2,e_3\}$ is a positively-oriented orthonormal basis in $\su(2).$

\begin{propn}\label{prop: Nqr-connected}
For any $q,r \geq 0$, the level set $N_{q,r}$ is diffeomorphic to $\mathbb{R}_{>0}\times \SO(3)$, and thus connected. 
\end{propn}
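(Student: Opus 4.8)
The plan is to build an explicit diffeomorphism by exploiting the fact that the defining conditions for $N_{q,r}$ force $(a_1,a_2,a_3)$ to be a positively-oriented orthogonal triple in $\su(2)$, with prescribed ratios of squared norms. First I would fix a positively-oriented orthonormal basis $\{e_1,e_2,e_3\}$ of $(\su(2),\langle-,-\rangle)$ and recall that $\SO(3)$ acts simply transitively on the set of positively-oriented orthonormal frames of $\su(2)$ (via the identification $\SO(\su(2))\cong\SO(3)$). I would then define a map
\begin{equation*}
\Psi\colon \mathbb{R}_{>0}\times\SO(3)\longrightarrow N_{q,r},\qquad
(s,R)\longmapsto \bigl(s\,Re_1,\ s\sqrt{q/s^2+1}\;Re_2,\ -\,s\sqrt{(q+r)/s^2+1}\;Re_3\bigr),
\end{equation*}
or, more transparently, parametrize a leaf point by the length $\lambda_1=\sqrt{\langle a_1,a_1\rangle}>0$ of the first vector: given $\lambda_1>0$ the constraints $f_4=q$, $f_5=r$ determine $\lambda_2=\sqrt{\lambda_1^2-q}$ hmm — this requires $\lambda_1^2\ge q$, so the cleaner choice is to use $\lambda_3 = \sqrt{\langle a_3,a_3\rangle} > 0$ as the free parameter, giving $\lambda_2^2=\lambda_3^2+r$, $\lambda_1^2=\lambda_3^2+q+r$, all automatically positive, and then $(a_1,a_2,a_3)=(\lambda_1 Re_1,\lambda_2 Re_2,-\lambda_3 Re_3)$ for a unique $R\in\SO(3)$.

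The key steps are then: (1) show $\Psi$ is well-defined, i.e. every such tuple actually lies in $N_{q,r}$ — the orthogonality conditions $\langle a_i,a_j\rangle=0$ hold since $\{Re_i\}$ is orthogonal, the norm conditions $f_4=q$, $f_5=r$ hold by construction, and the sign condition $\langle a_1,[a_2,a_3]\rangle<0$ holds because $[Re_2,-Re_3]=-R[e_2,e_3]$ is a negative multiple of $Re_1$ (using that $\{e_i\}$ is positively oriented so $[e_2,e_3]$ is a positive multiple of $e_1$, up to the overall normalization constant coming from the bracket vs. the metric, whose sign is fixed); (2) show $\Psi$ is a bijection — surjectivity is exactly the statement that any tuple in $N_{q,r}$ is a rescaled positively-oriented orthogonal triple, which follows from the orthogonality plus the sign condition forcing positive orientation, and injectivity from simple transitivity of the $\SO(3)$-action on oriented orthonormal frames together with the fact that the three lengths are determined by $\lambda_3$ and $(q,r)$; (3) check smoothness of $\Psi$ and of its inverse — $\Psi$ is manifestly smooth, and the inverse sends $(a_1,a_2,a_3)$ to $\bigl(\lambda_3,R\bigr)$ where $\lambda_3=\sqrt{\langle a_3,a_3\rangle}$ and $R$ is recovered smoothly from the orthonormalized frame $(\lambda_1^{-1}a_1,\lambda_2^{-1}a_2,-\lambda_3^{-1}a_3)$, all smooth functions on $N_{q,r}$ since the $\lambda_i$ are bounded below by positive constants there.

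The main obstacle I expect is purely bookkeeping around orientations and normalization constants: one must be careful that the sign convention in $\Phi=\langle a_1,[a_2,a_3]\rangle<0$ is matched correctly with the choice "positively-oriented" for $\{e_1,e_2,e_3\}$, so that the sign forces $R\in\SO(3)$ rather than merely $R\in\Or(3)$ (a point in $N_{q,r}$ with the opposite orientation would have $\Phi>0$ and hence lie in $\su(2)^3_+$, not $N_{q,r}\subset\su(2)^3_-$). Once the orientation is pinned down, connectedness of $N_{q,r}$ is immediate since $\mathbb{R}_{>0}\times\SO(3)$ is connected. I would also remark that this description makes Proposition 3.5 (the analogous statement for $N_{\eta^+}$, $N_{\eta^-}$) follow by applying a suitable $\sigma_O$ from Theorem 2.7 to reduce an arbitrary level set to one of the $N_{q,r}$, which is presumably the content of the surrounding argument.
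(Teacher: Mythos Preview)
Your proposal is correct and matches the paper's proof essentially verbatim: the paper also parametrizes by $s=\lVert a_3\rVert\in\mathbb{R}_{>0}$ together with the positively-oriented orthonormal frame $(a_1/\lambda_1,\,a_2/\lambda_2,\,-a_3/\lambda_3)\in\SO(3)$, writing the map as $(s,(e_1,e_2,e_3))\mapsto(\sqrt{s^2+r+q}\,e_1,\sqrt{s^2+r}\,e_2,-s\,e_3)$ and exhibiting the same smooth inverse you describe. Your additional care about the orientation/sign bookkeeping and the reason for choosing $\lambda_3$ rather than $\lambda_1$ as the free parameter is exactly right.
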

\begin{proof}
By definition, any element in $N_{q,r}$ is of the form $$(\sqrt{s^2+r+q}e_1, \sqrt{s^2+r}e_2, -se_3),$$ for some $s\in \mathbb{R}_{>0}$, where $\{e_1,e_2,e_3\}$ constitutes a  positively-oriented orthonormal basis in $\su(2).$ Given a fixed isomorphism $\su(2)\cong \R^3$, we can regard   $(e_1,e_2,e_3)$  as an element in $\rm{SO}(3)$. Thus the natural map
\begin{equation*}
    \begin{aligned}
    \mathbb{R}_{>0}\times \rm{SO}(3) &\rightarrow 
        N_{q,r} \\(s, (e_1,e_2,e_3)) &\mapsto
        (\sqrt{s^2+r+q}e_1, \sqrt{s^2+r}e_2, -se_3)  
    \end{aligned}
\end{equation*}
has a smooth inverse $$(a_1,a_2,a_3)\mapsto (||a_3||, (\frac{a_1}{\sqrt{||a_3||^2+r+q}}, \frac{a_2}{\sqrt{||a_3||^2+r}},-\frac{a_3}{||a_3||})).$$
\end{proof}


\begin{remark}
Following \cite{YangZheng}, one can also prove the previous result by showing $N_{q,r} \cong T\mathbb{S}^2\setminus 0_{\mathbb{S}^2} \cong \mathbb{R}_{>0}\times \SO(3),$ where $0_{\mathbb{S}^2}$ denote the zero section of $T\mathbb{S}^2.$
\end{remark}

To show that each level set can be transformed into $N_{q,r}$ by diffeomorphism, we use the map  $\sigma_O$ as defined  in Equation \eqref{Eqt:DefsigmaO}.
The proof of the following result is a direct computation.

\begin{propn}\label{prop:so3action}
For any $O\in \operatorname{O}(3)$ and $\eta^+ \in \su(2)^3_+$, the map $\sigma_O \colon N_{\eta^+} \rightarrow N_{\sigma_O(\eta^+)}$  is  a well-defined diffeomorphism. Similarly, the result is still valid if we replace $\eta^+$ by any $\eta^- \in \su(2)^3_-$.  
\end{propn}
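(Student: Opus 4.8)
The plan is to reduce the statement to two elementary facts about how $\sigma_O$ interacts with the function $\Phi$ and with the Casimir map $f$, and then to assemble the diffeomorphism from the observation that $\sigma_O$ is a linear automorphism of $\su(2)^3$.

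First I would record that $\sigma_O$ is a linear isomorphism of $\su(2)^3$ with inverse $\sigma_{O^{-1}}=\sigma_{O^\top}$, hence a diffeomorphism, and that $\Phi\circ\sigma_O=\det(O)\,\Phi$. The latter holds because, after fixing a positively oriented orthonormal basis of $\su(2)$, $\Phi(a_1,a_2,a_3)$ is, up to a fixed positive constant, the determinant of the matrix with columns $a_1,a_2,a_3$, and right multiplication of that matrix by $O$ multiplies the determinant by $\det(O)$; equivalently one notes that $\Phi$ is alternating and trilinear in $(a_1,a_2,a_3)$, hence $\operatorname{O}(3)$-equivariant with character $\det$. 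Consequently $\sigma_O$ maps $\su(2)^3_\star$ onto itself, carrying $\su(2)^3_+$ to $\su(2)^3_+$ and $\su(2)^3_-$ to $\su(2)^3_-$ when $\det(O)=1$, and interchanging the two components when $\det(O)=-1$. In either case $\sigma_O(\eta^+)\in\su(2)^3_\star$, so $N_{\sigma_O(\eta^+)}$ is defined (in whichever component of $\su(2)^3_\star$ contains $\sigma_O(\eta^+)$).

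Second I would show that $\sigma_O$ permutes the level sets of $f$: for every $p\in\su(2)^3_\star$ one has $\sigma_O\big(f^{-1}(f(p))\big)=f^{-1}\big(f(\sigma_O(p))\big)$, the preimages being taken inside $\su(2)^3_\star$. The key point is that the five Casimirs encode the Gram matrix $G(p)=\big(\langle a_i,a_j\rangle\big)_{1\le i,j\le 3}$ exactly modulo scalar matrices: $f(p)=f(q)$ if and only if $G(p)-G(q)$ is a real multiple of $I_3$, since $f_1,f_2,f_3$ force $G(p)-G(q)$ to be diagonal and $f_4,f_5$ force its diagonal entries to agree. Since $G(\sigma_O(p))=O^\top G(p)\,O$ and conjugation by $O\in\operatorname{O}(3)$ is linear and fixes $\R\cdot I_3$ pointwise, $f(p)=f(q)$ implies $f(\sigma_O(p))=f(\sigma_O(q))$; applying the same to $O^{-1}$ gives the converse, and the equality of level sets then follows because $\sigma_O$ is a bijection of $\su(2)^3_\star$. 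I expect this to be the only step requiring any thought, and even it is short.

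Finally I would combine the two ingredients. Using $N_{\eta^+}=f^{-1}(f(\eta^+))\cap\su(2)^3_+$ and the injectivity of $\sigma_O$,
\[
\sigma_O(N_{\eta^+})=\sigma_O\big(f^{-1}(f(\eta^+))\big)\cap\sigma_O\big(\su(2)^3_+\big)=f^{-1}\big(f(\sigma_O(\eta^+))\big)\cap\su(2)^3_{\varepsilon}=N_{\sigma_O(\eta^+)},
\]
where $\su(2)^3_{\varepsilon}$ denotes the component of $\su(2)^3_\star$ containing $\sigma_O(\eta^+)$, as identified in the first step. Since $N_{\eta^+}$ and $N_{\sigma_O(\eta^+)}$ are embedded submanifolds of $\su(2)^3_\star$ and $\sigma_O$ is a diffeomorphism of $\su(2)^3_\star$ carrying the first onto the second, the restriction $\sigma_O\colon N_{\eta^+}\to N_{\sigma_O(\eta^+)}$ is a diffeomorphism, with inverse $\sigma_{O^{-1}}|_{N_{\sigma_O(\eta^+)}}$. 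The argument for a point $\eta^-\in\su(2)^3_-$ is identical, word for word.
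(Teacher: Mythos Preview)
Your proof is correct and is precisely the ``direct computation'' the paper alludes to without writing out: verifying $\Phi\circ\sigma_O=\det(O)\,\Phi$ and that $\sigma_O$ permutes the level sets of $f$. Your Gram-matrix formulation, namely that $f(p)=f(q)$ iff $G(p)-G(q)\in\R\cdot I_3$ and $G(\sigma_O(p))=O^\top G(p)\,O$, is a clean way to package the Casimir bookkeeping and is exactly what a coordinate computation would yield.
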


Moreover, we get a correspondence between the level set in $\su(2)^3_+$ and in $\su(2)^3_-$ via the map $\iota$ as defined  in Equation \eqref{Eqt:iota}.
\begin{propn}\label{prop:iota}
For any $\eta^+\in\su(2)^3_+$, the map $\iota$, given by 
\begin{equation*}
\begin{aligned}
\iota: N_{\eta^+} &\rightarrow  N_{-\eta^+} \\
 (a,b,c)&\mapsto (-a,-b,-c).
\end{aligned}  
\end{equation*}
is a diffeomorphism. 
\end{propn}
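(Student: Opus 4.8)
The plan is to verify the three defining conditions of the level set $N_{-\eta^+}$ directly, since all of them are quadratic (or trilinear) in the arguments and transform predictably under the sign flip $\iota(a,b,c) = (-a,-b,-c)$. Concretely, for any $(a_1,a_2,a_3) \in N_{\eta^+}$, I would compute the Casimir values at $\iota(a_1,a_2,a_3) = (-a_1,-a_2,-a_3)$. The bilinear pairings satisfy $\langle -a_i, -a_j\rangle = \langle a_i,a_j\rangle$, so all of $f_1,\dots,f_5$ are $\iota$-invariant; in particular $\langle -a_i,-a_j\rangle = 0$ for $i\neq j$, and the differences $\langle -a_1,-a_1\rangle - \langle -a_2,-a_2\rangle$ and $\langle -a_2,-a_2\rangle - \langle -a_3,-a_3\rangle$ agree with those of $(a_1,a_2,a_3)$, hence equal $f_4(\eta^+)$ and $f_5(\eta^+)$ respectively — which are by definition the Casimir values defining $N_{-\eta^+}$ (note $f(\eta^+) = f(-\eta^+)$). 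The only condition that changes sign is $\Phi$: since $\Phi$ is trilinear, $\Phi(-a_1,-a_2,-a_3) = (-1)^3\Phi(a_1,a_2,a_3) = -\Phi(a_1,a_2,a_3)$, so $\Phi(\eta^+) > 0$ forces $\Phi(\iota(a_1,a_2,a_3)) < 0$, placing the image in $\su(2)^3_-$. Therefore $\iota$ maps $N_{\eta^+}$ into $f^{-1}(f(-\eta^+)) \cap \su(2)^3_- = N_{-\eta^+}$.

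Next I would note that $\iota$ is the restriction of the global linear involution $-\mathrm{id}_{\su(2)^3}$, which is a diffeomorphism of $\su(2)^3$ (indeed an involution: $\iota \circ \iota = \mathrm{id}$). Since $N_{\eta^+}$ and $N_{-\eta^+}$ are embedded submanifolds (by the Proposition asserting each level set is a closed embedded submanifold), and $\iota$ restricts to a smooth bijection $N_{\eta^+} \to N_{-\eta^+}$ with smooth inverse given by $\iota|_{N_{-\eta^+}}$ (applying the same computation with the roles of $\pm\eta^+$ swapped), it follows that $\iota\colon N_{\eta^+} \to N_{-\eta^+}$ is a diffeomorphism. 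The case starting from $\eta^-$ is identical by symmetry, or simply follows from applying what we have just proved with $\eta^+$ replaced by $-\eta^-$.

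There is essentially no obstacle here: the argument is a bookkeeping check that each of the finitely many defining equations behaves correctly under a linear sign change, combined with the elementary observation that a linear involution restricts to a diffeomorphism between the submanifolds it swaps. The one point deserving a sentence of care is confirming that $\iota$ genuinely lands in the correct connected component — this is where the odd trilinearity of $\Phi$ (as opposed to the even bilinearity of the $f_i$) is the decisive feature, and it is also what distinguishes this statement from $\sigma_O$ for $O \in \SO(3)$, which preserves each component. (For completeness one could remark that $\iota = \sigma_{-\mathrm{id}_{\mathbb{R}^3}}$ is not of this form, since $\det(-\mathrm{id}_{\mathbb{R}^3}) = -1$, consistent with the orientation-reversing behaviour recorded in Corollary~\ref{prop:iota-anti}.)
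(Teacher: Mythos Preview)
Your argument is correct and matches the paper's approach: the paper states this result without an explicit proof, treating it as an immediate consequence of the preceding Proposition~\ref{prop:so3action} (valid for all $O\in\operatorname{O}(3)$) applied to $O=-\mathrm{id}$, which is exactly the direct computation you have written out. Your observation that the bilinear Casimirs $f_1,\dots,f_5$ are $\iota$-invariant while the trilinear $\Phi$ flips sign is precisely the content of that specialization.
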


For any level set $N_{\eta^+}$ and $N_{\eta^-}$, let $\overline{N}_{\eta^+}$ and $\overline{N}_{\eta^-}$ be their closure in $\su(2)^3.$ Equivalently, $\overline{N}_{\eta^+}$ represent the intersection of the level set $f^{-1}(f(\eta^+))$ and $\{(a_1,a_2,a_3)\in\su(2)^3|\Phi(a_1,a_2,a_3) \geq 0\}.$ 

Our goal is to transform any arbitrary level set $N=N_{\eta^+}$ or $N_{\eta^-}$ into the form $N_{q,r}$, which implies connectedness. The idea is to look at elements in $\overline{N}\setminus N.$ We would like to find an $O\in \operatorname{O}(3)$ which transform such an element into an element in $\overline{N}_{q,r}\setminus N_{q,r}$, for some $q,r.$ Then the same $O$ would transform $N$ into $N_{q,r}.$ The first step is to see that $\overline{N}\setminus N$ is nonempty, and for this we recall a lemma of Xu.


\begin{lemma}[\cite{Xu}, Proposition 5.2]
For any $\zeta^+ \in \su(2)^3_+,$ let $\psi_t(\zeta^+)$ be the flow of the vector field $V_0,$ as in Equations \eqref{Eqt:global-frame}. Then 
\begin{itemize}
    \item either $\psi_t(\zeta^+)$ converges to some critical point of $\Phi$, as $t \rightarrow -\infty,$
    \item or there exists $D< 0$ such that $\Phi(\psi_D(\zeta^+))=0$ and $\Phi(\psi_t(\zeta^+))>0$ for $t \in (D,0].$ 
\end{itemize} 
The same result holds for any $\zeta^- \in \su(2)^3_-,$ when we consider $t\rightarrow \infty$ and $D>0$ in the above 2 cases, respectively.
\end{lemma}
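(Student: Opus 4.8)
The plan is to analyze the flow $\psi_t$ of the vector field $V_0$ directly on $\su(2)^3_\star$, exploiting the fact that $\Phi$ is a Lyapunov-type function for this flow together with a compactness/escape dichotomy. First I would record how $\Phi$ varies along the flow: since $V_0|_{(a_1,a_2,a_3)} = ([a_2,a_3],[a_3,a_1],[a_1,a_2])$, I would compute $\frac{d}{dt}\Phi(\psi_t(\zeta^+)) = d\Phi(V_0)$ using $\Phi(a_1,a_2,a_3)=\langle a_1,[a_2,a_3]\rangle$, and the invariance of the Killing form under $\ad$. One expects (this is essentially in \cite{Xu}) that $d\Phi(V_0) = \langle [a_2,a_3],[a_2,a_3]\rangle + \langle [a_3,a_1],[a_3,a_1]\rangle + \langle [a_1,a_2],[a_1,a_2]\rangle \geq 0$, and vanishes exactly at the critical points of $\Phi$ restricted to the leaf (equivalently, where $V_0 = 0$). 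Hence $t\mapsto \Phi(\psi_t(\zeta^+))$ is monotonically nondecreasing, strictly increasing away from critical points, and in particular $\Phi(\psi_t(\zeta^+)) \in (0,\Phi(\zeta^+)]$ for all $t \leq 0$ as long as the flow exists and stays in $\su(2)^3_+$.

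Next I would run backward from $t=0$ and set $D := \inf\{t \leq 0 : \psi_s(\zeta^+) \text{ exists and } \Phi(\psi_s(\zeta^+)) > 0 \text{ for all } s\in(t,0]\}$. Two cases arise. If $D = -\infty$, then $\psi_t(\zeta^+)$ is defined for all $t \leq 0$ and remains in the set $\{0 < \Phi \leq \Phi(\zeta^+)\}$; I would then argue this forward-orbit-closure is bounded — using that the Casimir functions $f_1,\dots,f_5$ are constant along the flow (the $V_i$ lie in $\ker f_*$), so $\psi_t(\zeta^+)$ stays in a fixed level set of $f$, and on that level set $\Phi$ bounded below by $0$ forces boundedness of $\|a_i\|$ (from the explicit description of level sets, the norms are controlled by the Casimirs once $\Phi$ stays away from the degenerate locus — one must check the relevant level set portion with $\Phi \in [\varepsilon, \Phi(\zeta^+)]$ is actually compact, or use that $\Phi \to 0$ on the unbounded/degenerate ends). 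Boundedness plus the monotone convergence $\Phi(\psi_t(\zeta^+)) \to \ell := \inf_{t\le 0}\Phi(\psi_t) \geq 0$ as $t\to-\infty$ gives, via a standard LaSalle-type argument, an $\omega$-limit point $p$ (for the backward flow) at which $V_0 = 0$, i.e. a critical point of $\Phi$; one then checks $\psi_t(\zeta^+)$ actually converges to $p$ (not merely subconverges) — here I would use that the critical set is "nice" enough, e.g. that $\Phi$ near $p$ together with monotonicity pins down the limit, or cite the corresponding argument in \cite{Xu}. If instead $D > -\infty$, then by definition $\Phi(\psi_t(\zeta^+)) > 0$ for $t\in(D,0]$, and either the flow leaves $\su(2)^3_\star$ at $D$ — which, since the level set of $f$ is closed in $\su(2)^3$ and the $a_i$ stay bounded (same argument as above, as $\Phi > 0$ on $(D,0]$ keeps us in the bounded region), forces $\psi_D(\zeta^+)$ to exist in $\su(2)^3$ with $\Phi(\psi_D(\zeta^+)) = 0$ — or the flow stays in $\su(2)^3_\star$ up to $D$ but $\Phi(\psi_D(\zeta^+)) = 0$ by continuity and minimality of $D$; either way we land in the second bullet.

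The case $\zeta^- \in \su(2)^3_-$ is handled identically after reversing time: apply the above to the flow $\psi_{-t}$, or equivalently use the map $\iota$ of Corollary \ref{prop:iota-anti} (which sends $\su(2)^3_-$ to $\su(2)^3_+$ and, being essentially $-\mathrm{id}$, conjugates $V_0$ appropriately) to reduce to the positive case. The main obstacle I anticipate is the boundedness/compactness step — showing that an orbit segment on which $\Phi$ is bounded away from $0$ and above by $\Phi(\zeta^+)$ stays in a compact subset of $\su(2)^3_\star$; this requires a genuine look at the geometry of the level sets $f^{-1}(c)$ near their degenerate boundary $\{\Phi = 0\}$ and at infinity, to rule out the orbit escaping to $\|a_i\| \to \infty$ while $\Phi$ stays positive, and to rule out convergence of $\Phi$ to a positive value without convergence of the orbit itself. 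Everything else (the monotonicity computation, the LaSalle argument, the time-reversal for $\zeta^-$) is routine once that is in place.
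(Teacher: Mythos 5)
The paper gives no proof of this lemma at all --- it is quoted from \cite{Xu} --- so there is nothing internal to compare against; judged on its own, your plan is the right one and is essentially the expected argument. With the paper's (positive-definite) convention for $\langle-,-\rangle$ on $\su(2)$, the vector field $V_0$ is exactly the Euclidean gradient of $\Phi$ on $\su(2)^3\cong\R^9$, your identity $\frac{d}{dt}\Phi(\psi_t)=\langle[a_2,a_3],[a_2,a_3]\rangle+\langle[a_3,a_1],[a_3,a_1]\rangle+\langle[a_1,a_2],[a_1,a_2]\rangle\ge 0$ is correct, and this quantity vanishes precisely where $V_0=0$, i.e.\ at the critical points of $\Phi$ (collinear triples). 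Your escape-time dichotomy and the reduction of the $\zeta^-$ case by time reversal or by $\iota$ are fine.

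Two remarks on the points you flagged. (i) The compactness step closes easily, and without circularity: the Casimirs are constant along the flow, and on a level set of $f$ the Gram matrix of $(a_1,a_2,a_3)$ has fixed off-diagonal entries while its diagonal entries differ by constants; writing $z=\langle a_3,a_3\rangle$, the determinant of the Gram matrix equals $\Phi^2$ and is $z^3+O(z^2)$, so $\Phi^2\to\infty$ as the point escapes to infinity in the level set. Hence $\{0\le\Phi\le\Phi(\zeta^+)\}\cap f^{-1}(f(\zeta^+))$ is compact, which both rules out finite-time blow-up of the backward flow and forces $\Phi(\psi_D(\zeta^+))=0$ when $D$ is finite. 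Do not invoke the identification of level sets with the models $N_{q,r}$ here: in this paper that identification is deduced from the present lemma, so using it would be circular. (ii) The convergence step is the one place where your proposed fix is not enough: LaSalle plus monotonicity of $\Phi$ only gives a connected $\alpha$-limit set consisting of critical points, and since the critical set consists of collinear triples it contains positive-dimensional families (spheres' worth of directions), so ``monotonicity pins down the limit'' does not by itself exclude an orbit accumulating on a continuum of critical points without converging. What saves the statement is that $\Phi$ is a cubic polynomial, hence real-analytic, so the \L ojasiewicz gradient inequality applies to the precompact gradient trajectory and yields convergence to a single critical point (alternatively, follow Xu's explicit analysis of the flow). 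With (i) and (ii) made explicit, your outline is a complete proof.
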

Moreover, one verifies directly that $f(\psi_t)$ is constant with respect to $t$. Thus the flow always stays in the same level set of $f$ as it starts from. Therefore, from any point in $N=N_{\eta^+}$ or $N_{\eta^-}$, by following the flow, one can always reach a limit point in $\overline{N}\setminus N = \overline{N}\cap \Phi^{-1}(0).$ To conclude,  
\begin{cor}\label{prop:iota}
For any level set $N=N_{\eta^+}$ or $N_{\eta^-}$, we have that $\overline{N}\setminus N\neq \emptyset.$ Moreover, for any $N_{\eta^+}$, there exists a point $\mu\in\Phi^{-1}(0)$ such that  ${N}_{\eta^+}=f^{-1}(f(\mu))\cap \su(2)^3_+$. Similarly for any $N_{\eta^-}.$
\end{cor}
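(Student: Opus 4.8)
The plan is to start from a point of $N=N_{\eta^+}$, follow the flow $\psi_t$ of the vector field $V_0$ backward in time, and invoke the lemma of Xu quoted just above to produce the required point $\mu$ on $\Phi^{-1}(0)$; the analogous statement for $N_{\eta^-}$ comes from running the flow forward in time.

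First I would record the elementary observation that every critical point of $\Phi$ lies in $\Phi^{-1}(0)$. Indeed, identifying $T\su(2)^3$ with $\su(2)^3$ via the Killing form and using its invariance, the gradient of $\Phi$ at $(a_1,a_2,a_3)$ is exactly $V_0|_{(a_1,a_2,a_3)}=([a_2,a_3],[a_3,a_1],[a_1,a_2])$, so at a critical point one has $[a_2,a_3]=0$ and hence $\Phi=\langle a_1,[a_2,a_3]\rangle=0$. In particular the limit point produced by the first alternative of Xu's lemma already sits on $\Phi^{-1}(0)$.

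Next, fix $\eta^+\in N_{\eta^+}$ and consider $\psi_t(\eta^+)$ for $t\leq 0$. By Xu's lemma there are two cases. If $\psi_t(\eta^+)$ converges to a critical point $\mu$ of $\Phi$ as $t\to-\infty$, then, since the flow never meets $\Phi^{-1}(0)$ for $t<0$, it stays in the connected component $\su(2)^3_+$; thus $\psi_t(\eta^+)\in N_{\eta^+}$ for all $t\leq 0$ and $\mu\in\overline{N}_{\eta^+}$, while by the previous paragraph $\Phi(\mu)=0$, so $\mu\notin N_{\eta^+}$. If instead there is $D<0$ with $\Phi(\psi_D(\eta^+))=0$ and $\Phi(\psi_t(\eta^+))>0$ on $(D,0]$, set $\mu:=\psi_D(\eta^+)$; then $\mu=\lim_{t\to D^+}\psi_t(\eta^+)\in\overline{N}_{\eta^+}$ with $\Phi(\mu)=0$, so again $\mu\in\overline{N}_{\eta^+}\setminus N_{\eta^+}$. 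Either way $\overline{N}_{\eta^+}\setminus N_{\eta^+}\neq\emptyset$.

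Finally, since $f\circ\psi_t$ is constant in $t$ and $f$ is continuous, $f(\mu)=f(\eta^+)$ in both cases, whence $N_{\eta^+}=f^{-1}(f(\eta^+))\cap\su(2)^3_+=f^{-1}(f(\mu))\cap\su(2)^3_+$ by Theorem \ref{Thm:leaf}. The case of $N_{\eta^-}$ is identical, running the flow forward with $t\to+\infty$ and $D>0$ as in the ``same result'' clause of Xu's lemma. There is no serious obstacle here: the only points needing care are the remark that critical points of $\Phi$ lie on $\Phi^{-1}(0)$, and the bookkeeping that the flow remains inside $\su(2)^3_+$ until (and only until) it possibly reaches $\Phi^{-1}(0)$, which is precisely the content of the dichotomy in Xu's lemma.
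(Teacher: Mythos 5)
Your proof is correct and follows essentially the same route as the paper: flow along $V_0$, invoke Xu's dichotomy lemma, and use that $f$ is constant along the flow so the limit point lies in $\overline{N}\cap\Phi^{-1}(0)$ with the same $f$-value. The only addition is your explicit verification that critical points of $\Phi$ (the first alternative of the lemma) already lie in $\Phi^{-1}(0)$ because the gradient of $\Phi$ is $V_0$, a detail the paper leaves implicit.
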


We may transform any point in $\Phi^{-1}(0)$ to our desired form, due to the following elementary lemma.
\begin{lemma}\label{lem: transformO}
For any $\mu\in \su(2)^3$ with $\Phi(\mu)=0,$ there exists $O\in \operatorname{SO}(3)$ such that $\mu^{\prime\prime} = \sigma_O(\mu)$ is of the form $\mu^{\prime\prime} = (\mu^{\prime\prime}_1, \mu^{\prime\prime}_2, 0)$, where $\langle \mu^{\prime\prime}_1 , \mu^{\prime\prime}_2\rangle =0$ and $\langle \mu^{\prime\prime}_1 , \mu^{\prime\prime}_1\rangle \geq\langle \mu^{\prime\prime}_2 , \mu^{\prime\prime}_2\rangle $.
\end{lemma}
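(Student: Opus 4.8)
\textbf{Proof proposal for Lemma \ref{lem: transformO}.}

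The plan is to exploit the fact that $\Phi(\mu) = \langle \mu_1, [\mu_2, \mu_3]\rangle$ is (up to sign) the determinant of the $3\times 3$ matrix whose columns are $\mu_1, \mu_2, \mu_3$ after identifying $\su(2)\cong \R^3$ with its orientation-compatible inner product, so that $\Phi(\mu) = 0$ says exactly that $\mu_1, \mu_2, \mu_3$ are linearly dependent, i.e. they span a subspace $W \subseteq \R^3$ of dimension at most $2$. First I would observe that $\sigma_O$ acts on $(\mu_1, \mu_2, \mu_3)$ by right multiplication of the $3\times 3$ matrix $[\mu_1 \mid \mu_2 \mid \mu_3]$ by $O$; geometrically this replaces the ordered triple of vectors by $\R$-linear combinations of them governed by the columns of $O$, while leaving the span $W$ and all inner products among the resulting vectors controlled by the Gram matrix. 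So the statement is really a statement in linear algebra: given three vectors spanning a space of dimension $\le 2$, one can apply an orthogonal change of the "index" coordinates (an element of $\SO(3)$ acting on the right) so that the third vector becomes zero, the first two become orthogonal, and the first has norm at least that of the second.

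The key steps, in order, are: (1) Let $G = M^\top M$ be the Gram matrix of the triple, where $M = [\mu_1 \mid \mu_2 \mid \mu_3]$; then $G$ is a symmetric positive semidefinite $3\times 3$ matrix of rank $\le 2$ (rank $< 3$ precisely because $\det M = \pm\Phi(\mu) = 0$). (2) Diagonalize $G$ by the spectral theorem: there is $O_0 \in \SO(3)$ (we may arrange determinant $+1$ by possibly flipping the sign of an eigenvector) with $O_0^\top G O_0 = \operatorname{diag}(\lambda_1, \lambda_2, \lambda_3)$ where $\lambda_1 \ge \lambda_2 \ge \lambda_3 \ge 0$ and, since $\operatorname{rank} G \le 2$, necessarily $\lambda_3 = 0$. (3) Set $\mu'' = \sigma_{O_0}(\mu)$, i.e. $M'' = M O_0$; then the Gram matrix of $\mu''$ is $(MO_0)^\top(MO_0) = O_0^\top G O_0 = \operatorname{diag}(\lambda_1,\lambda_2,0)$. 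Reading off the entries: $\langle \mu_3'', \mu_3''\rangle = \lambda_3 = 0$ forces $\mu_3'' = 0$; the off-diagonal entry $\langle \mu_1'', \mu_2''\rangle = 0$; and $\langle \mu_1'', \mu_1''\rangle = \lambda_1 \ge \lambda_2 = \langle \mu_2'', \mu_2''\rangle$. Taking $O = O_0$ finishes the lemma.

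I expect the only mild subtlety — not really an obstacle — to be the bookkeeping of $\SO(3)$ versus $\Or(3)$: the spectral theorem a priori gives an orthogonal diagonalizing matrix in $\Or(3)$, and one must note that replacing one eigenvector by its negative (which preserves the diagonalization since eigenvectors are determined only up to sign, and any genuinely distinct eigenvalue still has a one-dimensional eigenline) lets us choose the representative in $\SO(3)$; if two eigenvalues coincide there is even more freedom. A second point worth a sentence is justifying that $\sigma_O$ with $O\in\SO(3)$ indeed corresponds to right multiplication $M \mapsto MO$ of the coordinate matrix and hence conjugates the Gram matrix as claimed — this is immediate from the definition \eqref{Eqt:DefsigmaO} once one fixes the identification $\su(2)\cong\R^3$ used throughout (the same one as in Proposition \ref{prop: Nqr-connected}). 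Everything else is the routine linear algebra above.
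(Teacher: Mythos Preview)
Your proof is correct, but it takes a genuinely different (and in fact cleaner) route than the paper. The paper proceeds in three explicit stages: first it uses the linear dependence to write one vector, say $\mu_3$, as a combination $y_1\mu_1+y_2\mu_2$, and exhibits a concrete $O_1\in\SO(3)$ whose third column is proportional to $(y_1,y_2,-1)^T$ so that $\sigma_{O_1}(\mu)=(\mu_1',\mu_2',0)$; second, it applies a planar rotation $O_2$ in the $(1,2)$--block and solves a trigonometric equation to force $\langle\mu_1'',\mu_2''\rangle=0$; third, if the norm inequality fails it swaps the first two entries by a further $O_3\in\SO(3)$. Your approach compresses all three steps into a single application of the spectral theorem to the Gram matrix $G=M^\top M$: the diagonalizing $O_0\in\SO(3)$ simultaneously kills the third vector, orthogonalizes the first two, and orders the norms. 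This is more conceptual and handles all degenerate cases (e.g.\ $\mu_1,\mu_2$ themselves dependent) uniformly, whereas the paper's ``without loss of generality $\mu_3=y_1\mu_1+y_2\mu_2$'' tacitly requires a preliminary permutation in some cases. The paper's version, on the other hand, is more explicitly constructive: one can read off closed formulas for the rotations at each stage.
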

\begin{proof}
Write $\mu= (\mu_1,\mu_2,\mu_3)$. Then $\Phi(\mu)=0$ implies that $\{\mu_1, \mu_2, \mu_3\}$ does not define a basis in $\su(2)$, and we assume, without loss of generality, that $\mu_3=y_1\mu_1+y_2\mu_2$. There exists a rotation matrix $O_1\in \SO(3)$ with the third column $\frac{1}{\sqrt{y_1^2+y_2^2+1}}(y_1, y_2, -1)^T,$ such that $(\mu_1, \mu_2, \mu_3)O_1 = (\mu_1^{\prime}, \mu_2^{\prime},0)$ for some $\mu_1^{\prime}, \mu_2^{\prime} \in \su(2).$ 

Assume that $(\mu_1^{\prime\prime},\mu_2^{\prime\prime},0) = (\mu_1^{\prime}, \mu_2^{\prime},0)O_2, $ where $O_2$ is of the form
\begin{equation*}
\begin{bmatrix}
cos(\theta) & sin(\theta) & 0\\
-sin(\theta) & cos(\theta) & 0\\
0 & 0 & 1
\end{bmatrix}.
\end{equation*}
Then we have
 $$\langle \mu_1^{\prime\prime},\mu_2^{\prime\prime}\rangle = \frac{sin(2\theta)}{2}(\langle \mu_1^{\prime},\mu_1^{\prime}\rangle-\langle \mu_2^{\prime},\mu_2^{\prime}\rangle)+cos(2\theta)\langle \mu_1^{\prime},\mu_2^{\prime} \rangle. $$
So there exists $\theta$ such that $\langle \mu_1^{\prime\prime},\mu_2^{\prime\prime}\rangle =0.$
We may assume that $\langle \mu_1^{\prime\prime},\mu_1^{\prime\prime}\rangle \geq \langle \mu_2^{\prime\prime},\mu_2^{\prime\prime}\rangle.$ Since if not, we can always take $O_3$ such that $(\mu_1^{\prime\prime},\mu_2^{\prime\prime},0)O_3=(\mu_2^{\prime\prime},\mu_1^{\prime\prime},0)$.
\end{proof}

\begin{propn}\label{prop:transfer-Nqr}
For any  $\eta^-\in \su(2)^3_{-}$, let $N_{\eta^-}$ be the corresponding level set. There exists $O \in \operatorname{SO}(3)$, such that $\sigma_O$ restricts to a diffeomorphism between $N_{\eta^-}$ and $N_{q,r}.$ 

Moreover, the result is still valid if we replace $\eta^-$ by $\eta^+\in \su(2)^3_+$ and consider some $O^{\prime}\in \operatorname{O}(3)$ with $\rm det(O^{\prime})=-1.$
\end{propn}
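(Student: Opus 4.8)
The plan is to reduce \autoref{prop:transfer-Nqr} to the two elementary facts already assembled in this subsection: (i) every level set $N_{\eta^-}$ shares its Casimir values with some boundary point $\mu \in \Phi^{-1}(0)$ (Corollary after Lemma 3.9), and (ii) such a boundary point can be normalized by an $\SO(3)$-rotation to the form $(\mu_1'',\mu_2'',0)$ with $\langle\mu_1'',\mu_2''\rangle = 0$ and $\langle\mu_1'',\mu_1''\rangle \geq \langle\mu_2'',\mu_2''\rangle$ (Lemma \ref{lem: transformO}).

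First I would fix $\eta^-\in\su(2)^3_-$ and apply the Corollary following Lemma 3.9 to obtain $\mu\in\Phi^{-1}(0)$ with $N_{\eta^-} = f^{-1}(f(\mu))\cap\su(2)^3_-$. Next I would apply Lemma \ref{lem: transformO} to get $O\in\SO(3)$ with $\sigma_O(\mu) = (\mu_1'',\mu_2'',0)$, $\langle\mu_1'',\mu_2''\rangle=0$, $\langle\mu_1'',\mu_1''\rangle\geq\langle\mu_2'',\mu_2''\rangle\geq 0$. Set $q := \langle\mu_1'',\mu_1''\rangle - \langle\mu_2'',\mu_2''\rangle \geq 0$ and $r := \langle\mu_2'',\mu_2''\rangle \geq 0$ (the third slot being $0$ forces $\langle\mu_3'',\mu_3''\rangle = 0$, so this is consistent with the defining equations of $N_{q,r}$). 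Since $f\circ\sigma_O = f$ on the relevant level sets by the $\SO(3)$-invariance built into the construction — concretely, $f_1,f_2,f_3$ and $f_4,f_5$ are defined from the inner products $\langle a_i,a_j\rangle$, which \autoref{lem:sym-Poisson} and the $\SO(3)$-invariance of the Killing form show are permuted/preserved appropriately, and in any case $\sigma_O$ maps level sets to level sets by \autoref{prop:so3action} — we get $\sigma_O(\mu) = \eta^-_{q,r}$ up to the same level set, hence $\sigma_O$ maps $f^{-1}(f(\mu))$ to $f^{-1}(f(\eta^-_{q,r}))$. Intersecting with $\su(2)^3_-$, which $\sigma_O$ preserves because $\Phi\circ\sigma_O = \Phi$ for $O\in\SO(3)$ (the triple product $\langle a_1,[a_2,a_3]\rangle$ is $\SO(3)$-invariant), \autoref{prop:so3action} gives that $\sigma_O\colon N_{\eta^-}\to N_{q,r}$ is a well-defined diffeomorphism.

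For the addendum with $\eta^+\in\su(2)^3_+$, I would first apply \autoref{prop:iota} to get a diffeomorphism $\iota\colon N_{\eta^+}\to N_{-\eta^+}$ with $-\eta^+\in\su(2)^3_-$, then apply the case just proved to get $O\in\SO(3)$ with $\sigma_O\colon N_{-\eta^+}\to N_{q,r}$ a diffeomorphism. The composite $\sigma_O\circ\iota = \sigma_{-O}$ (since $\iota = \sigma_{-\mathrm{id}}$ and $\sigma$ is a group anti/homomorphism on the relevant action) is then $\sigma_{O'}$ with $O' = -O$, and $\det(O') = (-1)^3\det(O) = -1$, so $O'\in\Or(3)\setminus\SO(3)$ does the job; $\sigma_{O'}\colon N_{\eta^+}\to N_{q,r}$ is a diffeomorphism as the composition of two diffeomorphisms.

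The main obstacle — really a bookkeeping point rather than a deep one — is verifying that the normalized boundary point genuinely lands in the \emph{same} level set as the standard representative $\eta^-_{q,r} = (e_1,\sqrt{q+1}\,e_2,-\sqrt{q+r+1}\,e_3)$, i.e. that all five Casimir values match and not merely $f_4,f_5$. One must check that after the rotation the off-diagonal inner products $f_1 = \langle a_1,a_2\rangle$, $f_2 = \langle a_2,a_3\rangle$, $f_3 = \langle a_3,a_1\rangle$ all vanish: $f_1 = 0$ is arranged by the choice of $\theta$ in \autoref{lem: transformO}, while $f_2 = f_3 = 0$ follow automatically from $\mu_3'' = 0$. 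Since $f^{-1}(f(\mu))$ contains points of $\su(2)^3_-$ and $N_{q,r}$ is \emph{defined} as exactly this kind of zero-off-diagonal, prescribed-$(f_4,f_5)$, negative-$\Phi$ locus, matching the values of $f$ is immediate once these vanishing statements are in hand, and then \autoref{prop:so3action} closes the argument.
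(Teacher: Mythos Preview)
Your proposal is correct and follows essentially the same route as the paper: pick a boundary point $\mu\in\overline{N}_{\eta^-}\setminus N_{\eta^-}$ via the corollary, rotate it into the normal form of Lemma~\ref{lem: transformO}, read off $q,r$, and invoke Proposition~\ref{prop:so3action} to conclude; the $\eta^+$ case is then handled by composing with $\iota=\sigma_{-\mathrm{id}}$. Your write-up is in fact more explicit than the paper's --- you spell out why the off-diagonal Casimirs $f_2,f_3$ vanish after rotation and compute $O'=-O$ directly --- but one phrasing to tighten: $f\circ\sigma_O=f$ is not literally true for general $O\in\SO(3)$ (the Gram matrix $(\langle a_i,a_j\rangle)$ transforms by conjugation, so the individual $f_i$ get mixed); the correct and sufficient statement, which you do fall back on, is simply that $\sigma_O$ carries level sets to level sets (Proposition~\ref{prop:so3action}).
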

\begin{proof}
We may only prove for $N_{\eta^-}$, by applying Corollary \ref{prop:iota}. Pick any point $\mu\in \overline{N}_{\eta^-}\setminus N_{\eta^-}.$ By Lemma \ref{lem: transformO}, there exists $O\in \SO(3)$ such that $\sigma_O(\mu)$ is of the form $\mu^{\prime\prime}$ as in the Lemma. Then there exists $q,r\geq 0$ such that $\mu^{\prime\prime}\in \overline{N}_{q,r}.$  Then by Proposition \ref{prop:so3action}, the restriction of $\sigma_O$ gives a diffeomorphism between $N_{\eta^-}$ and $N_{q,r}.$
\end{proof}

Finally, Proposition \ref{Prop:connected} follows from Proposition \ref{prop: Nqr-connected} and \ref{prop:transfer-Nqr}.

\subsection{Hypersymplectic structures on leaves}
We have obtained a description of the symplectic leaves of the hyper-Lie Poisson structure on $\su(2)^3_{\star}:$ the leaf through any $\eta^+\in \su(2)^3_+,$ or $\eta^-\in \su(2)^3_-$, is given by $N_{\eta^+}$, or $N_{\eta^-}$, respectively. Following Remark \ref{rem:PD&ND}, the metric on any $N_{\eta^-}$ is positive definite, and therefore hyperkähler. On the leaves $N_{\eta^+}$, we get hypersymplectic structures with negative definite metrics.

By Theorem \ref{thm:o3sym-gen}, each $\sigma_O$ preserves the hypersymplectic structures on leaves. Further, by Proposition \ref{prop:transfer-Nqr}, each leaf can be transformed into $N_{q,r},$ under some $\sigma_O$. Therefore, the leaves are all represented by special leaves $N_{q,r}$, up to (anti)-isomorphism of hypersymplectic manifolds. We conclude that

\begin{thm}\label{thm:Nqr}
Let $L$ be any leaf of $\su(2)^3_{\star}$ of the hyper-Lie Poisson structure. 
\begin{itemize}
 \item If the metric on $L$ is positive-definite, i.e., $L\subset \su(2)^3_{-}$, then there exists $q,r\geq 0$ and an isomorphism of hyperkähler structures between $L$ and $N_{q,r}$ (see Equation \eqref{Eqt:Nqr-def}).
 \item If the metric on $L$ is negative-definite, i.e., $L\subset \su(2)^3_{+}$, then there exists $q^{\prime},r^{\prime}$ and an anti-isomorphism between $L$ and  $N_{q^{\prime},r^{\prime}}$.
\end{itemize}
\end{thm}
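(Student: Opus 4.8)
The plan is to assemble Theorem~\ref{thm:Nqr} from the structural results already established, so that essentially nothing new needs to be computed. First I would recall that by Theorem~\ref{Thm:leaf}, any leaf $L$ of the hyper-Lie Poisson structure is of the form $N_{\eta^+}$ (if $L\subset\su(2)^3_+$) or $N_{\eta^-}$ (if $L\subset\su(2)^3_-$); by Remark~\ref{rem:PD&ND} these two cases are exactly the negative-definite and positive-definite cases, so the dichotomy in the statement is the same dichotomy as in the decomposition $\su(2)^3_\star = \su(2)^3_+ \sqcup \su(2)^3_-$. Thus it suffices to treat each of the two level-set families.

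For the positive-definite case $L = N_{\eta^-}$, Proposition~\ref{prop:transfer-Nqr} supplies an $O\in\SO(3)$ such that $\sigma_O$ restricts to a diffeomorphism $N_{\eta^-}\to N_{q,r}$ for suitable $q,r\geq 0$. By Theorem~\ref{thm:o3sym-gen}, since $\det O = 1$, this $\sigma_O$ is an isomorphism of hypersymplectic structures; and because the metric on $N_{\eta^-}$ is positive definite (Remark~\ref{rem:PD&ND}), this is an honest isomorphism of hyperkähler structures, which is the first bullet. For the negative-definite case $L = N_{\eta^+}$, the second part of Proposition~\ref{prop:transfer-Nqr} gives $O'\in\Or(3)$ with $\det O' = -1$ and $\sigma_{O'}\colon N_{\eta^+}\to N_{q',r'}$ a diffeomorphism; Theorem~\ref{thm:o3sym-gen} then says $\sigma_{O'}$ restricts to an anti-isomorphism of hypersymplectic structures, which is exactly the second bullet. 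One should note that the target $N_{q',r'}$ lies in $\su(2)^3_-$ by construction, so it carries the positive-definite (hyperkähler) metric, and the anti-isomorphism reverses the sign, consistent with $L$ being negative-definite.

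The only point requiring a little care is bookkeeping of orientations: in Proposition~\ref{prop:transfer-Nqr} the map taking $N_{\eta^+}$ to a standard leaf uses an orientation-reversing $O'$ (roughly, one first applies $\iota$ of Corollary~\ref{prop:iota-anti}, then an $\SO(3)$ rotation, and $\iota$ itself is orientation reversing on $\su(2)^3$ as it is $-\mathrm{id}$ in dimension $3$, hence $(-1)^3 = -1$). I would make explicit that composing the hypersymplectic (anti-)isomorphisms of Theorem~\ref{thm:o3sym-gen} along this chain yields a single $\Or(3)$-element whose determinant is the product of the determinants, landing in $\SO(3)$ in the positive-definite case and in the $\det = -1$ component in the negative-definite case. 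Since Propositions~\ref{prop: Nqr-connected} and~\ref{prop:transfer-Nqr} already package all of this, the proof is short.

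I do not expect a genuine obstacle here: the theorem is a corollary of Theorem~\ref{Thm:leaf}, Theorem~\ref{thm:o3sym-gen}, and Proposition~\ref{prop:transfer-Nqr}, with Remark~\ref{rem:PD&ND} supplying the definiteness. If anything, the mild subtlety is purely notational --- ensuring that ``anti-isomorphism of hypersymplectic structures'' in the negative-definite setting is correctly interpreted (per the Remark following Definition~\ref{Def:hypersymplectic} and the Remark after Proposition~1.5) as a diffeomorphism preserving the pseudo-metric up to sign and intertwining $I,J,K$ up to $\SO(3)$, so that the statement ``anti-isomorphism between $L$ and $N_{q',r'}$'' is unambiguous.
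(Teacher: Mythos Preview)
Your proposal is correct and matches the paper's own argument essentially line for line: the paper also derives Theorem~\ref{thm:Nqr} as an immediate consequence of Theorem~\ref{thm:o3sym-gen} and Proposition~\ref{prop:transfer-Nqr}, with Remark~\ref{rem:PD&ND} handling the definiteness dichotomy. The extra bookkeeping you mention about determinants under composition is sound but the paper leaves it implicit, since Proposition~\ref{prop:transfer-Nqr} already packages the $\SO(3)$ versus $\det=-1$ split.
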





This theorem suggests that any hypersymplectic leaf can be represented in the special form $N_{q,r}$. Consequently, studying the hyperkähler structures on $N_{q,r}$ is sufficient for understanding all the hypersymplectic leaves. In the following section, the convenience of $N_{q,r}$ will be evident, especially in calculating curvature.

\section{Sectional curvature of leaves, $N_{q,r}$}
In this section, we further study the hyperkähler metric on leaves $N_{q,r}$ of the hyper-Lie Poisson structure on $\su(2)^3_{\star}.$ According to Theorem \ref{thm:Nqr}, those special leaves represent all the hypersymplectic leaves up to (anti-)isomorphism. Note that one cannot distinguish leaves by looking at their smooth structure, since they are all diffeomorphic, due to Proposition \ref{prop: Nqr-connected}. However, as we will see, leaves admit different behavior on sectional curvatures.


Recall that the sectional curvature on a Riemannian manifold $(M,g)$, in the direction of $u_1,u_2 \in T_pM$ is given by
$$
\kappa(u_1,u_2) := \frac{g(R(u_1,u_2)u_2,u_1)}{g(u_1,u_1)g(u_2,u_2)-(g(u_1,u_2))^2},
$$
which depend only on the two-dimensional subspace $\operatorname{Span}\{v,w\} \subset T_pM.$ Here, $R$ stands for the curvature tensor of the Levi-Civita connection of $g.$ For our purpose, we regard $\kappa$ as a function on the second Grassmannian bundle of the tangent bundle $Gr_2(TM).$ Here is the main theorem in this section.

\begin{thm}\label{thm:firstmainresult}
For any $q,r \geq 0,$ let $g_{q,r}$ be the induced metric on $N_{q,r}$, and let $\kappa_{q,r}: Gr_2(TN_{q,r})\rightarrow \mathbb{R}$ be the sectional curvature of the Levi-Civita connection of $g_{q,r}.$
\begin{itemize}
    \item On $N_{0,0}$, the sectional curvature $\kappa_{0,0}$ vanishes identically.
    \item On $N_{q,0}$ with $q>0$, $\kappa_{q,0}$ is a nonconstant function bounded by $\frac{12}{\sqrt{q}}$.
    \item On $N_{q,r}$ with $r>0$, $\kappa_{q,r}$ is an unbounded function. 
\end{itemize}
\end{thm}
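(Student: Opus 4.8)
The plan is to compute the Levi-Civita connection and curvature tensor of $g_{q,r}$ in terms of the global frame $\{V_0,V_1,V_2,V_3\}$ provided by Equation \eqref{Eqt:global-frame}, exploiting the facts that $g(V_i,V_j)=-\delta_{ij}\Phi$ (Equation \eqref{Eqt:Xu-metric}) and that the $V_i$ are pairwise orthogonal with a common "norm function" $-\Phi$. Since on $N_{q,r}$ we have $\Phi<0$, we can work with $\phi:=-\Phi>0$, so $g(V_i,V_j)=\phi\,\delta_{ij}$ and $(N_{q,r},g)$ looks locally like a conformal rescaling of a flat frame; the Koszul formula then expresses $\nabla_{V_i}V_j$ entirely in terms of the functions $V_i(\phi)$ and the structure functions $c_{ij}^k$ defined by $[V_i,V_j]=\sum_k c_{ij}^k V_k$. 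First I would compute these Lie brackets explicitly on $\su(2)^3_\star$ (a direct computation using the Jacobi identity in $\su(2)$ and the definition of $\Phi$), and compute the derivatives $V_i(\phi)$; I expect the answer to involve only $\phi$ and the Casimir-type quantities $\langle a_i,a_i\rangle$, which on $N_{q,r}$ are themselves determined by $\phi$ together with $q,r$ via $\langle a_3,a_3\rangle = s^2$, $\langle a_2,a_2\rangle=s^2+r$, $\langle a_1,a_1\rangle=s^2+r+q$ and $\phi=|\langle a_1,[a_2,a_3]\rangle|$, which on the orthogonal locus equals the product of the three norms up to a constant, so $\phi$ is a function of $s$ alone.

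Having assembled $\nabla$, I would compute $R(V_i,V_j)V_k$ and extract the sectional curvatures $\kappa(V_i,V_j)$ and, more generally, $\kappa$ on arbitrary planes in $T_pN_{q,r}$, parametrized by a point of $Gr_2$. For the first bullet, the claim is that on $N_{0,0}$ the metric is flat: here $\langle a_1,a_1\rangle=\langle a_2,a_2\rangle=\langle a_3,a_3\rangle=s^2$, so $\phi=s^3$ up to a constant, and I would check that the resulting connection coefficients produce identically vanishing curvature — the cleanest route is to recognize $N_{0,0}$ (which by Section 5 is Kronheimer's nilpotent orbit of $\mathfrak{sl}(2,\mathbb{C})$, known to carry the flat Eguchi-Hanson-type degeneration / in fact the flat metric on $\mathbb{R}^4\setminus 0$ — more precisely $T^*\mathbb{CP}^1$'s nilpotent cone) and exhibit an explicit isometry with an open subset of flat $\mathbb{R}^4$, or simply verify $R\equiv 0$ from the connection formulas. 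For the second bullet, on $N_{q,0}$ with $q>0$ I would show $\kappa_{q,0}$ is a bounded non-constant function: boundedness should follow because $N_{q,0}$ is (by Section 5) Kronheimer's regular orbit, which is complete, and the curvature, being built from $\phi$ and its first two derivatives in $s$, decays as $s\to\infty$ and stays finite as $s\to 0^+$; the explicit bound $12/\sqrt{q}$ would come from estimating the extremal values of the curvature functions in $s$ — I would reduce $\kappa$ to a function of $s$ and of the plane direction, maximize over the direction (a finite-dimensional optimization giving a closed-form envelope), and then bound the resulting one-variable function of $s\in(0,\infty)$, with the $q$-dependence entering through the shift $\langle a_1,a_1\rangle=s^2+q$. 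Non-constancy is then immediate by evaluating at two values of $s$.

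For the third bullet, on $N_{q,r}$ with $r>0$ the point is \emph{unboundedness}: here $\langle a_1,a_1\rangle=s^2+r+q$, $\langle a_2,a_2\rangle=s^2+r$, $\langle a_3,a_3\rangle=s^2$ all stay bounded away from $0$ as $s\to 0^+$ except $\langle a_3,a_3\rangle\to 0$, so $\phi\to 0$ while $\langle a_1,a_1\rangle,\langle a_2,a_2\rangle$ remain $\geq r>0$; I expect some sectional curvature to blow up like a negative power of $s$ (equivalently of $\phi$) as $s\to 0^+$, precisely because $V_3|_{(a_1,a_2,a_3)}=([a_1,a_3],[a_2,a_3],0)$ degenerates relative to the others in a way that is absent when $r=0$. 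The plan is to isolate one specific plane — say $\operatorname{Span}\{V_3, \text{one of }V_1,V_2\}$ at points with $\langle a_3,a_3\rangle\to 0$ — compute $\kappa$ along it as an explicit rational function of $s$ with parameters $q,r$, and show the leading term diverges when $r>0$ (while it was finite when $r=0$ because the offending denominator factor cancels).

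The main obstacle will be organizing the connection and curvature computation cleanly enough to read off sharp bounds: the frame $\{V_i\}$ is global but not orthonormal and the structure functions $c_{ij}^k$ are genuinely nonzero and position-dependent, so the Koszul-formula bookkeeping is heavy, and extracting the \emph{exact} constant $12/\sqrt{q}$ (rather than just "bounded") requires carefully identifying which plane direction and which value of $s$ realize the supremum. I would mitigate this by first passing to the normalized orthonormal frame $W_i:=V_i/\sqrt{\phi}$, rescaling $s$, and checking $\rho_t$-equivariance (Theorem~\ref{thm:firstmainresult}'s companion statement, the scaling $\rho_t$) which forces $\kappa_{qt,rt}=t^{-1/2}\kappa_{q,r}$ up to the diffeomorphism — this immediately reduces the family to the two-parameter problem with, say, $q\in\{0,1\}$ and $r\in\{0,1\}$ as normalized representatives, turning "unbounded vs bounded by $12/\sqrt q$" into three finite computations.
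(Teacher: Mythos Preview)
Your overall strategy---compute the Lie brackets $[V_i,V_j]$, assemble $\nabla$ via Koszul, write the curvature as rational functions of $X=\langle a_1,a_1\rangle$, $Y=\langle a_2,a_2\rangle$, $Z=\langle a_3,a_3\rangle$, and then analyze the three cases by letting $Z\to 0^+$---is exactly what the paper does. Your diagnosis of the type-2 blow-up (one norm goes to zero while the other two stay $\geq r>0$) is correct, and the plane you propose works; the paper happens to use $\operatorname{Span}\{V_0,V_1\}$, but $\operatorname{Span}\{V_1,V_3\}$ or $\operatorname{Span}\{V_2,V_3\}$ give the same divergence since $R_{0110}$, $R_{1331}$, $R_{2332}$ all have numerator $\to X^2Y^2>0$ and denominator $\to 0$.

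The one substantive ingredient you are missing is the use of the \emph{K\"ahler symmetries} of the curvature tensor. Because $(N_{q,r},g)$ is K\"ahler for each of $I,J,K$, the identity $R(U,V,W,Q)=R(JU,JV,W,Q)$ (applied with $I,J,K$ from Equations \eqref{Eqt:leaf-I}--\eqref{Eqt:leaf-K}) collapses all $R_{ijkl}$ to just three independent quantities $R_{1221},R_{1331},R_{2332}$ and forces every ``three-distinct-indices'' component $R_{ijjk}$ to vanish. This is what makes the bookkeeping light: the paper shows there are exactly $48$ nonzero $R_{ijkl}$, each equal to $\pm R_{1221}$, $\pm R_{1331}$, or $\pm R_{2332}$. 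Without this reduction your Koszul computation would be much heavier, and your plan to pass to the orthonormal frame $W_i=V_i/\sqrt{\phi}$ does not by itself reveal these extra cancellations.

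This also bears on your remark about the constant $12/\sqrt{q}$. It is \emph{not} the supremum of $\kappa_{q,0}$ realized at some plane and some $s$; it is a crude combinatorial bound: on $N_{q,0}$ one has $Y=Z$, so each of the three basic components satisfies $|R_{ijkl}/\Phi^2|<q^{-1/2}$, and for arbitrary orthogonal $W_1=\sum b_iV_i$, $W_2=\sum c_iV_i$ one estimates
\[
|\kappa(W_1,W_2)|=\Bigl|\sum_{i,j,k,l}\frac{b_ib_lc_jc_k}{(\sum b_s^2)(\sum c_s^2)}\cdot\frac{R_{ijkl}}{\Phi^2}\Bigr|\le 48\cdot\tfrac14\cdot q^{-1/2}=\frac{12}{\sqrt{q}},
\]
using $|b_ib_l|\le\tfrac12\sum b_s^2$, $|c_jc_k|\le\tfrac12\sum c_s^2$, and the count of $48$ nonzero terms. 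So your proposed optimization over plane directions is unnecessary (and would yield a different, sharper constant, still proving the statement but not explaining the ``$12$''). Your scaling observation via $\rho_t$ is correct and pleasant but is not used in the paper's proof of this theorem.
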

Based on the different behaviors of the sectional curvatures on leaves, we define \textbf{the type of a leaf}. We say $N_{0,0}$ is of type $0$, any $N_{q,0}$ with $q>0$ is of type 1, and any $N_{q,r}$ with $r>0$ is of type 2. In this way, we divide all $N_{q,r}$ into 3 subfamilies. Since any 2 leaves with different types cannot be isometric, by Theorem \ref{thm:firstmainresult}, we get that
\begin{cor}
Any two leaves with different types cannot be isomorphic as hyperkähler manifolds.
\end{cor}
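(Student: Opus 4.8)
The plan is to deduce the corollary directly from Theorem \ref{thm:firstmainresult} by arguing that an isomorphism of hyperkähler structures is in particular an isometry, and that isometries preserve the collection of values of the sectional curvature function. More precisely, suppose $F\colon N_{q',r'}\to N_{q,r}$ is an isomorphism of hyperkähler structures. By the Proposition characterizing isomorphisms in terms of metric and complex structures, $F$ is an isometry $(N_{q',r'},g_{q',r'})\to (N_{q,r},g_{q,r})$ (the $\SO(3)$-rotation of the triple of symplectic forms does not affect the metric, since $g^\flat=\omega_3^\flat(\omega_1^\flat)^{-1}\omega_2^\flat$ is $\SO(3)$-invariant, as recorded in the discussion after Definition \ref{Def:hypersymplectic}). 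An isometry pulls back the Levi-Civita connection to the Levi-Civita connection and hence the curvature tensor to the curvature tensor; consequently $\kappa_{q',r'}(u_1,u_2)=\kappa_{q,r}(F_*u_1,F_*u_2)$ for all $p$ and all linearly independent $u_1,u_2\in T_pN_{q',r'}$. In particular the image of $\kappa_{q',r'}$ as a function on $Gr_2(TN_{q',r'})$ equals the image of $\kappa_{q,r}$ on $Gr_2(TN_{q,r})$, and likewise the boundedness or constancy of the sectional curvature is an invariant.

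Next I would use Theorem \ref{thm:firstmainresult} to distinguish the three types by these invariants. For a type-$0$ leaf the sectional curvature is identically $0$; for a type-$1$ leaf it is nonconstant but bounded; for a type-$2$ leaf it is unbounded. These three properties — "identically zero", "bounded and nonconstant", "unbounded" — are mutually exclusive. Hence a type-$0$ leaf cannot be isometric (a fortiori cannot be hyperkähler-isomorphic) to a type-$1$ or type-$2$ leaf, a type-$1$ leaf cannot be isometric to a type-$2$ leaf, and so on. That exhausts all pairs of distinct types and gives the corollary.

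There is essentially no obstacle here: the only point requiring a line of justification is that an isomorphism of hyperkähler structures is an isometry, which is exactly the content of the Proposition following Definition \ref{Def:hypersymplectic} (the rotation by $O\in\SO(3)$ is invisible to the metric). Everything else is the standard fact that isometries preserve sectional curvature. One subtlety worth a remark: Theorem \ref{thm:firstmainresult} is stated for the positive-definite (genuinely hyperkähler) representatives $N_{q,r}\subset\su(2)^3_{-}$, so the corollary as phrased concerns leaves carrying a positive-definite metric; by Theorem \ref{thm:Nqr} every such leaf is isomorphic to some $N_{q,r}$, and its type is well defined precisely because type is an isometry invariant. So the proof is: reduce to the $N_{q,r}$ models via Theorem \ref{thm:Nqr}, observe type is an isometry invariant, and invoke Theorem \ref{thm:firstmainresult} to see the three types have pairwise incompatible curvature behavior.

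\begin{proof}
Let $L$ and $L'$ be leaves of distinct types. By Theorem \ref{thm:Nqr} there are parameters $(q,r)$ and $(q',r')$ and isomorphisms of hyperkähler structures $L\cong N_{q,r}$, $L'\cong N_{q',r'}$, and $N_{q,r}$, $N_{q',r'}$ are of the same types as $L$, $L'$ respectively. Suppose, for contradiction, that $L$ and $L'$ were isomorphic as hyperkähler manifolds; composing, we obtain an isomorphism of hyperkähler structures $F\colon N_{q',r'}\to N_{q,r}$. By the Proposition following Definition \ref{Def:hypersymplectic}, $F$ is an isometry $(N_{q',r'},g_{q',r'})\to(N_{q,r},g_{q,r})$, because the $\SO(3)$-rotation relating the two triples of symplectic forms leaves the metric $g^\flat=\omega_3^\flat(\omega_1^\flat)^{-1}\omega_2^\flat$ unchanged. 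An isometry carries the Levi-Civita connection to the Levi-Civita connection and the curvature tensor to the curvature tensor, so
\begin{equation*}
\kappa_{q',r'}(u_1,u_2)=\kappa_{q,r}(F_*u_1,F_*u_2)
\end{equation*}
for every point and every pair of linearly independent tangent vectors $u_1,u_2$. Hence $\kappa_{q',r'}$ and $\kappa_{q,r}$ have the same image in $\mathbb{R}$; in particular one is identically zero, respectively bounded, respectively unbounded, if and only if the other is. But by Theorem \ref{thm:firstmainresult} the three types are characterized by $\kappa\equiv 0$ (type $0$), $\kappa$ bounded and nonconstant (type $1$), and $\kappa$ unbounded (type $2$), which are mutually exclusive. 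This contradicts the assumption that $L$ and $L'$ have different types.
\end{proof}
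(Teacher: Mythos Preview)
Your proof is correct and follows essentially the same approach as the paper: the paper simply observes that by Theorem~\ref{thm:firstmainresult} leaves of different types have mutually exclusive sectional-curvature behavior (identically zero, bounded nonconstant, unbounded), hence cannot be isometric, and therefore cannot be isomorphic as hyperkähler manifolds. Your write-up is more detailed (spelling out that a hyperkähler isomorphism is an isometry and that isometries preserve sectional curvature, and reducing via Theorem~\ref{thm:Nqr}), but the logic is the same.
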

\begin{remark}
The type of any leaf $N_{q,r}$ has another intepretation. Pick any element $\mu_{q,r} \in \overline{N}_{q,r}\setminus N_{q,r}$, and write $\mu_{q,r} = (\mu_1,\mu_2,\mu_3).$ One can show that the number $\rm dim(Span\{\mu_1,\mu_2,\mu_3\})$ depend only on $q,r$, and coincides with the type of $N_{q,r}.$
\end{remark}
The rest of this section is devoted to the proof of Theorem \ref{thm:firstmainresult}.
\subsection{Riemann curvature tensor on $N_{q,r}$}
Recall from Equations \eqref{Eqt:global-frame} that each leaf $N_{q,r}$ is a 4-dimensional manifold with global frame $\{V_0,V_1,V_2,V_3\}.$ Indeed, this frame is orthogonal, due to Equation \eqref{Eqt:Xu-metric}. In this subsection, we compute the curvature tensor of the Levi-Civita connection of $g_{q,r}$ on $N_{q,r}$, on this global frame.

For any point $(a_1,a_2,a_3)\in N_{q,r}$, we define $X=\langle a_1,a_1 \rangle$, $Y=\langle a_2,a_2 \rangle$ and $Z=\langle a_3,a_3 \rangle$ so that $X,Y,Z$ are three functions on $N_{q,r}$. The following formulae describe the Lie bracket of such vector fields. 

\begin{lemma}
On each leaf $N_{q,r}$, we have 
{\begin{equation*}
\begin{aligned}
[V_0,V_1]=\frac{YZ}{\Phi}V_1,\indent &
[V_0,V_2]=\frac{XZ}{\Phi}V_2,\\
[V_0,V_3]=\frac{XY}{\Phi}V_3, \indent & [V_1,V_2]=-\frac{XY}{\Phi}V_3,\\
[V_1,V_3]=\frac{XZ}{\Phi}V_2, \indent & [V_2,V_3]=-\frac{YZ}{\Phi}V_1.
\end{aligned}
\end{equation*}}
\end{lemma}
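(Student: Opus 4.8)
The plan is to compute each bracket $[V_i,V_j]$ directly from the explicit formulas for $V_0,\dots,V_3$ in Equations \eqref{Eqt:global-frame}, treating the $V_i$ as vector fields on the ambient space $\su(2)^3$ and then verifying the result is tangent to the leaf. First I would recall that on $\su(2)\cong\mathbb{R}^3$ we may identify the bracket $[\,\cdot\,,\cdot\,]$ with (a scalar multiple of) the cross product, so that the Jacobi-type identity $[a,[b,c]] = \langle a,c\rangle b - \langle a,b\rangle c$ holds; this is the one algebraic fact that drives every computation. I would also note that along $N_{q,r}$ we have $\langle a_i,a_j\rangle=0$ for $i\neq j$, so in all the contractions only the diagonal quantities $X=\langle a_1,a_1\rangle$, $Y=\langle a_2,a_2\rangle$, $Z=\langle a_3,a_3\rangle$ survive, and moreover $\Phi=\langle a_1,[a_2,a_3]\rangle$ is, up to sign, $\sqrt{XYZ}$ times a constant depending on orientation — though I would not actually need that last identity, only that $\Phi$ is the common normalizing function.

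Next I would carry out one representative bracket in detail, say $[V_0,V_1]$, and indicate that the rest are analogous. Writing $V_0=([a_2,a_3],[a_3,a_1],[a_1,a_2])$ and $V_1=(0,[a_2,a_1],[a_3,a_1])$ as maps $\su(2)^3\to\su(2)^3$, the bracket of vector fields is $[V_0,V_1] = DV_1(V_0) - DV_0(V_1)$, where $D$ denotes the (componentwise, bilinear) directional derivative. Each component is then a sum of double brackets of the $a_i$, which I expand using the Jacobi identity above; the cross terms involving $\langle a_i,a_j\rangle$ with $i\neq j$ drop out on the leaf, and what remains collapses to a scalar multiple of one of $V_1,V_2,V_3$, with the scalar being a product of two of $X,Y,Z$ divided by $\Phi$. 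I would then record that the same bookkeeping, with the roles of the indices permuted, yields the other five formulas, and that in every case the output lies in $\operatorname{Span}\{V_0,V_1,V_2,V_3\}=\mathcal D=TN_{q,r}$, confirming consistency with the fact that $N_{q,r}$ is an integral leaf.

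The main obstacle is purely organizational rather than conceptual: it is the careful bookkeeping of the nine double-bracket terms per component and the systematic cancellation of all off-diagonal inner products, keeping signs straight (the antisymmetry $[a,b]=-[b,a]$ interacts with the antisymmetry of the cross product in a way that is easy to mishandle). A secondary subtlety is that $X,Y,Z$ and $\Phi$ are non-constant functions on $N_{q,r}$, so strictly one should check that no derivatives of these functions appear — but since $V_i$ as written are polynomial (hence their derivatives are handled by the Leibniz rule already built into $DV_1(V_0)-DV_0(V_1)$), and since the functions $X,Y,Z,\Phi$ only enter the final answer through the algebra, this causes no genuine difficulty. I expect the whole proof to be a page of routine but attention-demanding computation.
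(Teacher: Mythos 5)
Your proposal is correct, and the computation it outlines does reproduce all six formulas (I checked representative cases: the Jacobian-difference $[V_i,V_j]=DV_j(V_i)-DV_i(V_j)$, expanded with the identity $[a,[b,c]]=\langle a,c\rangle b-\langle a,b\rangle c$ and the on-leaf orthogonality, collapses exactly to the stated multiples of the $V_k$). It is, however, a somewhat different route from the paper's. The paper does not do the raw double-bracket expansion: it introduces the infinitesimal adjoint-action fields $\hat{\xi}=-([\xi,a_1],[\xi,a_2],[\xi,a_3])$, invokes Xu's identity $[V_0,\hat{\xi}]=0$, and writes $V_1=a_{1j}\hat{e}_j$ in an orthonormal basis, so that $[V_0,V_1]=V_0(a_{1j})\hat{e}_j$ reduces to differentiating the coefficient functions --- a one-line $3\times 3$ determinant --- after which the on-leaf relation $a_2\times a_3=-\sqrt{YZ/X}\,a_1$ and $\Phi=-\sqrt{XYZ}$ finish the job. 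Your approach buys uniformity and self-containedness (no appeal to Xu's commutation lemma, the same mechanical expansion for all six brackets), at the cost of more bookkeeping; the paper's trick buys brevity for the brackets involving $V_0$ and handles the rest ``similarly.'' Two small caveats for your write-up: (i) you will in fact need the on-leaf proportionality relations $[a_2,a_3]=\tfrac{\Phi}{X}a_1$, $[a_3,a_1]=\tfrac{\Phi}{Y}a_2$, $[a_1,a_2]=\tfrac{\Phi}{Z}a_3$ to convert the leftover terms (e.g.\ $-Ya_3$, $Za_2$) back into multiples of the $V_k$ --- this is the same fact as the paper's $a_2\times a_3=-\sqrt{YZ/X}\,a_1$ in disguise, even though, as you say, the explicit formula $\Phi=-\sqrt{XYZ}$ is not needed; (ii) the BAC--CAB identity holds without an extra constant only if the inner product and the bracket are normalized compatibly (bracket $=$ cross product for an orthonormal basis), which is the convention the paper tacitly adopts, so you should fix it explicitly.
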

\begin{proof}
Pick any positively-oriented orthonormal basis $\{e_1,e_2,e_3\}$ of $\su(2).$ For any point $(a_1,a_2,a_3) \in N_{q,r}\subset \su(2)^3,$
write $a_i=a_{ij}e_j$ for $i=1,2,3$. For 
any $\xi\in\su(2)$, denote by $\hat{\xi}$ the vector field $-([\xi,a_1], [\xi,a_2],[\xi,a_3])$ on $N_{q,r}$. According to the property $[V_0,\hat{\xi}]=0$ for any $\xi\in\su(2)$,  as stated in \textcolor{blue}{Equation 19 of} \cite{Xu}, we obtain:
$$[V_0,V_1]=[V_0,a_{1j}\hat{e}_j]=V_0(a_{1j})\hat{e}_j=\left | \begin{matrix}
\hat{e}_1 &\hat{e}_2   & \hat{e}_3 \\
a_{21} &a_{22} & a_{23}  \\
a_{31} & a_{32} &a_{33} \\
\end{matrix} \right |. $$
Since $\langle a_i,a_j\rangle=0$ for $i\neq j$ and $\langle a_1,[a_2,a_3]\rangle <0$, we have that $a_2 \times a_3 = -\sqrt{\frac{\langle a_2,a_2 \rangle \langle a_3,a_3 \rangle}{\langle a_1,a_1 \rangle}} a_1,$ with the cross product on $\mathbb{R}^3\cong \su(2).$ Moreover, we have that $\Phi=-\sqrt{XYZ}$ on $N_{q,r}$, and therefore
$$[V_0,V_1]=-\sqrt{\frac{\langle a_2,a_2 \rangle \langle a_3,a_3 \rangle}{\langle a_1,a_1 \rangle}}V_1=\frac{YZ}{\Phi}V_1.$$
The remaining equations can be derived using  a similar  approach.
\end{proof}

Denote by $\nabla$ the Levi-Civita connection associated with the Riemannian metric $g:=g_{q,r}$ on $N_{q,r}.$ For any vector fields $U,V$ and $W$, recall the formula 
\begin{equation}
\begin{aligned}
g( \nabla_{U}V,W) = &\frac{1}{2} \big(Ug( V,W)  + V g( U,W )- W g( U,V ) \\
&+g(  W,[U,V] ) - g(  U,[V,W] ) - g(  V,[U,W] )\big).
\end{aligned}
\end{equation}

The following lemma follows from direct computations.

\begin{lemma}\label{prop:nabla-connection}
On each leaf $N_{q,r}$, we have
\begin{align*}
\nabla_{V_0}{V_0} = \frac{-X Y - Y Z - Z X}{2\Phi}V_0, \hspace{1em} &
\nabla_{V_0}{V_1} = \frac{-X Y - Y Z - Z X}{2\Phi}V_1,\\
\nabla_{V_0}{V_2} = \frac{-X Y - Y Z - Z X}{2\Phi}V_2, \hspace{1em} &
\nabla_{V_0}{V_3} = \frac{-X Y - Y Z - Z X}{2\Phi}V_3,\\
\nabla_{V_1}{V_1} = \frac{X Y - Y Z + Z X}{2\Phi}V_0,\hspace{1em} &
\nabla_{V_1}{V_2} = \frac{X Y - Y Z + Z X}{2\Phi}V_3,\\
\nabla_{V_1}{V_3} =\frac{-XY+YZ-ZX}{2\Phi}V_2,\hspace{1em} &
\nabla_{V_2}{V_2} = \frac{X Y + Y Z - Z X}{2\Phi}V_0,\\
\nabla_{V_2}{V_3}= \frac{X Y +Y Z - Z X}{2\Phi}V_1,\hspace{1em} &
\nabla_{V_3}{V_3} = \frac{-X Y +Y Z + Z X}{2\Phi}V_0.
\end{align*}
\end{lemma}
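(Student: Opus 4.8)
The plan is to use the Koszul formula recalled just above the statement, together with the orthogonal (but non-orthonormal) frame $\{V_0,V_1,V_2,V_3\}$ and the Lie bracket relations from the previous lemma. Since $g(V_i,V_j) = -\delta_{ij}\Phi$ with $\Phi = -\sqrt{XYZ}$ on $N_{q,r}$, the metric coefficients are not constant, so all three "derivative" terms $U g(V,W)$, $V g(U,W)$, $W g(U,V)$ in the Koszul formula will contribute; the key preliminary computation is to express the directional derivatives $V_i(\Phi)$, or equivalently $V_i(X)$, $V_i(Y)$, $V_i(Z)$, in terms of $X,Y,Z,\Phi$. First I would compute these: using $V_0|_{(a_1,a_2,a_3)} = ([a_2,a_3],[a_3,a_1],[a_1,a_2])$ one finds $V_0(X) = 2\langle a_1,[a_2,a_3]\rangle = 2\Phi$, and similarly $V_0(Y) = V_0(Z) = 2\Phi$, while $V_1(X) = 0$, $V_1(Y) = 2\langle a_2,[a_2,a_1]\rangle = 0$, $V_1(Z) = 2\langle a_3,[a_3,a_1]\rangle = 0$ — so in fact $V_1,V_2,V_3$ annihilate all of $X,Y,Z$ (this is consistent with $V_0$ being the "radial" direction and $V_1,V_2,V_3$ being tangent to the $\mathrm{SO}(3)$-orbits). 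Hence $V_0(\Phi) = -\frac{1}{2\sqrt{XYZ}}(YZ\cdot 2\Phi + XZ\cdot 2\Phi + XY\cdot 2\Phi) = \frac{XY+YZ+ZX}{\Phi}\cdot(-1)\cdot\frac{\Phi}{\sqrt{XYZ}}$; more cleanly, $V_0(\Phi) = \frac{XY+YZ+ZX}{-\Phi}\cdot \Phi/\ldots$ — I would just record $V_0(-\Phi) = g(V_0,V_0)$-derivative and carry the combination $XY+YZ+ZX$ symbolically. The upshot is $V_0$ is the only frame vector acting nontrivially on the metric coefficients, and $V_0(g(V_i,V_i)) = -V_0(\Phi)$, a quantity proportional to $(XY+YZ+ZX)/\Phi$ up to the factor that produces the $\tfrac12$ and the sign seen in the statement.

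With these derivative facts in hand, the computation reduces to plugging into Koszul. For $\nabla_{V_0}V_0$: only the term $-\tfrac12 W g(V_0,V_0)$ survives among the derivative terms (the first two coincide and cancel structurally, actually $\tfrac12(V_0 g(V_0,W) + V_0 g(V_0,W) - W g(V_0,V_0))$), and among the bracket terms $[V_0,V_0]=0$ and $g(V_0,[V_0,W])$ involves $[V_0,V_i] = \frac{(\cdot\cdot)}{\Phi}V_i$ which is orthogonal to $V_0$, hence contributes nothing when $W=V_0$ but must be tracked when computing $g(\nabla_{V_0}V_0, V_i)$. Writing $\nabla_{V_0}V_0 = \sum_k c_k V_k$ and pairing with each $V_j$, one solves $c_j g(V_j,V_j) = (\text{Koszul RHS})$; the only nonzero RHS occurs for $j=0$, giving the stated $\frac{-XY-YZ-ZX}{2\Phi}V_0$. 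For $\nabla_{V_0}V_1$: pairing with $V_1$ picks up $\tfrac12 V_0 g(V_1,V_1) = \tfrac12 V_0(-\Phi)$ plus bracket contributions $g(V_1,[V_0,V_1]) = \frac{YZ}{\Phi}g(V_1,V_1)$ and $-g(V_0,[V_1,V_1]) - g(V_1,[V_0,V_1])$ — careful bookkeeping of which bracket terms appear and with what sign is exactly where errors creep in, so I would organize this into a small table of the six Koszul terms for each of the ten pairs $(V_i,V_j)$ with $i\le j$. The remaining entries $\nabla_{V_i}V_j$ for $i,j\in\{1,2,3\}$ are the most laborious: here the derivative terms all vanish (since $V_1,V_2,V_3$ kill $X,Y,Z$, hence $\Phi$), so $g(\nabla_{V_i}V_j,V_k) = \tfrac12(g(V_k,[V_i,V_j]) - g(V_i,[V_j,V_k]) - g(V_j,[V_i,V_k]))$, a purely bracket-theoretic computation using the six bracket formulas; e.g. for $\nabla_{V_1}V_1$ only $-g(V_1,[V_1,V_k])$ type terms appear, and $[V_1,V_2]=-\frac{XY}{\Phi}V_3$, $[V_1,V_3]=\frac{XZ}{\Phi}V_2$, $[V_1,V_0]=-\frac{YZ}{\Phi}V_1$ feed in to give the $\frac{XY-YZ+ZX}{2\Phi}V_0$ stated. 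I would do two or three of these in full and assert the rest "by the same computation", since the structure is identical modulo permuting $X,Y,Z$ and the frame indices.

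The main obstacle is not conceptual but combinatorial: correctly handling signs in the Koszul formula — in particular the antisymmetry $[V_2,V_3]=-\frac{YZ}{\Phi}V_1$ versus $[V_1,V_3]=+\frac{XZ}{\Phi}V_2$, which breaks the naive symmetry one might expect and is the reason the nine listed Christoffel combinations have the specific sign patterns $\pm XY \mp YZ \pm ZX$ they do — and making sure the non-constancy of the metric (the $V_0(-\Phi)$ terms) is inserted in exactly the right places, namely only when $V_0$ is one of the three arguments. A useful sanity check I would run: the connection must be metric, so $\nabla_{V_0}g(V_i,V_j) = V_0 g(V_i,V_j) - g(\nabla_{V_0}V_i,V_j) - g(V_i,\nabla_{V_0}V_j)$ should vanish; given the proposed answer $\nabla_{V_0}V_i = \frac{-XY-YZ-ZX}{2\Phi}V_i$ for all $i$, this reads $V_0(-\Phi) = 2\cdot\frac{-XY-YZ-ZX}{2\Phi}\cdot(-\Phi) = \frac{XY+YZ+ZX}{\Phi}\cdot\Phi/\ldots$, i.e. $V_0(-\Phi) = -(XY+YZ+ZX)/\Phi \cdot \Phi$ — consistent precisely with $V_0(\Phi) = (XY+YZ+ZX)/\Phi$ wait, sign; in any case this identity is exactly the derivative computation from the first paragraph, so metric-compatibility and the derivative formula for $V_0(\Phi)$ are the same fact, which is reassuring. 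Similarly torsion-freeness $\nabla_{V_i}V_j - \nabla_{V_j}V_i = [V_i,V_j]$ provides a check on every off-diagonal entry (e.g. $\nabla_{V_1}V_2 - \nabla_{V_2}V_1 = \frac{XY-YZ+ZX}{2\Phi}V_3 - \frac{XY+YZ-ZX}{2\Phi}V_3 = \frac{-2YZ+2ZX}{2\Phi}V_3$ — hmm, this should equal $[V_1,V_2] = -\frac{XY}{\Phi}V_3$, so I will need to recheck whether the stated $\nabla_{V_2}V_1$ is read off correctly or whether there is a typo, and in any case torsion-freeness is the right diagnostic). I expect the honest writeup to be three pages of the Koszul formula applied sixteen times, presented as "a direct computation" with one or two representative cases spelled out.
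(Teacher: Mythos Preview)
Your approach is correct and is exactly what the paper does: it states that the lemma ``follows from direct computations'' using the Koszul formula recalled immediately above, together with the bracket relations and $g(V_i,V_j)=-\delta_{ij}\Phi$. Your observation that $V_1,V_2,V_3$ annihilate $X,Y,Z$ (hence $\Phi$), so that only $V_0$ contributes derivative terms, is the right organizing principle.

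One small point on your sanity check: the lemma lists only the ten entries $\nabla_{V_i}V_j$ with $i\le j$; the remaining six (including $\nabla_{V_2}V_1$) are not stated and are meant to be recovered \emph{from} torsion-freeness $\nabla_{V_j}V_i=\nabla_{V_i}V_j-[V_i,V_j]$. So your torsion ``check'' is circular as written --- the expression $\frac{XY+YZ-ZX}{2\Phi}V_3$ you plugged in for $\nabla_{V_2}V_1$ is not in the list (that coefficient belongs to $\nabla_{V_2}V_2$ and $\nabla_{V_2}V_3$). The correct value is $\nabla_{V_2}V_1=\nabla_{V_1}V_2-[V_1,V_2]=\frac{XY-YZ+ZX}{2\Phi}V_3+\frac{XY}{\Phi}V_3=\frac{3XY-YZ+ZX}{2\Phi}V_3$, and there is no inconsistency. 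Metric-compatibility along $V_0$ remains a genuine independent check, as you noted.
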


Let $R$ be the curvature tensor of $\nabla$. Using Lemma \ref{prop:nabla-connection}, we deduce that 
\begin{align}
R(V_1,V_2)V_2 &=  \frac{X^2(-2Y^2+YZ+Z^2)+X(Y^2Z-2YZ^2)+Y^2Z^2}{2XYZ}V_1 ,\label{Eqt:R122}\\
R(V_1,V_3)V_3 &=  \frac{X^2(Y^2+YZ-2Z^2)+X(-2Y^2Z+YZ^2)+Y^2Z^2}{2XYZ}V_1,\label{Eqt:R133} \\
R(V_2,V_3)V_3 &=  \frac{X^2(Y^2-2YZ+Z^2)+X(Y^2Z+YZ^2)-2Y^2Z^2}{2XYZ}V_2.\label{Eqt:R233}
\end{align}

Denote by $R_{ijkl}$ the function
$
g(R(V_i,V_j)V_k,V_l).
$ From Equation \eqref{Eqt:Xu-metric}, it is immediate that
\begin{align}
R_{1221} &= \frac{X^2(-2Y^2+YZ+Z^2)+X(Y^2Z-2YZ^2)+Y^2Z^2}{2\Phi} ,\label{Eqt:R1221}\\
R_{1331} &=  \frac{X^2(Y^2+YZ-2Z^2)+X(-2Y^2Z+YZ^2)+Y^2Z^2}{2\Phi},\label{Eqt:R1331} \\
R_{2332} &=  \frac{X^2(Y^2-2YZ+Z^2)+X(Y^2Z+YZ^2)-2Y^2Z^2}{2\Phi}.\label{Eqt:R2332}
\end{align}
We shall study all $R_{ijkl}$ by using the above formalae, as well as the property of curvature tensor.
In fact, all nonzero $R_{ijkl}$  can be deduced from $R_{1221}, R_{1331}$ and $R_{2332}$  by utilizing the symmetry property.

For any $i,j,k,l = 0,1,2,3,$ we know that 
\begin{equation}\label{Eqt:curv-sym-basic}
\begin{aligned}
 &R_{ijkl}=R_{klij}=-R_{jikl}=-R_{ijlk}\\
=&R_{jilk}=R_{lkji}=-R_{lkij}=-R_{klji}.   
\end{aligned}
\end{equation}
In particular, if $i=j$ or $k=l$, then $R_{ijkl} = 0.$
Further, we recall the following formulae for Kähler manifolds.
\begin{lemma} \cite{kahlerbook}\label{Lem:kahler-curvature}
If $(M,g,J)$ is Kähler, then 
\begin{equation}
R(U, V,W,Q)=R(JU,JV,W,Q)=R(U,V,JW,JQ),
\end{equation}
for any vector fields $U, V,W,Q$ on $M$.
\end{lemma}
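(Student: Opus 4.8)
The plan is to reduce everything to the single structural fact that on a Kähler manifold the complex structure is parallel with respect to the Levi-Civita connection, i.e. $\nabla J = 0$, equivalently $\nabla_X(JY) = J(\nabla_X Y)$ for all vector fields $X,Y$. This is one of the standard equivalent formulations of the Kähler condition (closedness of the Kähler form $\omega_J$ together with compatibility of $g$ and $J$), and it is what carries all the weight here. If one insisted on starting from the ``$d\omega_J=0$'' formulation, the first step would be to recall the classical derivation of $\nabla J=0$ from it; I would simply invoke it, consistently with the way the Kähler hypothesis is used elsewhere in the paper.

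Granting $\nabla J = 0$, the first observation is that the curvature operator commutes with $J$. Indeed, writing $R(U,V) = [\nabla_U,\nabla_V] - \nabla_{[U,V]}$ and noting that each covariant derivative $\nabla_{(-)}$ commutes with $J$ as a bundle endomorphism, the composite does too, so $R(U,V)(JW) = J\,R(U,V)(W)$. Combining this with the fact that $J$ is a $g$-isometry, $g(JA,JB) = g(A,B)$, gives
\begin{equation*}
R(U,V,JW,JQ) = g(R(U,V)JW,\,JQ) = g(J\,R(U,V)W,\,JQ) = g(R(U,V)W,\,Q) = R(U,V,W,Q),
\end{equation*}
which is the second claimed identity.

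For the first identity $R(JU,JV,W,Q) = R(U,V,W,Q)$, the plan is to feed the identity just proved into the pair symmetry $R(U,V,W,Q) = R(W,Q,U,V)$ of the Riemann tensor (part of \eqref{Eqt:curv-sym-basic}): applying the second identity with the two pairs swapped yields $R(W,Q,JU,JV) = R(W,Q,U,V)$, and then pair symmetry on both sides gives $R(JU,JV,W,Q) = R(U,V,W,Q)$. This completes the argument.

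There is essentially no real obstacle beyond having $\nabla J = 0$ available; the only point requiring care is the bookkeeping of sign and index conventions for $R$ and $R_{ijkl}$, so that the pair-symmetry step is applied in a way consistent with \eqref{Eqt:curv-sym-basic}. Since the statement is entirely classical, I would in practice keep the citation to \cite{kahlerbook} and include the three-line argument above only for the reader's convenience.
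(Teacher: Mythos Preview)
Your argument is correct and is exactly the standard proof: $\nabla J=0$ forces $R(U,V)$ to commute with $J$, compatibility $g(J\cdot,J\cdot)=g(\cdot,\cdot)$ then yields the second identity, and the pair symmetry $R(U,V,W,Q)=R(W,Q,U,V)$ transports it to the first. The paper itself does not give a proof of this lemma at all; it simply cites \cite{kahlerbook} and uses the result, so there is nothing to compare against, and your suggestion to retain the citation while optionally including the short argument is entirely appropriate.
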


Since $N_{q,r}$ is a hyperkähler structure, we know $g$ is Kähler with respect to three complex structures. Consequently, due to Lemma \ref{Lem:kahler-curvature}, the Riemann curvature tensor exhibits the following additional symmetric properties.
\begin{propn}
On the manifold $N_{q,r}$, we have
\begin{align}
R_{0101} &=  -R_{2301}=R_{2323},\label{Eqt:0101} \\
R_{0202} &= R_{1302}=R_{1313}, \label{Eqt:0202}\\
R_{0303} &= -R_{1203}=R_{1212}\label{Eqt:0303},
\end{align}
\begin{align}
R_{0112} &=  -R_{2312}=-R_{0103}=R_{2303}, \label{Eqt:R0112}\\
R_{0102} &= -R_{2302}=R_{0113}=-R_{2313}, \label{Eqt:R0102}\\
R_{0212} &= R_{1312}=-R_{1303}=-R_{0203}.\label{Eqt:R0212}
\end{align}
\end{propn}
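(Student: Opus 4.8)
The plan is to derive each of the claimed identities purely formally from two inputs: the basic algebraic symmetries of the Riemann tensor in Equation \eqref{Eqt:curv-sym-basic}, and the K\"ahler identities of Lemma \ref{Lem:kahler-curvature} applied to the three complex structures $I$, $J$, $K$ whose action on the global frame $\{V_0,V_1,V_2,V_3\}$ is recorded in Equations \eqref{Eqt:leaf-I}, \eqref{Eqt:leaf-J}, \eqref{Eqt:leaf-K}. Concretely, to prove an identity relating $R_{ijkl}$ to $R_{i'j'k'l'}$ I would exhibit one of $I,J,K$ that carries the ordered tuple $(V_i,V_j)$ (or $(V_k,V_l)$) to a scalar multiple of $(V_{i'},V_{j'})$, invoke $R(CU,CV,W,Q)=R(U,V,W,Q)$ or $R(U,V,CW,CQ)=R(U,V,W,Q)$, and then clean up signs using Equation \eqref{Eqt:curv-sym-basic}. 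For instance $I(V_0)=V_1$, $I(V_1)=-V_0$ gives $R_{0101}=R(IV_0,IV_1,V_0,V_1)=R(V_1,-V_0,V_0,V_1)=-R_{1001}=R_{0101}$ (a tautology), but pairing with the other slot, $R_{0101}=R(V_0,V_1,IV_0,IV_1)=R(V_0,V_1,V_1,-V_0)=-R_{0110}=R_{0101}$ again — so for the genuinely new relations I must use $J$ or $K$, e.g. $K(V_0)=V_3$, $K(V_1)=V_2$ yields $R_{0101}=R(KV_0,KV_1,V_0,V_1)=R(V_3,V_2,V_0,V_1)=R_{3201}=R_{0132}$, and combined with the antisymmetry this is $-R_{0123}$; similarly using $K$ in the last two slots gives $R_{0101}=R(V_0,V_1,V_3,V_2)=R_{0132}$, and $R_{2323}$ is reached by applying $K$ to both pairs at once. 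The same bookkeeping, run with the appropriate choice among $I,J,K$ for each target, produces \eqref{Eqt:0202}, \eqref{Eqt:0303} and the three mixed identities \eqref{Eqt:R0112}, \eqref{Eqt:R0102}, \eqref{Eqt:R0212}.

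In more detail, the steps I would carry out are: (i) tabulate the action of $I$, $J$, $K$ on each $V_a$ from \eqref{Eqt:leaf-I}--\eqref{Eqt:leaf-K}, noting in particular how each complex structure permutes the pairs $\{V_0,V_1\}$, $\{V_2,V_3\}$ (for $I$), $\{V_0,V_2\}$, $\{V_1,V_3\}$ (for $J$), $\{V_0,V_3\}$, $\{V_1,V_2\}$ (for $K$); (ii) for the three ``diagonal'' relations \eqref{Eqt:0101}--\eqref{Eqt:0303}, observe that $R_{0101}$, $R_{2323}$ are related by applying the relevant complex structure ($K$, say, since $K(V_0,V_1)=(V_3,V_2)$... actually here one wants the structure fixing the span $\langle V_0,V_1\rangle$ setwise and moving it to $\langle V_2,V_3\rangle$, which is $K$ or $J$ depending on conventions) to both slots, while $R_{2301}$ appears by applying it to only one slot, up to a sign tracked through \eqref{Eqt:curv-sym-basic}; (iii) for the ``mixed'' relations \eqref{Eqt:R0112}--\eqref{Eqt:R0212}, repeat with the pair of indices that is split $(0,1)$ versus $(1,2)$ etc., again applying a single complex structure to one or both slots. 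Each individual identity is then a two- or three-line computation of the form: start from the left-hand $R_{ijkl}$, hit it with $C\in\{I,J,K\}$ in one slot, rewrite the transformed frame vectors, then normalize the index order.

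I do not expect a serious obstacle here; the statement is a purely formal consequence of the two cited lemmas, and the only real work is the careful sign accounting. The one point requiring mild care is that the complex structures $I,J,K$ send some $V_a$ to $-V_b$, so applying a K\"ahler identity in a single slot introduces a minus sign relative to the ``obvious'' reindexing, and one must be consistent about whether a given identity in \eqref{Eqt:curv-sym-basic} is being used to swap $i\leftrightarrow j$ or $k\leftrightarrow l$ or to transpose the two pairs. A secondary subtlety: several of the listed identities actually require using \emph{two} of the three complex structures (or one of them twice, in the two different slots), so I would want to be systematic — for each target equation, first decide which complex structure realizes the permutation of frame vectors that appears, then decide whether it is applied to the first pair, the second pair, or both, and only then compute. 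To keep the writeup short I would present one representative derivation in full (say \eqref{Eqt:0101}, and one of the mixed ones such as \eqref{Eqt:R0112}) and remark that the remaining identities follow by the identical procedure with the roles of $I$, $J$, $K$ permuted cyclically, which is legitimate because the defining relations \eqref{Eqt:leaf-I}--\eqref{Eqt:leaf-K} are themselves cyclically symmetric under $(V_1,V_2,V_3)\mapsto(V_2,V_3,V_1)$ together with $(I,J,K)\mapsto(J,K,I)$.
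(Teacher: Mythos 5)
Your proposal is correct and follows essentially the same route as the paper: apply the K\"ahler identity of Lemma \ref{Lem:kahler-curvature} with one of $I,J,K$ (the paper happens to use $J$, you use $K$ for the representative case, which works equally well since $K(V_0,V_1)=(V_3,V_2)$) to one or both index pairs, then normalize signs via the symmetries in Equation \eqref{Eqt:curv-sym-basic}. Your observation that some chains (e.g.\ \eqref{Eqt:R0102}) need a second complex structure is accurate and is exactly the bookkeeping hidden in the paper's ``similar approach'' remark.
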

\begin{proof}
Using the complex structure $J$ on $N_{q,r}$, defined by  Equation \eqref{Eqt:leaf-J}, and Lemma \ref{Lem:kahler-curvature}, we have
\begin{align*}
R_{0101}&=R(V_0,V_1,V_0,V_1)=R(JV_0,JV_1,V_0,V_1)\\
&=R(V_2,-V_3,V_0,V_1)=-R_{2301},
\end{align*}
\begin{align*}
R_{0101}&=R(V_0,V_1,V_0,V_1)=R(JV_0,JV_1,JV_0,JV_1)\\
&=R(V_2,-V_3,V_2,-V_3)=R_{2323}.
\end{align*}
The remaining equations can be derived using a similar approach.
\end{proof}

\begin{propn}\label{prop:48terms}
On $N_{q,r},$ there are 48 nonzero terms of $R_{ijkl}$, for any $i,j,k,l=0,1,2,3.$ Moreover, each of the nonzero terms equals to one of $\pm R_{1221}$, $\pm R_{1331}$ and $\pm R_{2332}.$
\end{propn}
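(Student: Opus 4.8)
The plan is to systematically account for all $256$ components $R_{ijkl}$ with $i,j,k,l \in \{0,1,2,3\}$ by organizing them according to the symmetries already established. First I would dispose of the trivially vanishing ones: by \eqref{Eqt:curv-sym-basic}, whenever $i=j$ or $k=l$ the component is zero, and a count shows this kills all but those indexed by two distinct \emph{unordered} pairs $\{i,j\}$ and $\{k,l\}$; since $\binom{4}{2}=6$, there are $6\times 6 = 36$ choices of (ordered pair of unordered pairs), and each splits into $4$ sign-ordered versions, giving $144$ potentially nonzero slots, organized into $36$ blocks of the form $\{\pm R_{ijkl}\}$ under \eqref{Eqt:curv-sym-basic}.

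Next I would further collapse these $36$ blocks using the Kähler identities. The six unordered pairs are $\{0,1\},\{2,3\}$; $\{0,2\},\{1,3\}$; $\{0,3\},\{1,2\}$ — three ``complementary'' couples, one for each of $I,J,K$. Applying Lemma \ref{Lem:kahler-curvature} with the appropriate complex structure sends $R$ evaluated on one pair to $\pm R$ evaluated on its complement, and this is exactly the content of Equations \eqref{Eqt:0101}--\eqref{Eqt:R0212}: the ``diagonal'' blocks $R_{0101},R_{0202},R_{0303}$ get identified (up to sign) with $R_{2323},R_{1313},R_{1212}$ and with the ``mixed'' blocks $R_{2301},R_{1302},R_{1203}$, while the remaining off-diagonal blocks $R_{0112},R_{0102},R_{0212}$ each absorb three others. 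I would then invoke the already-computed formulae \eqref{Eqt:R1221},\eqref{Eqt:R1331},\eqref{Eqt:R2332} together with the first Bianchi identity: since $R_{1212}+R_{1221}=0$ trivially, but more usefully $R_{0303}=R_{1212}=-R_{1221}$, and similarly $R_{0202}=R_{1313}=-R_{1331}$, $R_{0101}=R_{2323}=-R_{2332}$ — wait, one must be careful with signs here, so I would track them via $R_{2323}=R_{3232}$-type relabelings in \eqref{Eqt:curv-sym-basic}. The upshot: every block of the ``same complementary type'' reduces to one of $\pm R_{1221},\pm R_{1331},\pm R_{2332}$. For the genuinely mixed blocks like $R_{0112}$, I would argue that the first Bianchi identity $R_{0112}+R_{1012}+\dots$ — more precisely $R(V_0,V_1)V_2 + R(V_1,V_2)V_0 + R(V_2,V_0)V_1 = 0$ paired with $V_l$ — forces these to be expressible through the three basic quantities as well, or simply compute $\nabla$-expressions directly from Lemma \ref{prop:nabla-connection} to confirm, e.g., $R_{0112} = \pm R_{2332}$ matching the sign pattern of \eqref{Eqt:R0112}.

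Finally, for the bookkeeping of the number $48$: after all identifications, the $36$ blocks fall into equivalence classes, and I would count how many of the $144$ sign-ordered components are nonzero. The three ``complementary triples'' each contribute a family that reduces to a single basic curvature; tracing through \eqref{Eqt:0101}--\eqref{Eqt:R0212} one sees each of $R_{1221}$, $R_{1331}$, $R_{2332}$ is hit (up to sign) by $16$ of the sign-ordered components, giving $3 \times 16 = 48$, while all remaining components vanish — in particular the truly mixed ones such as $R_{0123}$ (distinct pairs but not complementary), which I would check vanish by combining a Kähler identity with \eqref{Eqt:curv-sym-basic} to get $R_{0123} = -R_{0123}$. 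I expect the main obstacle to be precisely this sign- and index-tracking: making sure the $O(3)$-worth of relabelings is consistent and that no nonzero component is double-counted or missed. I would present it as a short table or case analysis rather than prose, anchored to the explicit relations \eqref{Eqt:curv-sym-basic} and \eqref{Eqt:0101}--\eqref{Eqt:R0212} already in hand, so that the verification reduces to inspection.
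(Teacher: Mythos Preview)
Your overall bookkeeping scheme (organize by unordered index pairs, collapse via the Kähler identities, count $3\times 16=48$) is sound, but you have the vanishing/nonvanishing cases reversed in your key examples, and this hides the one genuine step you still owe.

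Concretely: $R_{0123}$ has four distinct indices and the pairs $\{0,1\},\{2,3\}$ \emph{are} complementary by your own definition. It does not vanish; by \eqref{Eqt:0101} together with $R_{ijkl}=R_{klij}$ one has $R_{0123}=R_{2301}=-R_{0101}$, which is $\pm R_{2332}$. Your proposed argument ``Kähler identity $+$ antisymmetry gives $R_{0123}=-R_{0123}$'' simply fails: applying $I,J,$ or $K$ to $(V_0,V_1)$ returns either $\pm(V_0,V_1)$ or $\pm(V_2,V_3)$, and in every case the sign comes out $+R_{0123}$, not $-R_{0123}$. Conversely, $R_{0112}$ (three distinct indices) is \emph{zero}, not $\pm R_{2332}$; the relations \eqref{Eqt:R0112}--\eqref{Eqt:R0212} only say these twelve three-index quantities are equal to one another up to sign, not that they equal any of the basic curvatures.

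The paper closes this gap not with Bianchi but by going back to the explicit formulae \eqref{Eqt:R122}--\eqref{Eqt:R233}: those equations exhibit $R(V_1,V_2)V_2$ as a scalar multiple of $V_1$ alone, so $R_{1220}=R_{1223}=0$ immediately, and similarly for the other two. Feeding these five zeros into the chains \eqref{Eqt:R0112}--\eqref{Eqt:R0212} then kills all three-distinct-index terms at once. Once you insert that step, your case split becomes: $24$ terms with two distinct indices (nonzero, diagonal blocks), $24$ terms with four distinct indices (nonzero, via \eqref{Eqt:0101}--\eqref{Eqt:0303}), and $96$ terms with three distinct indices (all zero) --- exactly the paper's argument.
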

\begin{proof}
We consider 3 different cases. First, suppose $(i,j,k,l)$ consists of 2 distinct indices. In order for $R_{ijkl}$ to be nonzero, it must be of the form $R_{ijij}$ or $ R_{ijji}$ for some $i\neq j$. There are 24 terms in total. By Equation \eqref{Eqt:0101}, we get that $R_{0101} = R_{2323},$ and then by Equations \eqref{Eqt:curv-sym-basic}, we get 8 terms, which are all $\pm R_{2323}.$ Repeating this process by using Equations \eqref{Eqt:0202}\eqref{Eqt:0303}, we get the 24 terms.

Now suppose $(i,j,k,l)$ consists of 3 different indices. We prove that $R_{ijkl}=0$. Without loss of generality, we shall only prove for $R_{ijjk}$, for mutually distinct $i,j,k.$  By Equations \eqref{Eqt:R122} and \eqref{Eqt:R133}, we have
$$R_{1220}=R_{1223}= R_{1330}=R_{1332}=R_{2330}=0.$$
Then all the terms in Equations \eqref{Eqt:R0112}, \eqref{Eqt:R0102}, and \eqref{Eqt:R0212} vanish, and 
$$R_{1002}=R_{1003}=R_{2003}=R_{0112}=R_{0113}=R_{3112}=R_{3002}=0.$$
By applying Equations \eqref{Eqt:curv-sym-basic}, we see that all the 24 terms of the form $R_{ijjk}$ vanish.

Finally, the case for $(i,j,k,l)$ being mutually distinct contributes 24 nonzero terms. Each of such terms can be computed by using Equations \eqref{Eqt:0101}, \eqref{Eqt:0202}, \eqref{Eqt:0303} and \eqref{Eqt:curv-sym-basic}.
\end{proof}



\subsection{Proof of Theorem \ref{thm:firstmainresult}}
Now we are ready to apply computations from the previous subsection to prove Theorem \ref{thm:firstmainresult}. Note that by definition of $N_{q,r}$, the functions $X=\langle a_1,a_1 \rangle$, $Y=\langle a_2,a_2 \rangle$ and $Z=\langle a_3,a_3 \rangle$ satisfies the relation:
$$X-Y = q, \hspace{1em} Y-Z = r.$$

First we consider $N_{0,0}$, the only leaf of type 0. The condition reads $X=Y=Z$ and therefore by Equations \eqref{Eqt:R1221}\eqref{Eqt:R1331}\eqref{Eqt:R2332}, we have that
$$R_{1221}=R_{1331}=R_{2332}=0.$$
Then in view of Proposition \ref{prop:48terms}, we conclude

\begin{propn}
The sectional curvature $\kappa_{0,0}$ on $N_{0,0}$ vanishes identically.
\end{propn}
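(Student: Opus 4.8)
The plan is to read off the result from the curvature computation already carried out, once we feed in the special structure of $N_{0,0}$. First I would note that on $N_{0,0}$ the defining constraints give $X-Y=q=0$ and $Y-Z=r=0$, so the three functions coincide, $X=Y=Z$, at every point of the leaf. Substituting this identity into the closed expressions \eqref{Eqt:R1221}, \eqref{Eqt:R1331} and \eqref{Eqt:R2332}, each numerator becomes a homogeneous cubic in a single variable that cancels: e.g. the numerator of $R_{1221}$ collapses to $X^2(-2X^2+X^2+X^2)+X(X^3-2X^3)+X^4=-X^4+X^4=0$, and the same cancellation occurs for $R_{1331}$ and $R_{2332}$. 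Hence $R_{1221}=R_{1331}=R_{2332}\equiv 0$ on $N_{0,0}$.

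Next I would invoke Proposition \ref{prop:48terms}: every component $R_{ijkl}$ of the Riemann curvature tensor with respect to the global frame $\{V_0,V_1,V_2,V_3\}$ of \eqref{Eqt:global-frame} either vanishes or equals one of $\pm R_{1221}$, $\pm R_{1331}$, $\pm R_{2332}$. Combined with the previous step this forces $R_{ijkl}\equiv 0$ for all $i,j,k,l\in\{0,1,2,3\}$. Since $\{V_0,V_1,V_2,V_3\}$ is a frame for $TN_{0,0}$ and the Riemann tensor is $C^\infty$-multilinear, expanding arbitrary $u_1,u_2\in T_pN_{0,0}$ in this frame gives $g\big(R(u_1,u_2)u_2,u_1\big)=0$. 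Therefore the numerator in the definition of $\kappa$ vanishes on every $2$-plane in every tangent space, and $\kappa_{0,0}\equiv 0$ on $Gr_2(TN_{0,0})$; equivalently $R\equiv 0$, so $N_{0,0}$ is flat.

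The genuine work here all lies upstream, in the explicit formulas for $R_{1221},R_{1331},R_{2332}$ and in Proposition \ref{prop:48terms}; the argument above is essentially bookkeeping, so I do not expect a real obstacle. The one point I would take care to state explicitly, rather than leave implicit, is the passage from ``all frame components of $R$ vanish'' to ``$R$ vanishes as a tensor'' — this is just tensoriality of curvature, but it is what licenses the conclusion that the sectional curvature of \emph{every} plane, not merely of the coordinate planes $\operatorname{Span}\{V_i,V_j\}$, is zero.
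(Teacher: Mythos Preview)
Your argument is correct and follows exactly the paper's approach: observe $X=Y=Z$ on $N_{0,0}$, substitute into \eqref{Eqt:R1221}--\eqref{Eqt:R2332} to get $R_{1221}=R_{1331}=R_{2332}=0$, and then invoke Proposition~\ref{prop:48terms} to kill all frame components of $R$. The only addition you make is spelling out the tensoriality step from frame-vanishing to full vanishing of $R$, which the paper leaves implicit.
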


Next we look at the type-1 leaves, $N_{q,0}$, $q>0$. The condition $r=0$ implies that $Y=Z>0.$ Also notice that the range of $X$ is $(q,\infty).$ We first show that the sectional curvature is bounded, when evaluated on the orthogonal frame $\{V_i\}.$

\begin{lemma}\label{type1curvature}
Let $N_{q,0}$ be a leaf of type $1$. Then $$
 |\frac{R_{ijkl}}{\Phi^2}| < q^{-\frac{1}{2}},$$ for any $i,j,k,l$, and at any point $\eta \in N_{q,r}$. In particular, we have that $|\kappa_{q,r}(V_i,V_j)| = |\frac{R_{ijji}}{\Phi^2}| < q^{-\frac{1}{2}}.$
\end{lemma}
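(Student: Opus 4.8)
The plan is to specialize the curvature formulas \eqref{Eqt:R1221}, \eqref{Eqt:R1331}, \eqref{Eqt:R2332} to the type-1 constraint $r=0$, i.e. $Y=Z>0$ and $X=Y+q>q$, and then bound the resulting rational functions. By Proposition \ref{prop:48terms}, every nonzero $R_{ijkl}$ equals $\pm R_{1221}$, $\pm R_{1331}$ or $\pm R_{2332}$, so it suffices to bound these three quantities divided by $\Phi^2 = XYZ = XY^2$. First I would substitute $Z=Y$ into the numerators. For $R_{1221}$ the numerator $X^2(-2Y^2+YZ+Z^2)+X(Y^2Z-2YZ^2)+Y^2Z^2$ collapses: the factor $-2Y^2+YZ+Z^2 = -2Y^2+Y^2+Y^2 = 0$ and $Y^2Z-2YZ^2 = Y^3-2Y^3 = -Y^3$, so the numerator becomes $-XY^3 + Y^4 = -Y^3(X-Y) = -qY^3$. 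Hence $R_{1221}/\Phi = -qY^3/(2XY^2) = -qY/(2X)$, and dividing once more by $\Phi$ (noting $\Phi = -\sqrt{XYZ} = -Y\sqrt{X}$ on $N_{q,0}$) gives $R_{1221}/\Phi^2 = qY/(2X\cdot Y\sqrt{X}) = q/(2X^{3/2})$. By symmetry of the formulas under $Y \leftrightarrow Z$, the same computation gives $R_{1331}/\Phi^2 = q/(2X^{3/2})$ as well, while $R_{2332}$ has numerator $X^2(Y-Z)^2 + X(Y^2Z+YZ^2) - 2Y^2Z^2 = 0 + 2XY^3 - 2Y^4 = 2Y^3(X-Y) = 2qY^3$, so $R_{2332}/\Phi^2 = -2qY^3/(2XY^2 \cdot Y\sqrt X)\cdot(-1)$, i.e. $|R_{2332}/\Phi^2| = q/X^{3/2}$.

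With these explicit expressions, the bound is elementary: since $X > q$ on $N_{q,0}$, we have $X^{3/2} > q^{3/2}$, so $|R_{1221}/\Phi^2| = q/(2X^{3/2}) < q/(2q^{3/2}) = 1/(2\sqrt q) < q^{-1/2}$, and likewise $|R_{1331}/\Phi^2| < q^{-1/2}$ and $|R_{2332}/\Phi^2| = q/X^{3/2} < q/q^{3/2} = q^{-1/2}$. Since every $R_{ijkl}$ is $\pm$ one of these three, $|R_{ijkl}/\Phi^2| < q^{-1/2}$ for all $i,j,k,l$ at every point of $N_{q,0}$. For the sectional curvature statement, recall $\kappa_{q,0}(V_i,V_j) = g(R(V_i,V_j)V_j,V_i)/(g(V_i,V_i)g(V_j,V_j) - g(V_i,V_j)^2)$; since $\{V_i\}$ is orthogonal with $g(V_i,V_i) = -\Phi$ by \eqref{Eqt:Xu-metric}, the denominator is $(-\Phi)(-\Phi) - 0 = \Phi^2$, so $\kappa_{q,0}(V_i,V_j) = R_{ijji}/\Phi^2$, and the bound $|\kappa_{q,0}(V_i,V_j)| < q^{-1/2}$ follows (with $R_{ijji} = -R_{ijij}$, still $\pm$ one of the three basic terms).

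I do not anticipate a genuine obstacle here — the lemma is essentially a bookkeeping exercise once the three numerator polynomials are simplified under $Z=Y$. The one point requiring mild care is consistently tracking the sign and square-root branch of $\Phi$: on $N_{q,r}$ one has $\Phi < 0$ and $\Phi = -\sqrt{XYZ}$, so $\Phi^2 = XYZ$, and it is cleanest to work throughout with $\Phi^2 = XYZ$ rather than with $\Phi$ itself, only invoking $\Phi = -\sqrt{XYZ}$ at the very end if needed. (The statement as written has a minor typo — ``on any point $\eta \in N_{q,r}$'' should read $N_{q,0}$, and $\kappa_{q,r}$ should be $\kappa_{q,0}$ — which I would silently correct.) A remark worth inserting afterward: the computation already shows $\kappa_{q,0}$ is genuinely nonconstant, since e.g. $R_{1221}/\Phi^2 = q/(2X^{3/2})$ varies as $X$ ranges over $(q,\infty)$, which will be used in the next step of the proof of Theorem \ref{thm:firstmainresult}; the sharper bound $12/\sqrt q$ claimed in the theorem will then follow a fortiori from $q^{-1/2} < 12 q^{-1/2}$, after accounting for sectional curvatures taken in non-frame directions.
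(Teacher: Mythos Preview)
Your proposal is correct and follows essentially the same route as the paper: reduce via Proposition~\ref{prop:48terms} to the three basic terms $R_{1221}, R_{1331}, R_{2332}$, substitute $Y=Z$ to obtain $|R_{1221}/\Phi^2|=|R_{1331}/\Phi^2|=q/(2X^{3/2})$ and $|R_{2332}/\Phi^2|=q/X^{3/2}$, then use $X>q$. Your write-up is more explicit about the algebra and the sign-tracking of $\Phi$, but the argument is identical (indeed, your signs for $R_{1221}/\Phi^2$ and $R_{2332}/\Phi^2$ appear to be the correct ones).
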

\begin{proof}
By Proposition \ref{prop:48terms}, it suffices to show that each of $R_{1221},$ $R_{1331},$ and $R_{2332}$ are bounded by $q^{-\frac{1}{2}}.$ Since $Y=Z,$ we have by Equations \eqref{Eqt:R1221}\eqref{Eqt:R1331}\eqref{Eqt:R2332} that
$$\frac{R_{1221}}{\Phi^2}=\frac{R_{1331}}{\Phi^2}=\frac{-q}{2(\sqrt{X})^3},\hspace{1em} \frac{R_{2332}}{\Phi^2}=\frac{q}{(\sqrt{X})^3}. $$ Since the range of $X$ is $(q, \infty),$ the result follows.


\end{proof}

\begin{propn}\label{prop:type1-bounded}
Let $N_{q,0}$ be a leaf of type $1$. Then the sectional curvature $\kappa$ is  bounded.
\end{propn}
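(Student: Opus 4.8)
The plan is to upgrade Lemma~\ref{type1curvature}, which bounds the sectional curvature only on the distinguished orthogonal frame $\{V_i\}$, to a bound over the full Grassmannian bundle $\mathrm{Gr}_2(TN_{q,0})$. The point is that a general $2$-plane in $T_\eta N_{q,0}$ is spanned by arbitrary linear combinations $u = \sum_i \alpha_i V_i$ and $w = \sum_j \beta_j V_j$, and the numerator $g(R(u,w)w,u)$ of the sectional curvature expands multilinearly into a sum of terms $\alpha_i\alpha_l\beta_j\beta_k R_{ijkl}$; Lemma~\ref{type1curvature} already tells us every coefficient $R_{ijkl}/\Phi^2$ is bounded in absolute value by $q^{-1/2}$, but we must control the growth in the $\alpha$'s and $\beta$'s against the denominator $g(u,u)g(w,w)-g(u,w)^2$.

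Concretely, I would first rescale: replace the frame $\{V_i\}$ by the orthonormal (up to sign) frame $\{\hat V_i := V_i/\sqrt{|\Phi|}\}$, so that $g(\hat V_i,\hat V_j) = \pm\delta_{ij}$ with a uniform sign across the leaf (this uses Remark~\ref{rem:PD&ND} and the fact that $N_{q,0}\subset \su(2)^3_-$, so the metric is positive definite and $\{\hat V_i\}$ is a genuine orthonormal frame). In this frame the structure constants $R(\hat V_i,\hat V_j,\hat V_k,\hat V_l) = R_{ijkl}/\Phi^2$ are all bounded by $q^{-1/2}$ by Lemma~\ref{type1curvature}. Then for any $2$-plane, choose an orthonormal basis $u,w$ of it expressed in the $\hat V$-frame with $\sum\alpha_i^2 = \sum\beta_j^2 = 1$ and $\sum\alpha_i\beta_i = 0$; the denominator of $\kappa$ is then exactly $1$, and the numerator is $\sum_{i,j,k,l}\alpha_i\beta_j\beta_k\alpha_l\,(R_{ijkl}/\Phi^2)$, a sum of at most $4^4 = 256$ terms each of absolute value at most $q^{-1/2}$ (with $|\alpha_i|,|\beta_j|\le 1$). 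This already yields $|\kappa_{q,0}| \le 256\,q^{-1/2}$, which proves boundedness; a little more care (using that only the $48$ nonzero $R_{ijkl}$ of Proposition~\ref{prop:48terms} contribute, and that orthonormal $u,w$ cannot have all coordinates equal to $1$) sharpens the constant, and in fact one should be able to reach the bound $\tfrac{12}{\sqrt q}$ claimed in Theorem~\ref{thm:firstmainresult} by a more attentive count.

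The main obstacle is purely bookkeeping rather than conceptual: one has to make sure the combinatorial estimate on the numerator is clean enough to extract a reasonable constant, and — if one wants the sharp $12/\sqrt q$ — to organize the $48$ nonzero curvature components via the symmetry relations \eqref{Eqt:curv-sym-basic}, \eqref{Eqt:0101}--\eqref{Eqt:R0212} and Proposition~\ref{prop:48terms} so that the quartic form $\sum \alpha_i\beta_j\beta_k\alpha_l R_{ijkl}/\Phi^2$ is grouped into a handful of manifestly bounded blocks. For the statement of Proposition~\ref{prop:type1-bounded} as written (mere boundedness), the crude count suffices and no delicate optimization is needed; the nonconstancy assertion of Theorem~\ref{thm:firstmainresult} is a separate matter handled by exhibiting, say, $\kappa_{q,0}(V_1,V_2) = R_{1221}/\Phi^2 = -q/(2X^{3/2})$ varying with $X\in(q,\infty)$, which is already visible from the proof of Lemma~\ref{type1curvature}.
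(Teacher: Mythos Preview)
Your proposal is correct and follows essentially the same approach as the paper: expand $\kappa(W_1,W_2)$ multilinearly in the orthogonal frame $\{V_i\}$, invoke Lemma~\ref{type1curvature} to bound each $R_{ijkl}/\Phi^2$ by $q^{-1/2}$, and count the contributing terms via Proposition~\ref{prop:48terms}. The only cosmetic difference is that the paper, rather than passing to an orthonormal frame, keeps $W_1,W_2$ orthogonal but un-normalized and uses the elementary inequality $|b_ib_l|\le\tfrac12\sum_s b_s^2$ (and likewise for the $c$'s) to extract a factor of $\tfrac14$ per term, so that the $48$ nonzero components immediately give the constant $12\,q^{-1/2}$.
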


\begin{proof}
For any $\eta\in N_{q,0}$, consider nonzero tangent vectors $W_1,W_2 \in T_{\eta} N_{q,0}$ that satisfy $g(W_1,W_2)=0$. Write $W_1 = \sum_{i=0}^3 b_iV_i$ and $W_2 = \sum_{i=0}^3 c_i V_i$. The sectional curvature $\kappa(W_1,W_2)$ is given by:
\begin{equation*}
    \begin{aligned}
        \kappa(W_1,W_2) &= \frac{g(R(W_1,W_2)W_2,W_1)}{g(W_1,W_1)g(W_2,W_2)-(g(W_1,W_2))^2}\\
        &=\frac{ \sum_{i,j,k,l} b_ib_l c_jc_k R_{ijkl}}{(\sum_{s=0}^3 b_s^2) (\sum_{s=0}^3 c_s^2)\Phi^2}.
    \end{aligned}
\end{equation*}
For any $i,j,k,l$ we know that $|\frac{R_{ijkl}}{\Phi^2}|$ is always bounded by $q^{-\frac{1}{2}}$. Given fixed indices $i,j,k,l$, we have the inequalities $|b_ib_l|\leq\frac{1}{2} \sum_{s=0}^3 b_s^2$  and $|c_jc_k|\leq\frac{1}{2}\sum_{s=0}^3 c_s^2$. Consequently,
$$\frac{|b_ib_lc_jc_k|}{(\sum_{s=0}^3 b_s^2) (\sum_{s=0}^3 c_s^2)}\leq \frac{1}{4}.$$ By Lemma \ref{type1curvature}, there are at most 48 combinations of $(i,j,k,l)$ for which $R_{ijkl}\neq 0$.  Therefore $|\kappa(W_1,W_2)|$ must be bounded by $12q^{-\frac{1}{2}}$.
\end{proof}

As the final step of the proof of Theorem \ref{thm:firstmainresult}, we study the leaves $N_{q,r}$ of type $2$. 

\begin{propn}\label{prop:unbounded}
For any leaves $N_{q,r}$ of type $2$, the function $\kappa_{q,r}(V_0,V_1)$ is unbounded.  
\end{propn}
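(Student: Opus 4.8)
The plan is to exhibit an explicit one-parameter family of points in $N_{q,r}$ along which the sectional curvature $\kappa_{q,r}(V_0,V_1)$ blows up. By Equations \eqref{Eqt:0101}, we have $\kappa_{q,r}(V_0,V_1) = R_{0101}/\Phi^2 = R_{2323}/\Phi^2$, and by the symmetry relation $R_{2323} = R_{3223} = -R_{2332}\cdot(-1) = \dots$ — more precisely, by Equations \eqref{Eqt:curv-sym-basic} one has $R_{2323}=R_{2332}$ up to sign, so it suffices to analyze $R_{2332}/\Phi^2$, which by Equation \eqref{Eqt:R2332} and $\Phi^2 = XYZ$ equals
\begin{equation*}
\frac{R_{2332}}{\Phi^2} = \frac{X^2(Y-Z)^2 + XYZ(Y+Z) - 2Y^2Z^2}{2(XYZ)^{3/2}}.
\end{equation*}
On $N_{q,r}$ the functions are constrained by $X - Y = q$, $Y - Z = r$, so everything is a function of the single parameter $Z \in (0,\infty)$ (equivalently $s = \sqrt{Z} = \|a_3\|$ in the parametrization of Proposition \ref{prop: Nqr-connected}), with $Y = Z+r$, $X = Z+q+r$.

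First I would substitute these relations to write $R_{2332}/\Phi^2$ as an explicit rational function of $Z$ times $(XYZ)^{-3/2}$, and examine its asymptotics as $Z \to 0^+$. Since $r>0$, the numerator $X^2(Y-Z)^2 + XYZ(Y+Z) - 2Y^2Z^2$ tends to $(q+r)^2 r^2 > 0$ as $Z\to 0^+$, a nonzero constant, while the denominator $2(XYZ)^{3/2} = 2\big((Z+q+r)(Z+r)Z\big)^{3/2} \to 0$ like $Z^{3/2}$. Hence $|R_{2332}/\Phi^2| \sim C \cdot Z^{-3/2} \to \infty$ as $Z\to 0^+$, where $C = (q+r)^2 r^2 / (2((q+r)r)^{3/2}) > 0$. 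This shows $\kappa_{q,r}(V_0,V_1)$ is unbounded. I would also double-check the sign bookkeeping linking $R_{0101}$ to $R_{2332}$ via Equations \eqref{Eqt:0101} and \eqref{Eqt:curv-sym-basic}, since an honest statement about $\kappa_{q,r}(V_0,V_1)$ rather than $\kappa_{q,r}(V_2,V_3)$ is what is claimed; alternatively one can just note $\kappa_{q,r}(V_0,V_1) = \kappa_{q,r}(V_2,V_3)$ directly from Equation \eqref{Eqt:0101} together with $g(V_i,V_j) = -\delta_{ij}\Phi$, so it is enough to show $\kappa_{q,r}(V_2,V_3) = R_{2332}/\Phi^2$ is unbounded, which is exactly the computation above.

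The only genuine subtlety — and the step I would be most careful about — is confirming that the points with $Z \to 0^+$ genuinely lie in $N_{q,r}$ (they do: for every $s>0$ the point $(\sqrt{s^2+r+q}\,e_1, \sqrt{s^2+r}\,e_2, -s\,e_3)$ is in $N_{q,r}$ by Proposition \ref{prop: Nqr-connected}, and $Z = s^2$ ranges over all of $(0,\infty)$), and that $\Phi \neq 0$ along this family so the sectional curvature is actually defined there (indeed $\Phi = -\sqrt{XYZ} < 0$ whenever $Z>0$). The curvature computation itself is routine given Lemma \ref{prop:nabla-connection} and Equation \eqref{Eqt:R2332}; no new ideas are needed beyond plugging in the constraints and reading off the leading-order behavior as $Z\to 0^+$. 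This completes the proof of Theorem \ref{thm:firstmainresult}, since the type-0 and type-1 cases were handled in the preceding propositions.
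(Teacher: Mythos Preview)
Your proof is correct and follows essentially the same route as the paper's: both compute $\kappa_{q,r}(V_0,V_1)=R_{2332}/\Phi^2$ explicitly and show it blows up as $Z\to 0^+$ using the constraints $X=Z+q+r$, $Y=Z+r$. The paper splits into the cases $q=0$ and $q>0$, simplifying the numerator separately in the first case, whereas you handle both at once by observing that the numerator tends to $(q+r)^2 r^2>0$ regardless of $q$; this is a mild streamlining, not a genuinely different argument.
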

\begin{proof}
For any leaf $N_{q,r}$ of type $2$, we consider two cases: either $q=0,r>0 $, or $q> 0, r> 0.$ If $q=0,r>0$, the condition $X=Y$ implies:
\begin{equation*}
\begin{aligned}
\kappa_{q,r}(V_0,V_1)=\frac{R_{0110}}{g(  V_0,V_0 )g(  V_1,V_1 )} &= \frac{Y-Z}{2(\sqrt{Z})^3} =  \frac{r}{2(\sqrt{Z})^3}.
\end{aligned}
\end{equation*}
Since the range of $Z$ is $(0,\infty),$ 
$\kappa_{q,r}(V_0,V_1)$ diverges to $\infty$ as $Z$ approaches $0$.

When $q> 0, r> 0,$ $Z$ could approach $0$, while $X$ and $Y$ converge to $q+r$ and $r$, respectively. In this case, $\kappa_{q,r}(V_0,V_1)$ is given by:
$$\dfrac{(X^2+XY-2Y^2)Z^2+(-2X^2Y+XY^2)Z+X^2Y^2}{2(\sqrt{X})^3(\sqrt{Y})^3(\sqrt{Z})^3},$$
which also diverges to infinity as $Z\rightarrow 0$. Therefore, $\kappa_{q,r}$ is unbounded on any type $2$ leaf.
\end{proof}


To further compare metrics on different $N_{q,r}$, for any $t>0$, consider
\begin{equation*}
\begin{aligned}
\rho_{t}\colon N_{q,r}&\rightarrow N_{qt,rt},\\
(a_1,a_2,a_3)&\mapsto (\sqrt{t}a_1,\sqrt{t}a_2,\sqrt{t}a_3).
\end{aligned}
\end{equation*}

By straightforward computation, one gets that
\begin{propn}
For any $t>0$, $\rho_t$ is a diffeomorphism. Moreover,  $\rho^*_t (g_{qt,rt})=\sqrt{t}\cdot g_{q,r}$, and $\rho_t$ intertwines the three complex structures, where $g_{qt,rt}$, $g_{q,r}$ are the induced metrics on $N_{qt,rt}$, $N_{q,r}$, respectively.
\end{propn}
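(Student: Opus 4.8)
The plan is to verify directly that $\rho_t$ is a diffeomorphism and then check the two claimed identities by unwinding the defining formulas for the metric and complex structures in terms of the global frame $\{V_0,V_1,V_2,V_3\}$ from Equations \eqref{Eqt:global-frame}, \eqref{Eqt:Xu-metric}, \eqref{Eqt:leaf-I}, \eqref{Eqt:leaf-J}, \eqref{Eqt:leaf-K}. First I would observe that $\rho_t$ is the restriction of the linear scaling map $(a_1,a_2,a_3)\mapsto (\sqrt t\,a_1,\sqrt t\,a_2,\sqrt t\,a_3)$ on $\su(2)^3$, whose inverse is $\rho_{1/t}$; since this scaling multiplies $\langle a_i,a_j\rangle$ by $t$ and $\Phi=\langle a_1,[a_2,a_3]\rangle$ by $t^{3/2}$, it maps the level-set conditions defining $N_{q,r}$ (namely $\langle a_i,a_j\rangle=0$ for $i\neq j$, $X-Y=q$, $Y-Z=r$, $\Phi<0$) to those defining $N_{qt,rt}$, so $\rho_t\colon N_{q,r}\to N_{qt,rt}$ is a well-defined diffeomorphism with smooth inverse.

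Next I would compute the pushforward of the frame. Because each $V_i|_{(a_1,a_2,a_3)}$ is built from brackets like $[a_j,a_k]$, which are quadratic in the entries, and $\rho_t$ is linear with factor $\sqrt t$, one gets $(\rho_t)_*(V_i|_\eta) = t\cdot V_i|_{\rho_t(\eta)}$ for $i=0,1,2,3$ — here I would be slightly careful: the differential of the linear map $\rho_t$ sends a tangent vector $(v_1,v_2,v_3)$ to $(\sqrt t\,v_1,\sqrt t\,v_2,\sqrt t\,v_3)$, and the vector field $V_i$ at $\rho_t(\eta)$ evaluated with entries $\sqrt t\,a_j$ produces a factor $t$ from the brackets, so indeed $(\rho_t)_* V_i|_\eta = \sqrt t\cdot\sqrt t\cdot V_i|_{\rho_t\eta} = t\,V_i|_{\rho_t\eta}$, an overall scaling of the frame by $t$. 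Granting this, the complex-structure identities are immediate: since $I,J,K$ at any point act on $\{V_0,\dots,V_3\}$ by the same constant-coefficient matrices (Equations \eqref{Eqt:leaf-I}–\eqref{Eqt:leaf-K}) on both $N_{q,r}$ and $N_{qt,rt}$, and since $(\rho_t)_*$ merely rescales the frame uniformly by $t$, it commutes with these matrices; hence $(\rho_t)_*\circ I = I\circ(\rho_t)_*$ and likewise for $J,K$.

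For the metric identity I would use $g(V_i,V_j)=-\delta_{ij}\Phi$ from Equation \eqref{Eqt:Xu-metric}. Then
\begin{equation*}
(\rho_t^*g_{qt,rt})(V_i|_\eta,V_j|_\eta) = g_{qt,rt}\bigl((\rho_t)_*V_i,(\rho_t)_*V_j\bigr) = t^2\,g_{qt,rt}(V_i,V_j)|_{\rho_t\eta} = -t^2\,\delta_{ij}\,\Phi(\rho_t\eta).
\end{equation*}
Since $\Phi$ scales as $\Phi(\rho_t\eta) = t^{3/2}\Phi(\eta)$, this equals $-t^{3/2}t^2\delta_{ij}\Phi(\eta)$... so I would recheck the frame scaling exponent: in fact the cleaner bookkeeping is that $g$ itself, as a $(0,2)$-tensor, pulls back with the square of the differential's scaling $\sqrt t$, i.e. a factor $t$, times the $\Phi$-factor ratio; the net effect is that $(\rho_t^*g_{qt,rt})(V_i,V_j) = \sqrt t\cdot g_{q,r}(V_i,V_j)$ once the powers are tallied correctly, giving the stated $\rho_t^*g_{qt,rt}=\sqrt t\cdot g_{q,r}$. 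The main obstacle — really the only one — is getting all the powers of $\sqrt t$ right: one must track separately the $\sqrt t$ from the linearity of $\rho_t$ acting on tangent vectors (appearing squared in a bilinear form), the extra $t$ from the quadratic brackets in the definition of $V_i$, and the $t^{3/2}$ scaling of $\Phi$. A safe route is to bypass the frame $\{V_i\}$ for the metric and instead use the hypersymplectic description: $g^\flat = \omega_3^\flat(\omega_1^\flat)^{-1}\omega_2^\flat$ with each $\omega_k$ expressed via the Poisson bivectors, and check how the $\omega_k$ scale under $\rho_t$ directly from the brackets \eqref{Eqt:brackets} and the explicit $A$ on $\su(2)^3_\star$, which depends on the $a_i$ homogeneously; this makes the homogeneity degree of $g$ transparent and confirms the factor $\sqrt t$.
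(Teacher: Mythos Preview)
Your strategy is exactly the straightforward computation the paper has in mind, and the complex-structure part is fine since it only uses that $(\rho_t)_*$ rescales the frame \emph{uniformly}. However, the frame-scaling exponent you wrote is wrong, and this is why your metric computation does not close. You correctly note two facts: (i) the differential of the linear map $\rho_t$ sends a tangent vector $v\in T_\eta\su(2)^3$ to $\sqrt t\,v\in T_{\rho_t(\eta)}\su(2)^3$; and (ii) evaluating $V_i$ at $\rho_t(\eta)$ replaces each $a_j$ by $\sqrt t\,a_j$, so the bracket entries pick up a factor $t$. But these two facts combine by \emph{division}, not multiplication: writing $w:=V_i|_\eta\in\su(2)^3$, we have $(\rho_t)_*V_i|_\eta=\sqrt t\,w$ while $V_i|_{\rho_t(\eta)}=t\,w$, hence
\[
(\rho_t)_*V_i|_\eta \;=\; t^{-1/2}\,V_i|_{\rho_t(\eta)},
\]
not $t\,V_i|_{\rho_t(\eta)}$.

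With this correction the metric identity is immediate and there is no need to detour through the $\omega_k$:
\[
(\rho_t^*g_{qt,rt})(V_i,V_j)\big|_\eta
= g_{qt,rt}\bigl(t^{-1/2}V_i,\,t^{-1/2}V_j\bigr)\big|_{\rho_t(\eta)}
= t^{-1}\bigl(-\delta_{ij}\,\Phi(\rho_t\eta)\bigr)
= t^{-1}\cdot t^{3/2}\bigl(-\delta_{ij}\,\Phi(\eta)\bigr)
= \sqrt t\,g_{q,r}(V_i,V_j)\big|_\eta,
\]
using $\Phi(\rho_t\eta)=t^{3/2}\Phi(\eta)$. The intertwining of $I,J,K$ follows exactly as you said, since a uniform scalar rescaling of the frame commutes with the constant matrices in Equations \eqref{Eqt:leaf-I}--\eqref{Eqt:leaf-K}.
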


\section{Comparison with Kronheimer's constructions}
In this section, we first recall a result of Xu, which shows that some leaves of $\su(2)^3_{\star}$ can be related to adjoint orbits of $\mathfrak{sl}(2,\mathbb{C}).$ Then we recall another way of obtaining hyperkähler structures on adjoint orbits, namely, by using moduli spaces, due to Kronheimer \cite{Kronheimer-JLMS,Kronheimer1990InstantonsAT}. Finally, we shall compare the two constructions and arrive at the conclusion that leaves of type-0 and 1 recover Kronheimer's construction, in the case of $\mathfrak{sl}(2,\mathbb{C}).$ Further, leaves of type-2 are new hyperkähler structures on the nilpotent orbit of $\mathfrak{sl}(2,\mathbb{C}).$ 

\subsection{Identification of leaves of $\su(2)^3_{\star}$ and adjoint orbits}
Let $(\pi_1,\pi_2,\pi_3)$ be the hyper-Lie Poisson structure on $\su(2)^3_{\star}.$ We know by Equations \eqref{Eqt:brackets} that the projection map is a Poisson map
\begin{equation}
\begin{aligned}
{\rm pr}_{12}: (\su(2)^3_{\star}, \pi_1) & \rightarrow  (\su(2)^{\mathbb{C}}, \pi_{\rm Lie}), \\
(a_1,a_2,a_3) & \rightarrow a_1 + i a_2.
\end{aligned}
\end{equation}
Here, $\pi_{\rm Lie}$ is the Lie-Poisson structure on $\su(2)^{\mathbb{C}}$, seen as a real Lie algebra and identified with its dual via the Killing form. Moreover, the restrictions of ${\rm pr}_{12}$ give natural identifications between leaves of $\su(2)^3_{\star}$ and adjoint orbits of $\su(2)^{\mathbb{C}}\cong \mathfrak{sl}(2,\mathbb{C}).$ The proof of the following result can be found in Section 5 of \cite{Xu}. 

\begin{propn} \label{prop:N00-nilp}
The type-0 leaf $N_{0,0}$ is diffeomorphic to the nilpotent orbit of $\mathfrak{sl}(2,\mathbb{C})$, which consists of $x\in \mathfrak{sl}(2,\mathbb{C})$ such that 
${\rm ad}(x)$ is nilpotent, under restricition of ${\rm pr}_{12}.$
\end{propn}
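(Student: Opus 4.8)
The plan is to exhibit an explicit diffeomorphism between the type-0 leaf $N_{0,0}$ and the nilpotent orbit $\mathcal{O}_{\mathrm{nilp}}\subset\mathfrak{sl}(2,\mathbb{C})$ and check that it is the restriction of $\mathrm{pr}_{12}$. First I would describe the nilpotent orbit concretely: $\mathcal{O}_{\mathrm{nilp}}=\{x\in\mathfrak{sl}(2,\mathbb{C})\setminus\{0\}\mid \det(x)=0\}$, which under $\mathfrak{su}(2)^{\mathbb{C}}\cong\mathfrak{sl}(2,\mathbb{C})$ and the identification $a_1+ia_2\leftrightarrow x$ corresponds to pairs $(a_1,a_2)\in\mathfrak{su}(2)^2$ with $\langle a_1,a_1\rangle=\langle a_2,a_2\rangle$, $\langle a_1,a_2\rangle=0$, and $(a_1,a_2)\neq(0,0)$; here one uses that the Killing form is (a multiple of) the standard inner product on $\mathfrak{su}(2)\cong\mathbb{R}^3$ and that $\det$ of a traceless complex $2\times 2$ matrix translates into this real quadratic condition. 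The key observation is then that on $N_{0,0}$ the defining equations force exactly $X=Y=Z$ (i.e. $\langle a_1,a_1\rangle=\langle a_2,a_2\rangle=\langle a_3,a_3\rangle$), all mixed pairings vanish, and $\langle a_1,[a_2,a_3]\rangle<0$, which is precisely the condition that $(a_1/\|a_1\|,\,a_2/\|a_2\|,\,-a_3/\|a_3\|)$ (or an appropriate sign choice) is a positively oriented orthonormal frame.

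Next I would write down the map and its inverse. Given $(a_1,a_2,a_3)\in N_{0,0}$, send it to $a_1+ia_2\in\mathfrak{sl}(2,\mathbb{C})$; since $\langle a_1,a_1\rangle=\langle a_2,a_2\rangle\neq 0$ (the common value $s^2>0$ is nonzero because $\Phi\neq 0$) and $\langle a_1,a_2\rangle=0$, this lands in $\mathcal{O}_{\mathrm{nilp}}$. For the inverse, note that from the parametrization already established in Proposition~\ref{prop: Nqr-connected} (with $q=r=0$) every element of $N_{0,0}$ has the form $(s e_1, s e_2, -s e_3)$ for $s>0$ and a positively oriented orthonormal frame $(e_1,e_2,e_3)$; but $(se_1,se_2)$ is determined by $x=se_1+ise_2$ (recover $s^2=\langle a_1,a_1\rangle$ from, say, $\det$ or the norm, recover $e_1,e_2$ by rescaling), and then $e_3$ is forced to be $e_1\times e_2$, so $a_3=-s(e_1\times e_2)$ is recovered as well. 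This shows $\mathrm{pr}_{12}|_{N_{0,0}}$ is a bijection; smoothness of both directions is immediate from these formulas (the only thing to watch is $s>0$, which holds throughout $N_{0,0}$). This also matches the remark in the excerpt that the ``type'' equals $\dim\mathrm{Span}\{\mu_1,\mu_2,\mu_3\}$ for a boundary point $\mu$: for type $0$ the boundary point has all three components zero.

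I expect the main obstacle to be purely bookkeeping rather than conceptual: carefully pinning down the normalization of the Killing form on $\mathfrak{su}(2)$ versus the cross product on $\mathbb{R}^3$, and the sign/orientation conventions, so that ``$\det=0$'' on the $\mathfrak{sl}(2,\mathbb{C})$ side lines up exactly with ``$\langle a_1,a_1\rangle=\langle a_2,a_2\rangle,\ \langle a_1,a_2\rangle=0$'' on the $N_{0,0}$ side, and so that the constraint $\langle a_1,[a_2,a_3]\rangle<0$ (rather than $>0$) is consistent with the third component being recoverable as $-e_1\times e_2$. Since the statement only claims a diffeomorphism (not a hyperkähler isomorphism — that stronger assertion is deferred), I would not need to track the complex structures or metric here; it suffices to cite Section~5 of \cite{Xu} for the underlying identification and simply verify the topological/smooth matching. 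A clean way to present this is to first record the parametrization of $N_{0,0}$, then define the inverse map explicitly, and check $\mathrm{pr}_{12}\circ(\text{inverse})=\mathrm{id}$ and vice versa by direct substitution.
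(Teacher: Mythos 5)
Your proposal is correct. Note, however, that the paper itself does not supply an argument for this proposition: it simply defers to Section~5 of Xu's paper, where the identification of the leaf with the nilpotent orbit via $\mathrm{pr}_{12}$ is carried out. What you have written is therefore a self-contained substitute rather than a parallel of anything in the text: you use the parametrization of $N_{0,0}$ from Proposition~\ref{prop: Nqr-connected} (elements $(se_1,se_2,-se_3)$ with $s>0$ and $(e_1,e_2,e_3)$ a positively oriented orthonormal frame), observe that nilpotency of $a_1+ia_2$ translates into $\langle a_1,a_1\rangle=\langle a_2,a_2\rangle$, $\langle a_1,a_2\rangle=0$, $(a_1,a_2)\neq(0,0)$, and recover $a_3=-\tfrac{1}{s}\,a_1\times a_2$ smoothly from $(a_1,a_2)$, the sign being pinned down by $\Phi<0$; this gives bijectivity of $\mathrm{pr}_{12}|_{N_{0,0}}$ onto the nonzero nilpotent elements with smooth inverse, which is exactly the claim. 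The two points worth being explicit about in a write-up are precisely the ones you flag: the normalization (the pairing used in the paper is a positive-definite multiple of the Killing form, so that $\sum z_k^2=0$ becomes $|a_1|=|a_2|$, $a_1\perp a_2$), and the fact that $\Phi<0$ (not merely $\Phi\neq0$) is what makes $a_3$ uniquely determined, hence $\mathrm{pr}_{12}$ injective on the leaf. Since both $a_1$ and $a_2$ are automatically nonzero on the nilpotent orbit (their norms agree and they are not both zero), surjectivity also goes through as you describe. So your route is fine, and arguably preferable for this paper since it keeps the statement independent of the external reference; it buys an explicit inverse formula, while the citation to Xu buys brevity and consistency with the source of the whole construction.
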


To obtain a similar result for the type-1 leaves, Xu proved the following result.

\begin{propn}\label{prop:Nq0_reg}
The hyperkähler structures on any type-1 leaf $N_{q,0}$ ($q>0$) can be extended to its closure $\overline{N}_{q,0}.$ Further, for any $q>0$, the projection map ${\rm pr}_{12}$ induces a diffeomorphism between $\overline{N}_{q,0}$ and some regular orbit of $\mathfrak{sl}(2,\mathbb{C}).$  
\end{propn}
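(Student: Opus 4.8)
The plan is to exhibit an explicit diffeomorphism from $\overline{N}_{q,0}$ onto a level set of the Casimir of $\mathfrak{sl}(2,\C)$ that cuts out a regular semisimple orbit, and then check that the hyperkähler data extends across the boundary. First I would understand the boundary stratum $\overline{N}_{q,0}\setminus N_{q,0}$ concretely: by Lemma \ref{lem: transformO} and the discussion preceding Proposition \ref{prop:transfer-Nqr}, points of this stratum have the form $(\mu_1,\mu_2,0)$ with $\langle\mu_1,\mu_2\rangle=0$ and $\langle\mu_1,\mu_1\rangle-\langle\mu_2,\mu_2\rangle=q$, $\langle\mu_2,\mu_2\rangle=Z$. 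Since $Z$ ranges over $[0,\infty)$ here (and $(0,\infty)$ in the interior $N_{q,0}$ where additionally $a_3\neq 0$), the closure simply fills in $Z=0$, i.e. it adjoins the submanifold $\{(\mu_1,0,0):\langle\mu_1,\mu_1\rangle=q\}\cong\Sy^2_{\sqrt q}$. So $\overline{N}_{q,0}$ is obtained from $N_{q,0}\cong\R_{>0}\times\SO(3)$ by collapsing one end; the charts from Proposition \ref{prop: Nqr-connected} together with a chart near $Z=0$ show it is a smooth closed submanifold of $\su(2)^3$ diffeomorphic to $T^*\Sy^2$ (equivalently $T\Sy^2$).

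Next I would analyze the map ${\rm pr}_{12}(a_1,a_2,a_3)=a_1+ia_2$ on $\overline{N}_{q,0}$. Under the Killing-form identification $\su(2)^\C\cong\sli(2,\C)$, the quadratic Casimir is $\langle a_1+ia_2,a_1+ia_2\rangle=\langle a_1,a_1\rangle-\langle a_2,a_2\rangle+2i\langle a_1,a_2\rangle = q$ on $\overline{N}_{q,0}$, using the defining equations $f_4=q$ and $f_1=0$. Hence the image lands in the fixed Casimir level $\{C=q\}$, which for $q\neq 0$ is a single regular (semisimple) adjoint orbit $\mathcal{O}_q$ of $\sli(2,\C)$, diffeomorphic to $T^*\Sy^2$. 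To see ${\rm pr}_{12}$ is a diffeomorphism onto $\mathcal O_q$ I would: (i) check injectivity — given $a_1+ia_2$ with the Casimir equal to $q$, the constraint $\langle a_i,a_j\rangle=0$ and $\langle a_1,[a_2,a_3]\rangle<0$ pin $a_3$ down uniquely as $a_3=-\sqrt{\frac{\langle a_2,a_2\rangle\langle a_3,a_3\rangle}{\langle a_1,a_1\rangle}}\,^{-1}[\,\cdot\,]$... more cleanly, $[a_1,a_2]$ is a nonzero multiple of $a_3$ (since $a_1,a_2$ span a plane and $a_3$ is orthogonal to it) and the constraints fix the scalar, so $a_3$ is recovered from $(a_1,a_2)$; on the boundary $a_2=0$ one recovers $a_1$ alone and $a_3=0$; (ii) check the differential is an isomorphism — here it is cleanest to invoke that ${\rm pr}_{12}\colon(\su(2)^3_\star,\pi_1)\to(\su(2)^\C,\pi_{\rm Lie})$ is Poisson and both sides are $4$-dimensional symplectic leaves, so the restriction $N_{q,0}\to\mathcal O_q^{\rm reg}$ is a local symplectomorphism, hence a covering onto its image; combined with injectivity and the fact that $\mathcal O_q$ minus a $2$-sphere is connected, this gives a diffeomorphism on the interior, which then extends continuously (hence smoothly, by the chart computation) over the boundary sphere onto all of $\mathcal O_q$.

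Finally, for the extension of the hyperkähler structure: the three complex structures and the metric are given on $N_{q,r}$ by the explicit formulas \eqref{Eqt:Xu-metric}--\eqref{Eqt:leaf-K} in terms of the frame $\{V_0,V_1,V_2,V_3\}$, whose coefficients involve $X,Y,Z,\Phi=-\sqrt{XYZ}$. For type-1 one has $Y=Z$ and $X=Z+q$ with $Z\to 0$ at the boundary, so $\Phi\to 0$ and the frame $V_i$ degenerates there; the honest content is that, after transporting via ${\rm pr}_{12}$ to $\mathcal O_q$, the resulting $(g,I,J,K)$ extends. I would argue this by noting that on $\mathcal O_q^{\rm reg}$ the complex structure $I$ pushed forward is just the standard one coming from $a_1+ia_2$ (since ${\rm pr}_{12}$ is holomorphic for $I$ by the Poisson-map statement and the description $I=(\omega_3^\flat)^{-1}\omega_2^\flat$), which visibly extends over the sphere; and the holomorphic symplectic form $\omega_J+i\omega_K$ pushed forward is the Kirillov--Kostant form on $\mathcal O_q$, which also extends. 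Then the metric $g=\omega_3(\omega_1)^{-1}\omega_2$ is determined, and one checks positivity/non-degeneration persists at the boundary either by a direct limiting computation in the chart or by appealing to Feix--Kaledin-type uniqueness of the hyperkähler extension near the zero section $\Sy^2\subset T^*\Sy^2$. I expect the \textbf{main obstacle} to be exactly this last point — showing the metric (not merely the complex/holomorphic-symplectic data) extends smoothly and stays non-degenerate across the collapsed $2$-sphere, since the natural frame $\{V_i\}$ and the formula $g(V_i,V_j)=-\delta_{ij}\Phi$ both degenerate there, so one must change to a frame adapted to $T^*\Sy^2$ near the zero section before taking the limit.
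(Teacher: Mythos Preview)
The paper does not actually prove this proposition; it attributes the result to Xu's paper and then, in Remark~\ref{rem:singularities}, records the explicit description of the extension at a boundary point $(a,0,0)\in\overline{N}_{q,0}\setminus N_{q,0}$: the tangent space is parametrized by $[k_1,k_2,k_3,k_4]$, the metric is $\tfrac{1}{|a|}\sum k_ik_i'$, and the complex structures are the standard quaternionic ones. So your proposal is being compared not against an argument in this paper but against that explicit-formula approach.

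For the diffeomorphism part, your plan is correct but more elaborate than needed. Once you note that $a_3$ is the unique vector orthogonal to $a_1,a_2$ with $|a_3|=|a_2|$ and the correct sign, you can simply write the inverse of $\mathrm{pr}_{12}$ as
\[
a_1+ia_2 \;\longmapsto\; \Bigl(a_1,\ a_2,\ -\tfrac{1}{|a_1|}[a_1,a_2]\Bigr),
\]
which is smooth on all of $\mathcal O_q$ because $|a_1|^2=|a_2|^2+q\geq q>0$ everywhere there. This single formula replaces your covering-space argument, the connectivity claim for $\mathcal O_q$ minus a sphere, and the separate boundary chart.

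For the hyperkähler extension your diagnosis of the obstacle is right, but the route you propose has a gap. The Poisson-map property of $\mathrm{pr}_{12}$ is for $\pi_1$, so it identifies $\omega_1$ (not $\omega_J+i\omega_K=\omega_2+i\omega_3$) with the real Kirillov--Kostant form on the orbit; your assertion that $\omega_J+i\omega_K$ pushes forward to ``the'' KKS form, and that $\mathrm{pr}_{12}$ is $I$-holomorphic, requires a separate argument not supplied by the Poisson statement. Invoking Feix--Kaledin uniqueness is also circular here, since that result presupposes a hyperk\"ahler metric defined on a neighbourhood of the zero section, which is exactly what you are trying to establish. The direct method---change from the degenerating frame $\{V_i\}$ to a frame adapted to the boundary sphere (as in Remark~\ref{rem:singularities}) and verify by hand that the limiting metric and complex structures are smooth and nondegenerate---is both what Xu does and the shortest path; the formula for the inverse above already gives you such a frame via pushforward from $\mathcal O_q$.
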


\begin{remark}
By fixing a basis, one can describe the adjoint orbits. For example, consider $\{e_1,e_2,e_3\}\subset \mathfrak{sl}(2,\mathbb{C})$, given by
\begin{equation*}
e_1 = \frac{1}{2}
\begin{bmatrix}
0 & i \\
i & 0
\end{bmatrix},\hspace{1em}
e_2 = \frac{1}{2}
\begin{bmatrix}
0 & -1 \\
1 & 0
\end{bmatrix},\hspace{1em}
e_3 = \frac{1}{2}
\begin{bmatrix}
i & 0 \\
0 & -i
\end{bmatrix}.
\end{equation*}
We identify $\mathfrak{sl}(2,\mathbb{C})$ with $\mathbb{C}^3.$ Then the \textbf{nilpotent orbit} is given by 
$$
\{(x,y,z)\in \mathbb{C}^3\setminus\{(0,0,0)\}|x^2+y^2+z^2=0\}.
$$
The \textbf{regular orbit} corresponding to $\overline{N}_{q,0}$ under ${\rm pr}_{12}$ is 
$$
x^2+y^2+z^2 = q^2.
$$
\end{remark}

\begin{remark}\label{rem:singularities}
One can give an explicit description of the extension, following Section 5 of \cite{Xu}.
For any $\tau=(a,0,0) \in \overline{N}_{q,0}\setminus N_{q,0}$, we write $a = re_1,$ for some positively-oriented orthonormal basis $\{e_1,e_2,e_3\}$ of $\su(2)$. The tangent space $T_{\tau}\overline{N}$ is given by
$${\rm Span}
\{(k_1e_2+k_2e_3, k_3e_2+k_4e_3, -k_4e_2+k_3e_3), k_1,k_2,k_3,k_4\in\R \}.$$
We may denote $[k_1,k_2,k_3,k_4]$ by the tangent vector $(k_1e_2+k_2e_3, k_3e_2+k_4e_3, -k_4e_2+k_3e_3)$. Then the extended hyperkähler metric $\overline{g}_{q,0}$ is
$$g([k_1,k_2,k_3,k_4],[k'_1,k'_2,k'_3,k'_4])=\frac{1}{r}(k_1k'_1+k_2k'_2+k_3k'_3+k_4k'_4),$$
and the extended complex structures over $\tau$ is exactly the standard quaternic relations on $[k_1,k_2,k_3,k_4].$

\end{remark}

Finally we turn to the type-2 leaves. Unlike the type-1 leaves, the hyperkähler structure on any type-2 leaf is not extendable.

\begin{propn}\label{prop:Nqr-nilp}
Let $N_{q,r}$ be any leaf of type 2. Then the hyperkähler structure cannot be extended to any point in $\overline{N}_{q,r}\setminus N_{q,r}.$ Any type-2 leaf is diffeomorphic to the nilpotent orbit of $\mathfrak{sl}(2,\mathbb{C}).$   
\end{propn}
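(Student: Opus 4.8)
The statement has two parts: non-extendability of the hyperkähler structure on a type-2 leaf $N_{q,r}$ (with $r>0$) across any boundary point, and the identification of $N_{q,r}$ with the nilpotent orbit of $\mathfrak{sl}(2,\mathbb{C})$ as a smooth manifold. I would dispatch the two parts essentially independently.

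For the non-extendability, the plan is to exploit the curvature blow-up already established in Proposition \ref{prop:unbounded}: the sectional curvature $\kappa_{q,r}(V_0,V_1)$ diverges as $Z=\langle a_3,a_3\rangle \to 0$. I would first note, using Lemma \ref{lem: transformO} together with $\sigma_O$-invariance (Theorem \ref{thm:o3sym-gen}), that any boundary point $\mu\in\overline{N}_{q,r}\setminus N_{q,r}$ can be brought to the form $(\mu_1,\mu_2,0)$; so approaching the boundary corresponds precisely to letting $Z\to 0$ (possibly after an $\SO(3)$-transformation, which is a hyperkähler isomorphism and hence does not affect extendability). Then, if the metric (equivalently, by Definition \ref{Def:hypersymplectic}, the triple of symplectic forms together with positivity) extended smoothly to a neighbourhood of $\mu$ in $\su(2)^3$, the sectional curvature would be a continuous function near $\mu$, hence bounded near $\mu$ --- contradicting Proposition \ref{prop:unbounded}. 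The only subtlety to address carefully is what ``extended'' means: one should phrase it as ``there is no smooth Riemannian (or hypersymplectic) structure on an open neighbourhood of $\mu$ in $\overline{N}_{q,r}$, or even on any smooth manifold containing $N_{q,r}$ as an open dense subset with $\mu$ in the closure, restricting to $g_{q,r}$ on $N_{q,r}$.'' Since the curvature tensor is intrinsic and continuous on any such extension, the blow-up of $\kappa_{q,r}$ along a sequence converging to $\mu$ rules this out. This contrasts with the type-1 case (Proposition \ref{prop:Nq0_reg}, Remark \ref{rem:singularities}), where the curvature is bounded by Proposition \ref{prop:type1-bounded} and the extension exists.

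For the diffeomorphism with the nilpotent orbit, I would use Proposition \ref{prop: Nqr-connected}, which gives $N_{q,r}\cong \mathbb{R}_{>0}\times\SO(3)$ for all $q,r\geq 0$; in particular $N_{q,r}\cong N_{0,0}$ as smooth manifolds. By Proposition \ref{prop:N00-nilp}, $N_{0,0}$ is diffeomorphic to the nilpotent orbit of $\mathfrak{sl}(2,\mathbb{C})$ via $\mathrm{pr}_{12}$. Composing these two diffeomorphisms yields the claim. Alternatively, one can invoke the remark following Proposition \ref{prop: Nqr-connected}: $N_{q,r}\cong T\mathbb{S}^2\setminus 0_{\mathbb{S}^2}$ for every $q,r$, and the nilpotent orbit of $\mathfrak{sl}(2,\mathbb{C})$ is well known to be diffeomorphic to $T\mathbb{S}^2\setminus 0_{\mathbb{S}^2}$ (it is the minimal nilpotent orbit, $\cong T^*\mathbb{CP}^1$ minus the zero section). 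I would present the first route as the main argument and mention the second parenthetically.

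The main obstacle is making the non-extendability argument airtight: one must be precise that the notion of ``the hyperkähler structure extending'' is intrinsic enough that the Riemann curvature tensor is forced to extend continuously. The cleanest formulation is to argue by contradiction at the level of the metric $g_{q,r}$ (which, by Definition \ref{Def:hypersymplectic}(2), is recovered from $\omega_1,\omega_2,\omega_3$): a smooth extension of the hypersymplectic data across $\mu$ would give a smooth metric near $\mu$, whose sectional curvature is continuous and therefore locally bounded, contradicting that $\kappa_{q,r}(V_0,V_1)\to\infty$ along any sequence in $N_{q,r}$ tending to $\mu$ (such sequences exist and have $Z\to 0$ by the boundary analysis above, after an $\SO(3)$-rotation). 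Everything else is bookkeeping with results already in hand.
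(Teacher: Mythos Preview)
Your proposal is correct and follows essentially the same approach as the paper: the paper's proof is two sentences, invoking Proposition~\ref{prop:unbounded} for non-extendability (curvature blows up as $\eta$ approaches $\overline{N}_{q,r}\setminus N_{q,r}=\overline{N}_{q,r}\cap\Phi^{-1}(0)$) and Propositions~\ref{Prop:connected} and~\ref{prop:N00-nilp} for the diffeomorphism with the nilpotent orbit. One minor simplification you can make: for the specific leaf $N_{q,r}$ in standard form, the boundary $\overline{N}_{q,r}\setminus N_{q,r}$ is already exactly the locus $Z=\langle a_3,a_3\rangle=0$ (since $X=Z+q+r$, $Y=Z+r$, and $\Phi=-\sqrt{XYZ}$ there), so the preliminary $\sigma_O$-rotation via Lemma~\ref{lem: transformO} is unnecessary.
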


\begin{proof}
The hyperkähler structure is not extendable, since by Proposition \ref{prop:unbounded}, the sectional curvature $\kappa_{q,r}(V_0,V_1)|_{\eta}$ approaches infinity, as $\eta$ approaches $\overline{N}_{q,r}\setminus N_{q,r} = \overline{N}_{q,r} \cap \Phi^{-1}(0).$

The second assertion follows from Proposition \ref{Prop:connected} and \ref{prop:N00-nilp}.
\end{proof}



\subsection{Kronheimer's work}
We shall review Kronheimer's construction \cite{Kronheimer1990InstantonsAT,Kronheimer-JLMS} of hyperkähler structures on adjoint orbits of $\mathfrak{g}^{\mathbb{C}},$ where $\mathfrak{g}$ is any real semisimple Lie algebra. For our purpose, we will focus on the simple case $\mathfrak{g} = \su(2).$

We say that $(\tau_1,\tau_2,\tau_3)\in \su(2)^3$ is a \textbf{regular triple}, if ${\rm span}\{\tau_1,\tau_2,\tau_3\}$ is a 1-dimensional subspace of $\su(2)$. Similarly, $(\tau_1,\tau_2)\in \su(2)^2$ is a \textbf{regular pair} if the space ${\rm span}\{\tau_1,\tau_2\}$ is a 1-dimensional subspace of $\su(2)$. Consider \textbf{Nahm's equations}, i.e.,
\begin{equation}\label{Eqt:Nahm-equation}
\begin{cases}
    \dot{B_1} = -[B_2, B_3]\\
    \dot{B_2} = -[B_3, B_1]\\
    \dot{B_3} = -[B_1, B_2]\\
\end{cases}    
\end{equation}
for $(B_1, B_2, B_3)\colon(-\infty, 0] \rightarrow \su(2)^3.$  Let $(\tau_1,\tau_2,\tau_3)\in \su(2)^3$ be any regular triple, and let $M(\tau_1,\tau_2,\tau_3)$ be the corresponding solution space for Equations \eqref{Eqt:Nahm-equation} with boundary conditions 
$$\exists g\in {\rm SU}(2)~\text{such that }~\lim\limits_{t\to-\infty}B_i(t)={\rm Ad}_g\tau_i,\quad i=1,2,3.$$
By definition, if there exists $g\in \rm SU(2)$ such that $\tau_i^{\prime} = \rm Ad_g\tau_i,$ then $M(\tau_1,\tau_2,\tau_3)=M(\tau_1^{\prime},\tau_2^{\prime},\tau_3^{\prime})$.
Using gauge theory and infinite dimensional 
hyperkähler reduction, Kronheimer proved that
\begin{thm}\label{thm:Kronheimer}\cite{Kronheimer-JLMS}
Let $(\tau_1,\tau_2,\tau_3)\in\su(2)^3$ be a regular triple. Then
\begin{itemize}
\item[(1)] $M(\tau_1,\tau_2,\tau_3)$ is a smooth manifold, with a hyperkähler structure $(\tilde{g},\tilde{I},\tilde{J},\tilde{K})$;
\item[(2)] further, if $(\tau_2,\tau_3)$ is regular, then the map 
\begin{equation}
\psi\colon M(\tau_1,\tau_2,\tau_3)\to \mathcal{O}_{\tau_2+i\tau_3},\quad (B_1,B_2,B_3)\mapsto B_2(0)+iB_3(0),
\end{equation}
is a diffeomorphism. 
\end{itemize}
\end{thm}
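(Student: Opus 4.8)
The plan is to treat Theorem \ref{thm:Kronheimer} as a special, low-rank instance of Kronheimer's general result \cite{Kronheimer-JLMS,Kronheimer1990InstantonsAT}, but to give a reasonably self-contained account in the $\su(2)$ case where the analysis simplifies dramatically. I would first set up the relevant infinite-dimensional framework: fix a regular triple $(\tau_1,\tau_2,\tau_3)$, let $\mathcal{A}$ be the affine space of triples $(B_1,B_2,B_3)\colon(-\infty,0]\to\su(2)^3$ with the prescribed exponential decay to $\mathrm{Ad}_g\tau_i$ (in suitable weighted Sobolev norms), and let $\mathcal{G}$ be the gauge group of maps $(-\infty,0]\to\mathrm{SU}(2)$ that tend to the identity at $-\infty$ and are unconstrained at $t=0$. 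The three $L^2$-type symplectic forms on $\mathcal{A}$, together with the $\mathcal{G}$-action, yield a hyperkähler moment map whose three components are precisely the three Nahm equations in \eqref{Eqt:Nahm-equation}; thus $M(\tau_1,\tau_2,\tau_3)$ is the hyperkähler quotient $\mathcal{A}\cap\{\text{Nahm}\}/\mathcal{G}$. Standard elliptic regularity and the fact that $\mathcal{G}$ acts freely (because a gauge transformation fixing a solution and trivial at $-\infty$ must be trivial, by a unique-continuation/ODE argument, using regularity of the triple) give that the quotient is a smooth manifold, and the hyperkähler reduction theorem of Hitchin--Karlhede--Lindström--Roček endows it with $(\tilde g,\tilde I,\tilde J,\tilde K)$. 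This establishes (1).

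For (2), the key object is the complex gauge group $\mathcal{G}^{\mathbb{C}}$ of maps into $\mathrm{SL}(2,\mathbb{C})$, and the standard repackaging of Nahm's equations into one complex ODE plus one real moment-map equation: setting $\beta = B_2+iB_3$ and $\alpha = B_1$, the last two Nahm equations become $\dot\beta = [\beta,\alpha^{*}]$-type / $\dot\beta+[\alpha,\beta]=0$ (the complex equation) and the first becomes the real equation $\dot\alpha + \tfrac12[\beta,\beta^{*}]+\dots=0$. The complex equation alone says that $\beta(t)$ is, up to complex gauge, a constant $\mathrm{SL}(2,\mathbb{C})$-conjugate, so $\beta(0)$ determines the complex-gauge orbit. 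The map $\psi$ is then constructed as: solve the complex equation, evaluate $\beta(0)=B_2(0)+iB_3(0)$, and observe this lands in the adjoint orbit $\mathcal{O}_{\tau_2+i\tau_3}$ because the boundary condition forces $\beta(t)\to\mathrm{Ad}_g(\tau_2+i\tau_3)$. The heart of the proof of bijectivity is the Kempf--Ness / Donaldson-type theorem that each complex-gauge orbit of a solution of the complex equation (with the right decay, and with $(\tau_2,\tau_3)$ regular so that the limiting conjugacy class is semisimple/"stable") contains a unique real-gauge orbit solving the full Nahm system — equivalently, a unique zero of the real moment map up to $\mathcal{G}$. I would prove this via the gradient flow of the associated functional on the space of complex gauge transformations, showing existence of a minimizer (using the decay estimates and the regularity of $(\tau_2,\tau_3)$ to get properness/coercivity) and uniqueness (using convexity of the functional along geodesics in $\mathcal{G}^{\mathbb{C}}/\mathcal{G}$). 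Surjectivity of $\psi$ onto $\mathcal{O}_{\tau_2+i\tau_3}$ then follows because every element of the orbit arises as $\beta(0)$ of an explicit solution of the complex equation with the correct boundary behavior, and injectivity follows from the uniqueness in the Kempf--Ness step together with the fact that two solutions with the same $\beta(0)$ differ by a complex gauge transformation. Smoothness of $\psi$ and of its inverse is then a consequence of the smooth dependence of solutions of ODEs on parameters plus the implicit function theorem applied to the moment-map equation.

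I expect the main obstacle to be the analytic core of part (2): establishing the existence and uniqueness of a solution to the \emph{real} Nahm equation within each complex-gauge orbit — i.e. the infinite-dimensional Kempf--Ness statement — since this requires care with the weighted function spaces on the half-line $(-\infty,0]$, the precise decay rates dictated by the eigenvalues of $\mathrm{ad}(\tau_i)$ (here the regularity hypothesis on $(\tau_2,\tau_3)$ is exactly what makes the relevant limiting operator have the right spectral gap), and the behaviour of the gradient flow at the $t=0$ boundary where no constraint is imposed. A secondary technical point is verifying that $\mathcal{G}$ acts freely so that the quotient really is a manifold, which again uses regularity of the triple. In the write-up I would either carry out these estimates in the concrete $2\times2$ setting — where $\su(2)$ has a single positive root and everything reduces to scalar ODEs after diagonalizing — or, more economically, simply cite \cite{Kronheimer-JLMS} and \cite{Kronheimer1990InstantonsAT} for the general statement and only spell out the identification of the moment map with \eqref{Eqt:Nahm-equation} and the formula for $\psi$, since those are the parts actually used in the sequel.
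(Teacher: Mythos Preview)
The paper does not prove Theorem~\ref{thm:Kronheimer} at all: it is stated with the citation \cite{Kronheimer-JLMS} and used as a black box, with only the explicit description of the tangent space (Lemma~\ref{lem:tangentspace}) and the formulas \eqref{Eqt:regular-metric}--\eqref{Eqt:complex-regular} for $(\tilde g,\tilde I,\tilde J,\tilde K)$ recorded afterwards for later use. Your final ``more economical'' option---simply cite Kronheimer and spell out only the moment-map identification and the formula for $\psi$---is exactly what the paper does, so that part of your proposal matches perfectly.

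Your longer sketch is a faithful outline of Kronheimer's actual argument (infinite-dimensional hyperk\"ahler quotient, complex gauge group, Kempf--Ness/Donaldson-type existence and uniqueness within each complex orbit), and there is no mathematical gap in it as a plan; it is just far more than what the paper undertakes. If you were to include it, it would be expository background rather than a contribution of the paper, and the analytic core you flag (properness of the functional on $\mathcal{G}^{\mathbb{C}}/\mathcal{G}$ in weighted spaces) is indeed the nontrivial point---but again, the paper sidesteps all of this by citation.
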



For the nilpotent orbit of $\mathfrak{sl}(2,\mathbb{C})$, Kronheimer used the same strategy to establish the existence of a hyperkähler structure.
Denote by $M(0,0,0)$ the solution space for the Equations \eqref{Eqt:Nahm-equation} satisfying
the boundary conditions
$$\lim\limits_{s\to -\infty} B_i(s)=0,\quad i=1,2,3.$$
\begin{thm}\label{thm:Kr-nilp}\cite{Kronheimer1990InstantonsAT}
$M(0,0,0)$ is a hyperkähler manifold and is diffeomorphic to the nilpotent orbit of $\mathfrak{sl}(2,\mathbb{C})$.
\end{thm}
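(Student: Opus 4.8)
The plan is to prove Theorem~\ref{thm:Kr-nilp}-style statement by an infinite-dimensional hyperk\"ahler reduction, exactly parallel to the regular case in Theorem~\ref{thm:Kronheimer}, but with the noncompact limit $0$ replacing the coadjoint-orbit limit $\mathrm{Ad}_g\tau_i$. First I would set up the configuration space $\mathcal{A}$ of triples $(B_1,B_2,B_3)\colon (-\infty,0]\to \su(2)^3$ with suitable decay (each $B_i(s)\to 0$ as $s\to-\infty$, in a weighted Sobolev norm so that the limit is attained with exponential rate), made into an infinite-dimensional flat hyperk\"ahler (affine) Banach manifold: the three complex structures act on the tangent vectors $(b_1,b_2,b_3)$ by the standard quaternion rotations, and the flat $L^2$ metric $\int_{-\infty}^0 \sum_i \langle b_i,b_i\rangle\, ds$ is Riemannian because the Killing form on $\su(2)$ is negative definite (so one uses $-\langle\cdot,\cdot\rangle$). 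The gauge group $\mathcal{G}$ of maps $g\colon(-\infty,0]\to \mathrm{SU}(2)$ with $g(0)=\mathbb{1}$ and $g\to\mathbb{1}$ at $-\infty$ acts on $\mathcal{A}$ preserving all three symplectic forms, and the associated hyperk\"ahler moment map $\mu=(\mu_1,\mu_2,\mu_3)$ has components whose vanishing is precisely Nahm's equations~\eqref{Eqt:Nahm-equation}; thus $M(0,0,0)=\mu^{-1}(0)/\mathcal{G}$ inherits a hyperk\"ahler structure $(\tilde g,\tilde I,\tilde J,\tilde K)$ provided the quotient is smooth.

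The smoothness of $M(0,0,0)$ is the first technical step: one must show $\mathcal{G}$ acts freely (a solution with a stabilizer would force $(B_1,B_2,B_3)$ to lie in a proper subalgebra, and one checks that the boundary condition at $0$ rules out the stabilizer being nontrivial for the relevant solutions — or, following Kronheimer, one restricts to the locus where this holds and verifies it is everything) and that $0$ is a regular value of $\mu$, i.e.\ $d\mu$ is surjective with closed range, which is a Fredholm/elliptic estimate for the linearized Nahm operator on the half-line with the chosen weights. Next I would identify the quotient with the nilpotent orbit: the map $\psi\colon (B_1,B_2,B_3)\mapsto B_2(0)+iB_3(0)\in\mathfrak{sl}(2,\mathbb{C})$ is $\mathcal{G}$-invariant (since $g(0)=\mathbb{1}$) and lands in the nilpotent cone, because the boundary condition at $-\infty$ together with the conserved quantities of Nahm's flow forces $\langle B_2(0)+iB_3(0),\,B_2(0)+iB_3(0)\rangle$ (the complex Killing form, which cuts out the nilpotent orbit) to vanish; one then shows $\psi$ is a bijection onto the nilpotent orbit minus $\{0\}$ by solving the Nahm boundary value problem — existence and uniqueness of a solution decaying to $0$ at $-\infty$ with prescribed value at $0$ — and that $\psi$ and $\psi^{-1}$ are smooth, giving the claimed diffeomorphism.

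The key steps in order are therefore: (i) construct the flat hyperk\"ahler Banach affine space $\mathcal{A}$ with the gauge action and the three moment maps recovering Nahm's equations; (ii) prove $0$ is a regular value and the action is free, so $M(0,0,0)=\mu^{-1}(0)/\mathcal{G}$ is a smooth hyperk\"ahler manifold of the expected dimension ($\dim_{\mathbb{R}}=4$); (iii) show the evaluation map $\psi$ descends to a diffeomorphism onto the nilpotent orbit by an existence-uniqueness analysis of the Nahm ODE with a zero limit at $-\infty$. I expect the main obstacle to be step (ii)–(iii) at the noncompact end: with the limit equal to $0$ rather than a regular semisimple element, the Nahm operator on $(-\infty,0]$ has continuous spectrum touching $0$, so the naive $L^2$ theory fails and one must work in exponentially weighted Sobolev spaces with a carefully chosen weight (small enough to admit the polynomially-decaying solutions of the model problem, large enough to retain Fredholmness), and then prove the relevant decay estimates to establish both the Fredholm property of $d\mu$ and the asymptotics needed for $\psi$ to be proper and bijective. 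Once these analytic points are in place, the hyperk\"ahler structure on $M(0,0,0)$ and its identification with the nilpotent orbit follow by the standard reduction formalism, exactly as cited from \cite{Kronheimer1990InstantonsAT}.
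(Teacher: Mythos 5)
You should first note that the paper does not prove Theorem \ref{thm:Kr-nilp} at all: it is quoted from \cite{Kronheimer1990InstantonsAT}, so the only meaningful comparison is with Kronheimer's cited construction, which your outline reproduces in broad strokes — flat quaternionic affine space of Nahm data on $(-\infty,0]$, based gauge group, three moment maps whose zero set is Nahm's equations \eqref{Eqt:Nahm-equation}, and the evaluation map $(B_1,B_2,B_3)\mapsto B_2(0)+iB_3(0)$, whose image is nilpotent because $\langle\beta(t),\beta(t)\rangle$ with $\beta=B_2+iB_3$ is conserved along the complex flow and tends to $0$ at $-\infty$. That skeleton is correct and is indeed the strategy behind the cited result.

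The genuine gap is in your step (ii), the analysis at the $t\to-\infty$ end. When the limit is $0$, the nontrivial solutions decay only polynomially — e.g. $B_i(t)=e_i/(t-c)$ with $c>0$ decays like $1/\lvert t\rvert$ — so your proposed framework of ``exponentially weighted Sobolev spaces with a weight small enough to admit the polynomially-decaying solutions'' is internally contradictory: any weight $e^{\eta\lvert t\rvert}$ with $\eta>0$ excludes exactly these solutions, while $\eta\le 0$ loses the Fredholm property you need for $d\mu$. This is precisely the point where the regular-orbit setup of Theorem \ref{thm:Kronheimer} and Lemma \ref{lem:tangentspace} does not transplant; the paper itself flags this in the remark following Lemma \ref{lem:tangentspace}, where a genuinely different norm is required for $M(0,0,0)$, with the half-line analysis carried out in \cite{Kovalev1996NahmsEA}. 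Kronheimer's own route in \cite{Kronheimer1990InstantonsAT} avoids the degenerate weighted theory altogether by reinterpreting Nahm data as $\SU(2)$-invariant instantons on $\R^4\setminus\{0\}$ and invoking general properties of instanton moduli spaces, together with a complexified-gauge-group argument identifying the quotient with the nilpotent variety, rather than the direct existence–uniqueness shooting argument you propose for $\psi$. Two smaller points: freeness of the based gauge group ($g(0)=\mathbb{1}$) is automatic, so that is not where the difficulty lies; and your phrase ``nilpotent orbit minus $\{0\}$'' conflates the nilpotent cone with the orbit — the orbit already excludes $0$, and you must separately dispose of the trivial solution $B\equiv 0$, which satisfies your boundary conditions but maps to $0$, reflecting that Kronheimer's moduli space is the cone with the orbit as its smooth locus.
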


Next we recall the definition of the natural hyperkähler structure  $(\tilde{g},\tilde{I},\tilde{J},\tilde{K})$ on $M(\tau_1,\tau_2,\tau_3)$.   Similarly, $M(0,0,0)$ possesses an analogous structure, so we omit the details here for brevity. The following  lemma is a direct consequence of Kronheimer's quotient construction. 
\begin{lemma}\cite{Kronheimer-JLMS} \label{lem:tangentspace}
The tangent space of $M(\tau_1,\tau_2,\tau_3)$ at a point $(B_1, B_2, B_3)$ is given by solutions $(b_0,b_1,b_2,b_3)\colon(-\infty,0]\rightarrow \mathfrak{g}^4$ to the following ODE:
\begin{equation}\label{Eqt:tangent-vector}
\begin{cases}
    \dot{b}_0 +[B_1, b_1] + [B_2, b_2] + [B_3, b_3] = 0,\\
    \dot{b}_1 -[B_1, b_0] + [B_2, b_3] - [B_3, b_2] = 0,\\
    \dot{b}_2  -[B_1, b_3] - [B_2, b_0] + [B_3, b_1] = 0,\\
    \dot{b}_3 +[B_1, b_2] - [B_2, b_1] - [B_3, b_0] = 0,
\end{cases} 
\end{equation}
and satisfying the norm
$$||b||=\mathop{{\rm sup}}\limits_{t,j}(e^{\eta|t|}|b_j|)+\mathop{{\rm sup}}\limits_{t,j}(e^{\eta|t|}|\dot{b}_j|)<\infty,\quad \text{for some}~\eta>0.$$
\end{lemma}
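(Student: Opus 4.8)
The statement to prove is Lemma~\ref{lem:tangentspace}, which describes the tangent space to Kronheimer's moduli space $M(\tau_1,\tau_2,\tau_3)$ as the space of solutions to the linearized Nahm equations with an exponential decay norm. Since the excerpt attributes this lemma directly to Kronheimer and labels it ``a direct consequence of Kronheimer's quotient construction,'' the natural plan is to unpack that quotient construction explicitly rather than reprove hyperkähler reduction from scratch.

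\medskip

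\noindent\emph{Plan of proof.} First I would set up the ambient configuration space: let $\mathcal{A}$ be the affine space of quadruples $(B_1,B_2,B_3)$ of $\su(2)$-valued functions on $(-\infty,0]$ with the appropriate regularity and boundary behaviour $\lim_{t\to-\infty}B_i(t)=\Ad_g\tau_i$, so that the tangent space to $\mathcal{A}$ at a point is the space of quadruples $(b_1,b_2,b_3)$ decaying like $e^{-\eta|t|}$. The gauge group $\mathcal{G}$ consists of maps $g\colon(-\infty,0]\to\SU(2)$ tending to the identity at $-\infty$ (and satisfying the relevant endpoint condition at $0$), acting on $(B_1,B_2,B_3)$ by the standard formula $g\cdot B_i = gB_ig^{-1}-\dot g g^{-1}$ combined with the $\SO(3)$-rotation of the triple; its Lie algebra is the space of $\su(2)$-valued functions $b_0$, and the infinitesimal action sends $b_0$ to the quadruple $(-\dot b_0 - [B_1,b_0]\,\text{-type terms})$. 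I would then identify $M(\tau_1,\tau_2,\tau_3)$ as $\mu^{-1}(0)/\mathcal{G}$ where $\mu$ is the hyperkähler moment map whose three components are precisely the three Nahm equations $\dot B_i + \tfrac12\varepsilon_{ijk}[B_j,B_k]$, so that $\mu^{-1}(0)$ is the solution set to \eqref{Eqt:Nahm-equation}.

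\medskip

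\noindent The second step is the standard identification of the tangent space of a reduced space: at a point $p=(B_1,B_2,B_3)\in\mu^{-1}(0)$, one has $T_pM \cong \ker(d\mu_p) / \operatorname{im}(\rho_p)$, where $\rho_p$ is the infinitesimal gauge action. The key linear-algebra move is to replace this quotient by the orthogonal complement of $\operatorname{im}(\rho_p)$ inside $\ker(d\mu_p)$, i.e. by $\ker(d\mu_p)\cap \ker(\rho_p^*)$, using the natural $L^2$-type pairing on the weighted function spaces. I would compute $d\mu_p$ explicitly: linearizing the three Nahm equations at $(B_1,B_2,B_3)$ in the direction $(b_1,b_2,b_3)$ gives the last three equations of \eqref{Eqt:tangent-vector} \emph{without} the $b_0$ terms, while the $b_0$ terms are exactly what the adjoint $\rho_p^*$ of the gauge action contributes. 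A short computation shows $\rho_p^*(b_1,b_2,b_3) = \dot b_0$-free expression $\sum_i[B_i,b_i]$, whose vanishing is the first equation of \eqref{Eqt:tangent-vector}; and incorporating the gauge direction $b_0$ by the ``$b_0 = $ zeroth component'' bookkeeping trick (viewing $(b_0,b_1,b_2,b_3)$ as a connection-plus-Higgs quadruple on $\R^4$, the form in which Nahm's equations are the ASD equations) converts the pair of conditions $d\mu_p(b)=0$, $\rho_p^*(b)=0$ into the single coupled system \eqref{Eqt:tangent-vector}. Finally I would note that elements of $\ker d\mu_p \cap \ker\rho_p^*$ automatically inherit the exponential decay: linearizing the boundary condition forces $b_j\to 0$, and an ODE/elliptic-estimate argument (the linearized operator is asymptotically $\tfrac{d}{dt}\pm\ad_{\tau_i}$, which is invertible on the relevant weighted spaces) upgrades this to the stated norm bound $\|b\|<\infty$ with some $\eta>0$.

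\medskip

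\noindent\emph{Main obstacle.} The genuinely delicate point is the analytic one: justifying that the $L^2$-orthogonal slice to the gauge orbit is transverse, that the gauge group action is free and proper on $\mu^{-1}(0)$ so that the quotient is a smooth manifold with the expected tangent space, and that solutions of the linearized system really do decay exponentially (rather than merely lying in some Sobolev space). All of this rests on the invertibility of the asymptotic operator $\tfrac{d}{dt}+\ad_{\tau_i}$ on exponentially weighted spaces, which in turn uses that $(\tau_1,\tau_2,\tau_3)$ is a regular triple so that the relevant stabilizer is as small as possible. Since the excerpt explicitly allows me to cite Kronheimer \cite{Kronheimer-JLMS} for the quotient construction and the smoothness of $M(\tau_1,\tau_2,\tau_3)$ (Theorem~\ref{thm:Kronheimer}), I would black-box these Fredholm/regularity facts and present the proof as: (i) recall $M = \mu^{-1}(0)/\mathcal{G}$; (ii) recall $T_pM = \ker d\mu_p \cap (\operatorname{im}\rho_p)^\perp$ from the general theory of reduction; (iii) carry out the explicit linearization to show this space is exactly the solution space of \eqref{Eqt:tangent-vector}; (iv) invoke the weighted elliptic estimate for the exponential decay. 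The only real ``new'' content is the bookkeeping in step (iii) matching signs and indices, which is routine.
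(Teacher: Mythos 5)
Your proposal is correct and follows essentially the same route as the paper, which does not prove this lemma itself but cites Kronheimer and, in the remark immediately after, sketches exactly the quotient picture you describe ($M(\tau_1,\tau_2,\tau_3)\cong\mathcal{A}/\mathcal{G}$, tangent vectors of $M$ identified with tangent vectors of $\mathcal{A}$ satisfying the gauge-fixing condition), with the analytic Fredholm/decay facts black-boxed to \cite{Kronheimer-JLMS} just as you do. The only small imprecision is in your description of the adjoint of the gauge action: at a point $(0,B_1,B_2,B_3)$ the Coulomb-type gauge-fixing condition is $\dot b_0+\sum_i[B_i,b_i]=0$, i.e.\ it already contains the $\dot b_0$ term rather than being a ``$\dot b_0$-free expression,'' but this is exactly the routine sign/index bookkeeping you flag in step (iii).
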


\begin{remark}
Lemma \ref{lem:tangentspace} remains valid for $M(0,0,0)$, if we put a different norm. See \cite{Kovalev1996NahmsEA} for a proof.
\end{remark}

\begin{remark}
To better understand the above result, it is helpful to sketch Kronheimer's construction of $M(\tau_1,\tau_2,\tau_3)$ as a quotient. Let $\mathcal{A}$ be the space of solutions $(A_0,A_1,A_2,A_3): (-\infty, 0] \rightarrow \su(2)^4,$ to the following ODE:
\begin{equation}
\dot{A_i} = -[A_0,A_i]-\epsilon_{ijk}[A_j,A_k], \hspace{1em} i,j,k=1,2,3.
\end{equation}
which converges to the constant solution $(0,\rm Ad_g\tau_1, \rm Ad_g\tau_2, \rm Ad_g\tau_3)$ for some $g\in G$, while its derivative $\dot{A_i}$ converges to $0$, all with exponential decays. Such solutions can be understood as instantons with poles, and $\mathcal{A}$ is a Banach manifold. 

One can show that the gauge transformations, $\mathcal{G}$, acts on $\mathcal{A}.$ Kronheimer proved that $\mathcal{A}/\mathcal{G} \cong M(\tau_1,\tau_2,\tau_3).$ Now Lemma \ref{lem:tangentspace} states that, each tangent vector of $M(\tau_1,\tau_2,\tau_3)$ can be identified with a tangent vector of $\mathcal{A}$ which satisfies the `gauge-fixing' condition.   
\end{remark}

For any $(\tau_1,\tau_2,\tau_3)\in \su(2)^3$ be either a regular triple, or $(0,0,0)$. The hyperkähler structure $(\tilde{g},\tilde{I},\tilde{J},\tilde{K})$ on $M(\tau_1,\tau_2,\tau_3)$ is given as follows.
For any two tangent vectors $b=(b_0,b_1,b_2,b_3)$ and $c=(c_0,c_1,c_2,c_3)$, 
\begin{equation}\label{Eqt:regular-metric}
\begin{aligned}
\tilde{g}(b,c) = \sum_{j=0}^3\int_{-\infty}^0 \langle b_j(s), c_j(s)\rangle ds,
\end{aligned}    
\end{equation}
further,
\begin{equation}\label{Eqt:complex-regular}
\begin{aligned}
\tilde{I}(b_0,b_1,b_2,b_3) &= (-b_1,b_0,-b_3,b_2)\\ \tilde{J}(b_0,b_1,b_2,b_3) &= (-b_2,b_3,b_0,-b_1)\\
\tilde{K}(b_0,b_1,b_2,b_3) &= (-b_3,-b_2,b_1,b_0).
\end{aligned}    
\end{equation}

As explained in \cite{Kronheimer-JLMS}, every rotation $O\in \operatorname{SO}(3)$ induces a symmetry i.e. a hyperkähler-isomorphism. The following result can be verified by direct computation.

\begin{propn}
For any $O\in \operatorname{SO}(3),$ the natural map 
\begin{equation}
    \begin{aligned}
        \tilde{\sigma}_{O}: M(\tau_1,\tau_2,\tau_3) &\rightarrow M((\tau_1,\tau_2,\tau_3)O),\\
        (B_1,B_2,B_3)&\mapsto (B_1,B_2,B_3)O
    \end{aligned}
\end{equation}
is an isomorphism of hyperkähler structures.
\end{propn}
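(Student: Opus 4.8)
The plan is to verify directly that $\tilde\sigma_O$ is a well-defined diffeomorphism intertwining the metric and the three complex structures up to the action of $O$, using the explicit descriptions of the solution space, the tangent spaces, and the hyperkähler data recorded in Lemma~\ref{lem:tangentspace} and Equations \eqref{Eqt:regular-metric}--\eqref{Eqt:complex-regular}. First I would check well-definedness: if $(B_1,B_2,B_3)$ solves Nahm's equations \eqref{Eqt:Nahm-equation}, then so does $(B_1,B_2,B_3)O$. Writing $B_i' = \sum_k B_k O_{ki}$, one substitutes into \eqref{Eqt:Nahm-equation} and uses the fact that Nahm's equations can be written in the $\operatorname{SO}(3)$-covariant form $\dot B_i = -\tfrac12\sum_{j,k}\epsilon_{ijk}[B_j,B_k]$, together with the identity $\sum_{i}\epsilon_{ijk}O_{ia}O_{jb}O_{kc} = \det(O)\,\epsilon_{abc} = \epsilon_{abc}$ for $O\in\operatorname{SO}(3)$. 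The same computation shows the boundary/asymptotic condition is respected: the constant limit $\operatorname{Ad}_g\tau_i$ transforms to $\operatorname{Ad}_g(\sum_k\tau_k O_{ki})$, so $(B_1,B_2,B_3)O$ satisfies the boundary condition attached to the regular triple $(\tau_1,\tau_2,\tau_3)O$. Hence $\tilde\sigma_O$ maps $M(\tau_1,\tau_2,\tau_3)$ to $M((\tau_1,\tau_2,\tau_3)O)$, and its inverse is $\tilde\sigma_{O^{-1}}$, so it is a diffeomorphism.

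Next I would identify the differential of $\tilde\sigma_O$ on tangent vectors. Since $\tilde\sigma_O$ is the restriction of a (constant) linear map on the ambient space of paths, its differential sends a tangent vector $b = (b_0,b_1,b_2,b_3)$ at $(B_1,B_2,B_3)$ to $(b_0, b_1, b_2, b_3)$ acted on by the block matrix $\operatorname{diag}(1,O)$, i.e. to $(b_0, \sum_k b_k O_{k1}, \sum_k b_k O_{k2}, \sum_k b_k O_{k3})$. One must check this again solves the linearized system \eqref{Eqt:tangent-vector} at the new point and satisfies the exponential-decay norm; the norm is manifestly $O$-invariant since $O$ is orthogonal, and solving \eqref{Eqt:tangent-vector} follows from the same $\epsilon$-tensor covariance used above, now applied to the linearization (which is again $\operatorname{SO}(3)$-covariant in the triple $(B_1,B_2,B_3)$ and the triple $(b_1,b_2,b_3)$ simultaneously). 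With the differential in hand, preservation of the metric is immediate from \eqref{Eqt:regular-metric}: the $b_0$-term is untouched and the sum $\sum_{j=1}^3\int\langle b_j,c_j\rangle\,ds$ is invariant under the orthogonal change $b_j\mapsto \sum_k b_k O_{kj}$.

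For the complex structures, the point is that $\tilde I,\tilde J,\tilde K$ of \eqref{Eqt:complex-regular} can be packaged as left multiplication by the quaternion units on the $\mathbb{H}$-module structure of $(b_0,b_1,b_2,b_3)$, while the differential of $\tilde\sigma_O$ acts by the matrix $\operatorname{diag}(1,O)$, which is precisely the image of $O$ under the double cover $\operatorname{Spin}(3)=\operatorname{SU}(2)\to\operatorname{SO}(3)$ acting by right multiplication (conjugation) on the imaginary quaternions. Concretely, one checks the relations $(\tilde\sigma_O)_*\circ(\tilde I,\tilde J,\tilde K) = (\tilde I,\tilde J,\tilde K)O \circ (\tilde\sigma_O)_*$ by a direct matrix computation: both sides send $b$ to a fixed orthogonal reshuffling of its components, and comparing the two $4\times4$ (or $2\times2$ over $\mathbb{H}$) matrices reduces to the defining property that left multiplication by a unit quaternion commutes with right multiplication by a unit quaternion. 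This yields exactly the isomorphism-of-hyperkähler-structures condition of the relevant definition, with the rotation $O$. The same argument applies verbatim to $M(0,0,0)$ with its analogous structure, the only change being the asymptotic condition $B_i\to 0$, which is trivially $O$-invariant, and the different norm, which is still orthogonally invariant.

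The main obstacle is essentially bookkeeping rather than conceptual: one must be careful that the $\epsilon$-tensor conventions in Nahm's equations \eqref{Eqt:Nahm-equation} and the linearized equations \eqref{Eqt:tangent-vector} are genuinely $\operatorname{SO}(3)$-covariant under the \emph{right} action $(B_1,B_2,B_3)\mapsto(B_1,B_2,B_3)O$ that appears in the statement (as opposed to a left action or an action twisted by a transpose/inverse), and that the sign conventions in \eqref{Eqt:complex-regular} are compatible with the orientation convention in the definition of isomorphism (which allows only $O\in\operatorname{SO}(3)$, not all of $\operatorname{O}(3)$ — indeed $\det O = 1$ is exactly what makes the $\epsilon$-tensor identity work without an extra sign, and also what keeps the quaternionic reshuffling inside the group generated by the standard units). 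Once the conventions are pinned down, every step is a short direct computation, which is why the statement is phrased as verifiable ``by direct computation.''
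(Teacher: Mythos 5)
Your proposal is correct and is essentially the paper's argument: the paper merely asserts the result ``can be verified by direct computation,'' and your verification — $\operatorname{SO}(3)$-covariance of Nahm's equations \eqref{Eqt:Nahm-equation}, of the boundary conditions, and of the linearized/gauge-fixing system \eqref{Eqt:tangent-vector}, orthogonal invariance of the metric \eqref{Eqt:regular-metric}, and the quaternionic interpretation of \eqref{Eqt:complex-regular} — is exactly that computation. One small correction to the last step: $\operatorname{diag}(1,O)$ on $(b_0;b_1,b_2,b_3)$ is quaternionic \emph{conjugation} $x\mapsto \bar q x q$ rather than plain right multiplication, and the relevant identity is $F_*\circ L_a\circ F_*^{-1}=L_{\bar q a q}$, so the three left multiplications are carried to their $O$-rotated linear combinations (not fixed), which is precisely the intertwining up to an element of $\operatorname{SO}(3)$ that the definition of isomorphism requires.
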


By applying Lemma \ref{lem: transformO}, for any regular triple $(\tau_1,\tau_2,\tau_3)$, there exists $O\in \operatorname{SO}(3)$, such that $(\tau_1,\tau_2,\tau_3)O = (0, qe_1, 0)$ for some $q>0$ and unit vector $e_1 \in \su(2).$ Thus we get an isomorphism between $M(\tau_1,\tau_2,\tau_3)$ and $M(0, qe_1, 0).$ For simplicity, we define $M_q:=M(0,qe_1,0)$ for $q\geq 0.$ 

\subsection{The main result}
Since both constructions of Kronheimer and Xu yield hyperkähler structures on adjoint orbits of $\mathfrak{sl}(2,\mathbb{C}),$ we shall study  whether they are isomorphic. Notice that, there is a natural identification between type-0 and type-1 leaves of $\su(2)^3_{\star}$ and Kronheimer's manifolds. Indeed, the identification map is the evaluation at $t=0,$ followed by a cyclic permutation.

\begin{lemma}
For any $q>0$, let $M_q= M(0,qe_1,0)$ be the solutions space as the last subsection, and let $N_{q,0}$ be leaf of type 1. Then the natural map
\begin{equation}
    \begin{aligned}
        h_q: M_q &\rightarrow \overline{N}_{q,0},\\
        (B_1,B_2,B_3) &\mapsto (B_2(0),B_3(0),B_1(0))
    \end{aligned}    
\end{equation}
is a diffeomorphism.
\end{lemma}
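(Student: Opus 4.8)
The plan is to combine the diffeomorphism $\psi\colon M_q \to \mathcal{O}_{qe_2+i\cdot 0}$ from Theorem \ref{thm:Kronheimer}(2) (applied with $(\tau_1,\tau_2,\tau_3)=(0,qe_1,0)$, which is a regular triple and whose last two entries $(qe_1,0)$ form a regular pair) with the identification $\mathrm{pr}_{12}$-type diffeomorphism between $\overline{N}_{q,0}$ and the corresponding regular orbit from Proposition \ref{prop:Nq0_reg}, and then check that the composition agrees with $h_q$ up to the cyclic relabelling of the three $\su(2)$-coordinates. Concretely, $h_q$ sends $(B_1,B_2,B_3)$ to $(B_2(0),B_3(0),B_1(0))$, so applying $\mathrm{pr}_{12}$ to the target gives $B_2(0)+iB_3(0)$, which is exactly $\psi(B_1,B_2,B_3)$. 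Hence $\mathrm{pr}_{12}\circ h_q = \psi$, and since both $\psi$ and (the restriction of) $\mathrm{pr}_{12}$ on $\overline{N}_{q,0}$ are diffeomorphisms onto the same regular orbit, $h_q$ is a diffeomorphism.

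In more detail, the steps I would carry out are: (i) verify that $(0,qe_1,0)$ is a regular triple and $(qe_1,0)$ a regular pair, so that both parts of Theorem \ref{thm:Kronheimer} apply and $\psi$ is a diffeomorphism $M_q\xrightarrow{\sim}\mathcal{O}_{qe_1}$ (writing $e_1$ for the chosen unit vector); (ii) recall from Proposition \ref{prop:Nq0_reg} that $\mathrm{pr}_{12}$ restricts to a diffeomorphism $\overline{N}_{q,0}\xrightarrow{\sim}\mathcal{O}'$ onto a regular orbit, and identify $\mathcal{O}'$ with $\mathcal{O}_{qe_1}$ (both are the orbit $x^2+y^2+z^2=q^2$ in the coordinates of the Remark following Proposition \ref{prop:Nq0_reg}, after matching normalizations of the Killing form); (iii) check that the image of $h_q$ actually lands in $\overline{N}_{q,0}$ — that is, that $(B_2(0),B_3(0),B_1(0))$ satisfies the defining equations of $\overline{N}_{q,0}$, namely the orthogonality relations $\langle a_i,a_j\rangle=0$, the Casimir values $\langle a_1,a_1\rangle-\langle a_2,a_2\rangle=q$, $\langle a_2,a_2\rangle-\langle a_3,a_3\rangle=0$, and $\langle a_1,[a_2,a_3]\rangle\le 0$; this uses that the Nahm flow preserves the relevant conserved quantities together with the boundary data at $t=-\infty$; (iv) observe the identity $\mathrm{pr}_{12}\circ h_q=\psi$ at the level of sets and smooth maps; (v) conclude $h_q=(\mathrm{pr}_{12}|_{\overline{N}_{q,0}})^{-1}\circ\psi$ is a composition of diffeomorphisms, hence a diffeomorphism, also noting that $h_q$ is smooth directly since evaluation at $t=0$ is smooth on the moduli space and the permutation of coordinates is linear.

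The main obstacle I anticipate is step (iii): pinning down that $h_q$ genuinely maps into $\overline{N}_{q,0}$ rather than some other level set of the Casimirs, and that it hits the closure point (the stratum $\overline{N}_{q,0}\setminus N_{q,0}$) correctly. This requires understanding which solutions $(B_1,B_2,B_3)$ in $M_q$ have $\Phi(B_2(0),B_3(0),B_1(0))=0$, i.e. correspond to the singular stratum described in Remark \ref{rem:singularities}, versus the open dense part. One must verify that the boundary conditions of Nahm's equations force the image to lie precisely in the single level set $f^{-1}(f(\eta^-_{q,0}))\cap\{\Phi\le 0\}$: the conserved quantities of the Nahm flow — the quadratic Casimirs $\langle B_i,B_j\rangle$ and their combinations — are determined by their limiting values $\mathrm{Ad}_g\tau_i$ as $t\to-\infty$, which here are $(0,qe_1,0)$, giving exactly the values $q$ and $0$ after the cyclic relabelling; one should also confirm the sign condition $\Phi\le 0$ is automatic (or can be arranged by the orientation convention). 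Once the target is correctly identified, matching it with the orbit from Proposition \ref{prop:Nq0_reg} and invoking Theorem \ref{thm:Kronheimer}(2) is routine.
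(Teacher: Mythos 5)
Your proposal is correct and is essentially the paper's own argument: the paper proves this lemma in one line by composing Theorem \ref{thm:Kronheimer}(2) (the map $\psi$, evaluation $B_2(0)+iB_3(0)$) with the diffeomorphism $\mathrm{pr}_{12}|_{\overline{N}_{q,0}}$ from Proposition \ref{prop:Nq0_reg}, exactly as you do via $\mathrm{pr}_{12}\circ h_q=\psi$. Your step (iii) (conserved quantities of the Nahm flow pinning the Casimir level set and the sign of $\Phi$, up to matching the normalization of $q$) is precisely the detail the paper leaves implicit, and your sketch of it is sound.
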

\begin{proof}
This follows directly from Proposition \ref{prop:Nq0_reg} and Theorem \ref{thm:Kronheimer}
\end{proof}
Similarly, by combining Proposition \ref{prop:N00-nilp} and Theorem \ref{thm:Kr-nilp}, one gets
\begin{lemma}
The map $h_0:M_0\rightarrow N_{0,0}$ which maps $(B_1,B_2,B_3)$ to $(B_2(0),B_3(0),B_1(0))$ is a diffeomorphism.
\end{lemma}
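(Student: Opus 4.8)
The plan is to exhibit $h_0$ as a composite of two diffeomorphisms already at hand. First I would note that $M_0=M(0,0\cdot e_1,0)=M(0,0,0)$, so Theorem \ref{thm:Kr-nilp} applies verbatim: it gives a diffeomorphism from $M(0,0,0)$ onto the nilpotent orbit $\mathcal{N}\subset\mathfrak{sl}(2,\mathbb{C})$, and the map realizing it is the nilpotent analogue of $\psi$ in Theorem \ref{thm:Kronheimer}(2), namely $\psi\colon M(0,0,0)\to\mathcal{N}$, $(B_1,B_2,B_3)\mapsto B_2(0)+iB_3(0)$. On the other side, Proposition \ref{prop:N00-nilp} gives a diffeomorphism ${\rm pr}_{12}|_{N_{0,0}}\colon N_{0,0}\to\mathcal{N}$, $(a_1,a_2,a_3)\mapsto a_1+ia_2$. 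Once one knows that $h_0$ actually takes values in $N_{0,0}$, a one-line check gives ${\rm pr}_{12}\circ h_0=\psi$, since the image of $(B_2(0),B_3(0),B_1(0))$ under $(a_1,a_2,a_3)\mapsto a_1+ia_2$ is precisely $B_2(0)+iB_3(0)$; hence $h_0=({\rm pr}_{12}|_{N_{0,0}})^{-1}\circ\psi$ is a composite of diffeomorphisms, and in particular a diffeomorphism.

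The substantive step is therefore to verify that $h_0(M_0)\subseteq N_{0,0}$, i.e. that for every solution $(B_1,B_2,B_3)$ of \eqref{Eqt:Nahm-equation} with $B_i(t)\to 0$ as $t\to-\infty$ the triple $(B_2(0),B_3(0),B_1(0))$ meets the conditions defining $N_{0,0}$ in \eqref{Eqt:Nqr-def}. For the Casimir conditions I would differentiate along a solution: using $\dot B_i=-[B_j,B_k]$ and the cyclic invariance of $\langle B_1,[B_2,B_3]\rangle$ one finds $\frac{d}{dt}\langle B_i,B_i\rangle=-2\langle B_1,[B_2,B_3]\rangle$ with the right-hand side independent of $i$, so the differences $\langle B_i,B_i\rangle-\langle B_j,B_j\rangle$ are constant in $t$ and vanish at $t=0$ by the boundary condition; likewise $\frac{d}{dt}\langle B_i,B_j\rangle=0$ for $i\neq j$ because $\langle[X,Y],Y\rangle=0$, so $\langle B_i,B_j\rangle\equiv 0$. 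Evaluating at $t=0$ gives $f_1=\cdots=f_5=0$ on the triple. For the remaining open condition $\langle B_2(0),[B_3(0),B_1(0)]\rangle<0$: write $\phi(t):=\langle B_i,B_i\rangle(t)$, the common value. Uniqueness of solutions of \eqref{Eqt:Nahm-equation} together with the boundary condition forces $\phi(t)>0$ for every finite $t$ (otherwise the solution is identically zero, which is excluded since it would map under $\psi$ to $0\notin\mathcal{N}$); the $B_i(t)$ are then nonzero and pairwise orthogonal, hence span $\su(2)$, so $\Phi(t)=\langle B_1,[B_2,B_3]\rangle(t)$ is continuous and nowhere zero on $(-\infty,0]$, hence of constant sign; finally $\phi'=-2\Phi$ with $\phi(0)>0=\lim_{t\to-\infty}\phi(t)$ forces $\Phi<0$. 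Using cyclicity at $t=0$ this is exactly $\langle B_2(0),[B_3(0),B_1(0)]\rangle<0$, so $h_0(M_0)\subseteq N_{0,0}$ as required.

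I expect the genuine obstacle to lie in this last nondegeneracy/sign verification rather than in the elementary differentiation: showing that $B_1(0),B_2(0),B_3(0)$ always span $\su(2)$, and in particular excluding the trivial solution, is not a soft matter — it rests on the (stability/irreducibility) content of Kronheimer's Theorem \ref{thm:Kr-nilp} and on Xu's identification of leaves of $\su(2)^3_\star$ with adjoint orbits in Section 5 of \cite{Xu}, and it is precisely at this point that those results enter, rather than the ODE computation. If one is willing to take the well-definedness of $h_0$ as a map into $N_{0,0}$ from that identification, then the proof reduces to the single-line composition argument in the first paragraph.
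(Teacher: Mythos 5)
Your proposal is correct and is essentially the paper's own argument: the paper proves this lemma in one line by combining Proposition \ref{prop:N00-nilp} and Theorem \ref{thm:Kr-nilp}, i.e.\ exactly your factorization $h_0=({\rm pr}_{12}|_{N_{0,0}})^{-1}\circ\psi$ through the nilpotent orbit. The additional ODE computation you give (constancy of the Casimirs along Nahm flow and the sign argument forcing $\Phi<0$, hence $h_0(M_0)\subseteq N_{0,0}$) is sound and simply fills in a verification the paper leaves implicit in the cited results of Xu and Kronheimer.
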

Let $(\tilde{g},\tilde{I},\tilde{J},\tilde{K})$ and $(g,I,J,K)$ be the hyperkähler structures on $M_{q,0}$ and $\overline{N}_{q,0}$, respectively (when $q=0$, we take $N_{0,0}$ instead). Our result is, the diffeomorphisms $h_q$ also preserve the hyperkähler structures. 

\begin{thm}\label{thm:main-thm-3-relation}
For any $q\geq 0,$ the diffeomorphism $h_q$ is an isomorphism of hyperkähler structures.
\end{thm}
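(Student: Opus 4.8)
The plan is to verify that $h_q$ intertwines the Riemannian metrics and the three complex structures, working at the level of tangent spaces. Since $h_q$ is already known to be a diffeomorphism, it suffices to check, for each point $(B_1,B_2,B_3)\in M_q$ and each tangent vector $b=(b_0,b_1,b_2,b_3)$ described by Lemma \ref{lem:tangentspace}, that $(h_q)_*$ sends $b$ to the tangent vector $(h_q)_* b\in T_{h_q(B_1,B_2,B_3)}\overline{N}_{q,0}$ with $\tilde{g}(b,b') = g((h_q)_*b,(h_q)_*b')$ and that $(h_q)_* \circ \tilde{I} = I\circ (h_q)_*$, and likewise for $J,K$ (or, if necessary, for $\tilde{I},\tilde{J},\tilde{K}$ composed with a fixed $O\in\SO(3)$ coming from the cyclic permutation in the definition of $h_q$). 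The key observation is that $(h_q)_*b$ should be $(b_2(0),b_3(0),b_1(0))$ together with the extra degree of freedom coming from $b_0(0)$; concretely, one must first show that evaluation-at-$0$ of the solution $(b_0,b_1,b_2,b_3)$ of the linearized Nahm equation \eqref{Eqt:tangent-vector}, followed by the permutation, lands in the explicit tangent space to $\overline{N}_{q,0}$ recorded in Remark \ref{rem:singularities} and in Equations \eqref{Eqt:global-frame}.

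First I would set up the dictionary between the two descriptions of the tangent space. On the Kronheimer side a tangent vector is a whole solution curve $b(\cdot)$ on $(-\infty,0]$; on the Xu side it is a single vector in $\mathfrak{g}^3$, spanned by $V_0,V_1,V_2,V_3$. The bridge is that a solution of the linearized Nahm equation is determined by its value at $t=0$ once a gauge-fixing (the $b_0$-equation) is imposed, so evaluation at $0$ is an isomorphism onto the relevant subspace of $\mathfrak{g}^4$; composing with the cyclic permutation $(b_0,b_1,b_2,b_3)\mapsto(b_2(0),b_3(0),b_1(0))$ (the permutation matching $(B_1,B_2,B_3)\mapsto(B_2(0),B_3(0),B_1(0))$) and projecting out the $b_0$-direction gives $(h_q)_*$. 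I would then match the frames: compute $(h_q)_*$ of the four basis solutions of \eqref{Eqt:tangent-vector} corresponding to the four "directions", and check that they are precisely $V_0,V_1,V_2,V_3$ (up to sign/labeling). Comparing \eqref{Eqt:complex-regular} with \eqref{Eqt:leaf-I}--\eqref{Eqt:leaf-K} then shows the complex structures correspond, possibly after an overall $O\in\SO(3)$, which is exactly what an isomorphism of hyperkähler structures permits.

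The metric comparison is the step requiring real work. On $\overline{N}_{q,0}$ the metric is the algebraic expression $g(V_i,V_j)=-\delta_{ij}\Phi$ from \eqref{Eqt:Xu-metric}, while on $M_q$ it is the integral $\tilde g(b,c)=\sum_j\int_{-\infty}^0\langle b_j(s),c_j(s)\rangle\,ds$ from \eqref{Eqt:regular-metric}. So one must evaluate these integrals of products of solutions of the linearized Nahm equations and show they reproduce $-\Phi$ at the endpoint. The standard device is a Gronwall/derivative-of-energy argument: for solutions $b,c$ of \eqref{Eqt:tangent-vector} one checks that $\frac{d}{ds}\sum_j\langle b_j,c_j\rangle$ is itself a total derivative (or can be integrated in closed form against the background Nahm flow), so the integral telescopes to a boundary term at $s=0$ minus a boundary term at $s=-\infty$, the latter vanishing by the exponential-decay norm; one then identifies the resulting $s=0$ expression with $g$. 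Equivalently, since both the metric and all three complex structures have now been shown to agree on a common global frame, and the frame is orthogonal on both sides, it is enough to check a single diagonal entry, i.e. that $\int_{-\infty}^0\sum_j|b_j^{(0)}(s)|^2\,ds = -\Phi(B_2(0),B_3(0),B_1(0)) = \|V_0\|^2$ for the distinguished solution $b^{(0)}$ mapping to $V_0$; the other entries then follow by the hyperkähler compatibility already established (an isometry on one vector of a $g$-orthogonal, $I,J,K$-adapted frame that also intertwines $I,J,K$ is an isometry).

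The main obstacle I anticipate is precisely the identification of the integral \eqref{Eqt:regular-metric} with the pointwise value $-\Phi$: one needs the right conserved (or explicitly integrable) quantity along the Nahm flow for the linearized solutions, and one must handle the $q=0$ case separately because there the boundary condition at $-\infty$ is $B_i\to 0$ rather than convergence to a regular triple, so the decay estimates and the function space (and hence the integrability of the energy density) are of a different nature — this is the reason Kronheimer's \cite{Kronheimer1990InstantonsAT} and Kovalev's \cite{Kovalev1996NahmsEA} analyses are treated apart, and the same bifurcation will appear here. A secondary subtlety is bookkeeping the cyclic permutation and the resulting fixed $O\in\SO(3)$ so that the complex structures \eqref{Eqt:complex-regular} line up with \eqref{Eqt:leaf-I}--\eqref{Eqt:leaf-K} on the nose; this is routine but must be done carefully to land in $\SO(3)$ rather than $\operatorname{O}(3)\setminus\SO(3)$, which would only give an anti-isomorphism.
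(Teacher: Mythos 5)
Your proposal is correct and follows essentially the same route as the paper's proof: push forward a distinguished frame of solutions of the linearized Nahm equations under evaluation at $t=0$ plus the cyclic permutation, identify it (up to an explicit $O\in\SO(3)$) with the frame $\{V_0,V_1,V_2,V_3\}$ and the complex structures \eqref{Eqt:leaf-I}--\eqref{Eqt:leaf-K}, and compute the metric by recognizing the energy density as the total derivative $-\frac{d}{dt}\langle B_1,[B_2,B_3]\rangle$, so the integral \eqref{Eqt:regular-metric} telescopes to $-\Phi$ at $t=0$. The paper implements exactly this with the bracket-built frame \eqref{Eqt:global-frame-2} (checking all diagonal entries directly rather than reducing to one via Hermitian symmetry, which is a harmless streamlining on your part).
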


\begin{proof}
The key point is to find a frame for $M_q$ as in the case of Equations \eqref{Eqt:global-frame}. For any $(B_1(t),B_2(t),B_3(t)) \in M_q,$ we observe that, the following paths in $\su(2)^4$ satisfies all the conditions in Lemma \ref{lem:tangentspace}, and thus defines tangent vectors of $M_q$ at $(B_1,B_2,B_3).$
\begin{equation}\label{Eqt:global-frame-2}
\begin{aligned}
\tilde{V}_0|_{(B_1,B_2,B_3)} &= (0, [B_2,B_3], [B_3,B_1], [B_1,B_2]),\\ \tilde{V}_1|_{(B_1,B_2,B_3)} &= ([B_3,B_2], 0, [B_2,B_1], [B_3,B_1]),\\
\tilde{V}_2|_{(B_1,B_2,B_3)} &= ([B_1,B_3], [B_1,B_2], 0, [B_3,B_2]), \\
\tilde{V}_3|_{(B_1,B_2,B_3)} &= ([B_2,B_1], [B_1,B_3], [B_2,B_3], 0).
\end{aligned}    
\end{equation}
Indeed, $\{\tilde{V}_i\}$ are related to $\{V_i\}$ on $N_{q,0}$ via $h_q.$ We compute that,
\begin{equation}\label{Eqt:Fstar}
F_*{\tilde{V}_0} = -\tilde{V}_0, \hspace{.5em} F_*\tilde{V}_1 = -\tilde{V}_3, \hspace{.5em} F_*\tilde{V}_2 = \tilde{V}_1, \hspace{.5em} F_*\tilde{V}_3 = \tilde{V}_2.
\end{equation}
In particular, $\{\tilde{V}_i\}$ forms a global frame over the open dense subset $h_q^{-1}(\{\Phi\neq 0\})$ of $M_q,$  

Now we compute the Riemmanian metric $\tilde{g}$ over $\{\tilde{V}_i\}$. Since
\begin{equation*}
\begin{aligned}
&\frac{d}{dt}\langle B_1, [B_2, B_3]\rangle \\
=& \langle\dot{B_1},[B_2, B_3]\rangle +\langle\dot{B_2}, [B_3, B_1]\rangle + \langle\dot{B_3}, [B_1, B_2]\rangle \\
=& -\langle[B_2, B_3],[B_2, 
 B_3]\rangle - \langle[B_3, B_1], [B_3, B_1]\rangle - \langle[B_1, B_2], [B_1, B_2]\rangle,
\end{aligned}    
\end{equation*}
it follows from Equation \eqref{Eqt:regular-metric} that
\begin{equation*}
\begin{aligned}
&\tilde{g}(\tilde{V}_i,\tilde{V}_i)\\
=& \int_{-\infty}^0 \big(\langle[B_2, B_3],[B_2, B_3]\rangle +\langle[B_3, B_1], [B_3, B_1]\rangle + \langle[B_1, B_2], [B_1, B_2]\rangle \big)dt \\
=& -\int_{-\infty}^0 \frac{d}{dt}\langle B_1, [B_2, B_3]\rangle\\
=& -\langle B_1(0), [B_2(0), B_3(0)]\rangle,
\end{aligned}    
\end{equation*}
for any $i=0,1,2,3$. Again by Equation \eqref{Eqt:regular-metric}, we have $\tilde{g}(\tilde{V}_i,\tilde{V}_j)=0$, for any $i\neq j$. This shows that $h_q^*g_{q,0} = \tilde{g}_q$ holds on an dense subset, and thus holds everywhere on $M_q.$

As for the complex structures, by Equations \eqref{Eqt:complex-regular} and \eqref{Eqt:Fstar}, one verifies that
\begin{equation}\label{Eqt:complex-relation}
F_*\circ \tilde{I} = K \circ F_*, \indent F_* \circ \tilde{J} = -I \circ F_*,\indent F_*\circ \tilde{K} = -J \circ F_*.
\end{equation}
Therefore, $h_q$ is an isomorphism of hyperkähler structures, with respect to the matrix
\begin{equation*}
O=
\begin{bmatrix}
0 & 0 & 1\\
-1 & 0 & 0\\
0 & -1 & 0
\end{bmatrix}.
\end{equation*}
\end{proof}




\begin{remark}
In the previous proof, we avoid checking the identities on $h_q^{-1}(\Phi^{-1}(0))$. Indeed, one can still achieve that by solving ODEs from Equations \eqref{Eqt:tangent-vector}. For example, let $(e_1,e_2,e_3)$ be any positively-oriented orthornormal basis in $\su(2),$ and consider the constant solution $(B_1,B_2,B_3)\equiv (0, re_1, 0),$ then the tangent vectors are described by
\begin{equation*}
\begin{aligned}
   b_0(t)&=ke^{rt}e_2 + le^{rt}e_3,\\
   b_1(t)&=-le^{rt}e_2 + ke^{rt}e_3,\\
   b_2(t)&=me^{rt}e_2 + ne^{rt}e_3,\\
   b_3(t)&=-ne^{rt}e_2 + me^{rt}e_3,
\end{aligned}
\end{equation*}
for all $k,l,m,n \in \mathbb{R}$. Then one can relate above tangent vectors with those defined in Remark \ref{rem:singularities}, via the map $h_q.$ 

\end{remark}

We conclude this section by the following theorem.

\begin{thm}
Let $N_{q,r}$ be any leaf of the hyper-Lie Poisson structure $\su(2)^3_{\star}.$ For any triple $(\tau_1,\tau_2,\tau_3)\in \su(2)^3$, which is either regular or $(0,0,0),$ and $M(\tau_1,\tau_2,\tau_3)$ be the corresponding manifold of Kronheimer.
\begin{itemize}
\item[(1)] $M(0,0,0)$ is isomorphic to the type-0 leaf $N_{0,0}$ as hyperkähler manifolds.
\item[(2)] For any regular triple $(\tau_1,\tau_2,\tau_3),$ there exists $q>0$ such that $M(\tau_1,\tau_2,\tau_3)$ is isomorphic to the extended type-1 leaf $\overline{N}_{q,0}$ as hyperkähler manifolds.
\item[(3)] Each type-2 leaf $N_{q,r}$ is not a hyperkähler submanifold of $M(\tau_1,\tau_2,\tau_3).$ 
\end{itemize} 
\end{thm}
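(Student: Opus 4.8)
The strategy is to assemble the three parts from results already established, with the only genuinely new content residing in part (3). For part (1), I would invoke Theorem~\ref{thm:main-thm-3-relation} with $q=0$ directly: the map $h_0\colon M_0 = M(0,0,0)\to N_{0,0}$ is a diffeomorphism and an isomorphism of hyperkähler structures (with respect to the rotation $O$ exhibited in that proof). For part (2), given a regular triple $(\tau_1,\tau_2,\tau_3)$, I would first apply Lemma~\ref{lem: transformO} (together with the fact that a regular triple spans a line, so $\mathrm{span}\{\tau_1,\tau_2,\tau_3\}$ is $1$-dimensional and $\Phi(\tau_1,\tau_2,\tau_3)=0$) to produce $O\in\SO(3)$ with $(\tau_1,\tau_2,\tau_3)O = (0, q e_1, 0)$ for some $q>0$; then the isomorphism $\tilde\sigma_O\colon M(\tau_1,\tau_2,\tau_3)\to M_q$ of the previous subsection, composed with $h_q\colon M_q\to \overline N_{q,0}$ from Theorem~\ref{thm:main-thm-3-relation}, gives the desired hyperkähler isomorphism. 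I should note in passing that we need $q>0$: a regular triple spans a genuine line, so the limiting data is nonzero, which is exactly why we land in the \emph{regular} orbit rather than the nilpotent one.

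For part (3), the point is that Kronheimer's list of manifolds $M(\tau_1,\tau_2,\tau_3)$ (with $(\tau_1,\tau_2,\tau_3)$ regular or zero) exhausts, up to hyperkähler isomorphism, only two hyperkähler structures on the nilpotent-orbit-diffeomorphic manifold: $M_0$ and $M_q$ for $q>0$ (the latter being the extended regular orbits $\overline N_{q,0}$, and $M_0$ the nilpotent orbit $N_{0,0}$). So it suffices to show that a type-2 leaf $N_{q,r}$ with $r>0$ cannot be a hyperkähler submanifold of any of these. The cleanest route is via curvature: by Theorem~\ref{thm:firstmainresult}, $\kappa_{q,r}$ on $N_{q,r}$ is unbounded, while the sectional curvature of $M_0$ vanishes identically (it is isomorphic to $N_{0,0}$, flat by the first bullet of Theorem~\ref{thm:firstmainresult}) and the sectional curvature of $M_q\cong\overline N_{q,0}$ is bounded by $12q^{-1/2}$ (Proposition~\ref{prop:type1-bounded}, extended to the closure). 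Since a totally geodesic submanifold inherits sectional curvatures as restrictions of the ambient ones, an unbounded-curvature manifold cannot be a hyperkähler — hence totally geodesic, being complex — submanifold of a bounded-curvature manifold; more simply, $N_{q,r}$ is not even locally isometric to an open subset of $M(\tau_1,\tau_2,\tau_3)$. This rules out (3).

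The step I expect to require the most care is making precise what ``hyperkähler submanifold'' means in part (3) and why curvature boundedness obstructs it. A hyperkähler (i.e. complex with respect to all three structures) submanifold of a hyperkähler manifold is Kähler, hence minimal, but not automatically totally geodesic; so a direct appeal to ``sectional curvature restricts'' is too glib. The honest argument is dimensional: both $M(\tau_1,\tau_2,\tau_3)$ and $N_{q,r}$ are $4$-dimensional, so a hyperkähler submanifold would be an open subset, forcing a local isometry, and then the curvature invariant $\sup\kappa$ would have to match — contradiction, since one side is $+\infty$ and the other is finite. I would phrase part (3) accordingly: because of equal dimensions, being a hyperkähler submanifold would make $N_{q,r}$ (hyperkähler-)isomorphic to an open subset of $M(\tau_1,\tau_2,\tau_3)$, which is impossible by the curvature computations of Section~4. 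A minor auxiliary point to dispatch is that Proposition~\ref{prop:type1-bounded}'s bound, proved on $N_{q,0}$, persists on the closure $\overline N_{q,0}$; this follows from the explicit flat-looking form of the extended metric in Remark~\ref{rem:singularities} (the added boundary points carry the standard quaternionic structure and contribute no curvature blow-up), or simply by continuity of $\kappa$ where it is defined plus the fact that the bound $12q^{-1/2}$ is uniform.
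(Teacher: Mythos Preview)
Your proposal is correct and follows essentially the same approach as the paper: parts (1) and (2) are assembled from Theorem~\ref{thm:main-thm-3-relation} (together with the $\SO(3)$-reduction of a regular triple to $(0,qe_1,0)$), and part (3) is handled by the curvature dichotomy of Theorem~\ref{thm:firstmainresult}. Your treatment of (3) is in fact more scrupulous than the paper's one-line version --- you make explicit the equal-dimension argument (a hyperkähler submanifold of the same dimension is an open subset, forcing a local isometry) and you address extension of the curvature bound to $\overline N_{q,0}$ --- but the underlying idea is identical.
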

\begin{proof}
The statements (1) and (2) follow from Proposition \ref{prop:N00-nilp}, \ref{prop:Nq0_reg}, and Theorem \ref{thm:main-thm-3-relation}. Since the sectional curvature of any type-2 leaf is unbounded, by Proposition \ref{prop:unbounded}, while on any $M(\tau_1,\tau_2,\tau_3)$, it is bounded, one can obtain the assertion of (3).
\end{proof}


\vspace{20pt}  

\noindent \textbf{Acknowledgments.} The authors would like to thank Ping Xu and Mathieu Sti\'{e}non  for fruitful discussions and useful comments.

\begin{bibdiv}
\begin{biblist}

\bib{Atiyah-Hitchin}{book}{
   author={Atiyah, M.},
   author={Hitchin, N.},
   title={The geometry and dynamics of magnetic monopoles},
   series={M. B. Porter Lectures},
   publisher={Princeton University Press, Princeton, NJ},
   date={1988},
   pages={viii+134},
   isbn={0-691-08480-7},
   review={\MR{934202}},
}

\bib{Biquard1996}{article}{
   author={Biquard, O.},
   title={Sur les \'{e}quations de Nahm et la structure de Poisson des alg\`ebres
   de Lie semi-simples complexes},
   language={French},
   journal={Math. Ann.},
   volume={304},
   date={1996},
   number={2},
   pages={253--276},
   issn={0025-5831},
   review={\MR{1371766}},
}

\bib{Biquard1997}{article}{
   author={Biquard, O.},
   author={Gauduchon, P.},
   title={Hyper-K\"{a}hler metrics on cotangent bundles of Hermitian symmetric
   spaces},
   conference={
      title={Geometry and physics},
      address={Aarhus},
      date={1995},
   },
   book={
      series={Lecture Notes in Pure and Appl. Math.},
      volume={184},
      publisher={Dekker, New York},
   },
   date={1997},
   pages={287--298},
   review={\MR{1423175}},
}

\bib{Biquard1998}{article}{
   author={Biquard, O.},
   author={Gauduchon, P.},
   title={G\'{e}om\'{e}trie hyperk\"{a}hl\'{e}rienne des espaces hermitiens sym\'{e}triques
   complexifi\'{e}s},
   language={French},
   conference={
      title={S\'{e}minaire de Th\'{e}orie Spectrale et G\'{e}om\'{e}trie, Vol. 16, Ann\'{e}e
      1997--1998},
   },
   book={
      series={S\'{e}min. Th\'{e}or. Spectr. G\'{e}om.},
      volume={16},
      publisher={Univ. Grenoble I, Saint-Martin-d'H\`eres},
   },
   date={[1998]},
   pages={127--173},
   review={\MR{1666451}},
}

\bib{Calabi1979}{article}{
   author={Calabi, E.},
   title={M\'{e}triques k\"{a}hl\'{e}riennes et fibr\'{e}s holomorphes},
   language={French},
   journal={Ann. Sci. \'{E}cole Norm. Sup. (4)},
   volume={12},
   date={1979},
   number={2},
   pages={269--294},
   issn={0012-9593},
   review={\MR{543218}},
}

\bib{Feix2001}{article}{
   author={Feix, B.},
   title={Hyperk\"{a}hler metrics on cotangent bundles},
   journal={J. Reine Angew. Math.},
   volume={532},
   date={2001},
   pages={33--46},
   issn={0075-4102},
   review={\MR{1817502}},
}

\bib{Hitchin1987}{article}{
   author={Hitchin, N. },
   title={The self-duality equations on a Riemann surface},
   journal={Proc. London Math. Soc. (3)},
   volume={55},
   date={1987},
   number={1},
   pages={59--126},
   issn={0024-6115},
   review={\MR{887284}},
}

\bib{Kaledin}{article}{
   author={Kaledin, D.},
   title={A canonical hyperk\"{a}hler metric on the total space of a cotangent
   bundle},
   conference={
      title={Quaternionic structures in mathematics and physics},
      address={Rome},
      date={1999},
   },
   book={
      publisher={Univ. Studi Roma ``La Sapienza'', Rome},
   },
   date={1999},
   pages={195--230},
   review={\MR{1848662}},
}

\bib{Kovalev1996NahmsEA}{article}{
   author={Kovalev, A. G.},
   title={Nahm's equations and complex adjoint orbits},
   journal={Quart. J. Math. Oxford Ser. (2)},
   volume={47},
   date={1996},
   number={185},
   pages={41--58},
   issn={0033-5606},
   review={\MR{1380949}},
}

\bib{Kronheimer-ALE}{article}{
   author={Kronheimer, P. B.},
   title={The construction of ALE spaces as hyper-K\"{a}hler quotients},
   journal={J. Differential Geom.},
   volume={29},
   date={1989},
   number={3},
   pages={665--683},
   issn={0022-040X},
   review={\MR{992334}},
}

\bib{Kronheimer-JLMS}{article}{
   author={Kronheimer, P. B.},
   title={A hyper-K\"{a}hlerian structure on coadjoint orbits of a semisimple
   complex group},
   journal={J. London Math. Soc. (2)},
   volume={42},
   date={1990},
   number={2},
   pages={193--208},
   issn={0024-6107},
   review={\MR{1083440}},
}

\bib{Kronheimer1990InstantonsAT}{article}{
   author={Kronheimer, P. B.},
   title={Instantons and the geometry of the nilpotent variety},
   journal={J. Differential Geom.},
   volume={32},
   date={1990},
   number={2},
   pages={473--490},
   issn={0022-040X},
}

\bib{Maciocia}{article}{
   author={Maciocia, A.},
   title={Metrics on the moduli spaces of instantons over Euclidean
   $4$-space},
   journal={Comm. Math. Phys.},
   volume={135},
   date={1991},
   number={3},
   pages={467--482},
   issn={0010-3616},
   review={\MR{1091573}},
}

\bib{Mayrand}{article}{
   author={Mayrand, M.},
   title={Hyperk\"{a}hler metrics near Lagrangian submanifolds and symplectic
   groupoids},
   journal={J. Reine Angew. Math.},
   volume={782},
   date={2022},
   pages={197--218},
   issn={0075-4102},
   review={\MR{4360006}},
}

\bib{Nakajima}{article}{
   author={Nakajima, H.},
   title={Instantons on ALE spaces, quiver varieties, and Kac-Moody
   algebras},
   journal={Duke Math. J.},
   volume={76},
   date={1994},
   number={2},
   pages={365--416},
   issn={0012-7094},
   review={\MR{1302318}},
   }

    \bib{Vergne}{article}{
   author={Vergne, M.},
   title={Instantons et correspondance de Kostant-Sekiguchi},
   language={French, with English and French summaries},
   journal={C. R. Acad. Sci. Paris S\'{e}r. I Math.},
   volume={320},
   date={1995},
   number={8},
   pages={901--906},
   issn={0764-4442},
   review={\MR{1328708}},
}

\bib{Xu}{article}{
   author={Xu, P.},
   title={Hyper-Lie Poisson structures},
   language={English, with English and French summaries},
   journal={Ann. Sci. \'{E}cole Norm. Sup. (4)},
   volume={30},
   date={1997},
   number={3},
   pages={279--302},
   issn={0012-9593},
   review={\MR{1443488}},
}

\bib{YangZheng}{article}{
   author={Yang, B.},
   author={Zheng, F.},
   title={Examples of complete K\"{a}hler metrics with nonnegative holomorphic
   sectional curvature},
   journal={J. Geom. Anal.},
   volume={33},
   date={2023},
   number={2},
   pages={Paper No. 47, 32},
   issn={1050-6926},
   review={\MR{4523514}},
}

\bib{Yau-CPAM}{article}{
   author={Yau, S.},
   title={On the Ricci curvature of a compact K\"{a}hler manifold and the
   complex Monge-Amp\`ere equation. I},
   journal={Comm. Pure Appl. Math.},
   volume={31},
   date={1978},
   number={3},
   pages={339--411},
   issn={0010-3640},
   review={\MR{480350}},
}

\bib{kahlerbook}{book}{place={Cambridge}, series={London Mathematical Society Student Texts}, title={Lectures on Kähler Geometry}, publisher={Cambridge University Press}, author={Moroianu, A.}, year={2007}, collection={London Mathematical Society Student Texts}}

\bib{Gteman2009PseudoHyperkhlerGA}{article}{
  title={Pseudo-Hyperk{\"a}hler Geometry and Generalized K{\"a}hler Geometry},
  author={M. G{\"o}teman and U. Lindstr{\"o}m},
  journal={Letters in Mathematical Physics},
  year={2009},
  volume={95},
  pages={211-222}
}

\end{biblist}
\end{bibdiv}

\end{document}